\pgfplotsset{compat=1.17}
\newcommand\xm{%
  X\kern-.0em%
  --%
   \hbox{M}\kern+.16em%
   \raise-0.65ex\hbox{--}\kern-.7em%
  \raise0.34ex\hbox{es}%
  \hbox{{h}}%
\@}
\newtheorem{lemma}{Lemma}
\newtheorem{theorem}[lemma]{Theorem}
\theoremstyle{remark}
\newtheorem{remark}{Remark}
\def \IR {\mathbb{R}}
\def \Th {{\mathcal T}_h}
\def \d {{\rm d}}
\def\dx{\,\d x }
\def\dS{\,\d S }
\def\B{\mathcal{B}}
\def\WIi{W^{1,\infty}}
\def\WIIi{W^{2,\infty}}
\def\tu{\tilde{u}}
\def\opt{\mathrm{opt}}
\def\gradn{\nabla_{\!\!\perp}}
\def\gradt{\nabla_{\!\!{\scriptscriptstyle\parallel}}}
\def\tD{\widetilde{D}}
\def\width{\bar{h}}
\newcommand{\J}{J}
\newcommand{\Jmin}{J_{\text{min}}}
\newcommand{\dpartial}[2]{\frac{\partial #1}{\partial #2}}
\pgfplotsset{compat=1.18}
\title{The tempered finite element method}
\author[1]{Antoine {Quiriny}}
\author[2]{Václav {Kučera}}
\author[1]{Jonathan {Lambrechts}}
\author[1]{Nicolas {Moës}}
\author[1]{Jean-François {Remacle}}
\affil[1]{Institute of Mechanics, Materials and Civil Engineering (iMMC), Avenue Georges Lemaître 4, 1348 Louvain-la-Neuve, Belgium}
\affil[2]{Faculty of Mathematics and Physics, Charles University, Sokolovská 83, 186 75 Praha 8, Czech Republic}
\date{November 2024}    
\begin{document}
\maketitle
\noindent

\section*{Abstract}
In this paper, we propose a new approach -- the Tempered Finite Element Method (TFEM) -- that extends the Finite Element Method (FEM) to classes of meshes that include zero-measure or nearly degenerate elements for which standard FEM approaches do not allow convergence. First, we review why the maximum angle condition \cite{babuvska1976angle} is not necessary for FEM convergence and what are the real limitations in terms of meshes. Next, we propose a simple modification of the classical FEM for elliptic problems that provably allows convergence for a wider class of meshes including bands of caps that cause locking of the solution in standard FEM formulations. The proposed method is trivial to implement in an existing FEM code and can be theoretically analyzed. We prove that in the case of exactly zero-measure elements it corresponds to mortaring. We show numerically and theoretically that what we propose is functional and sound. The remainder of the paper is devoted to extensions of the TFEM method to linear elasticity, mortaring of non-conforming meshes,  high-order elements, and advection.


\section{Introduction}

Since the early 1970s, the finite element method has been used extensively in  computational physics and engineering.
The fact that there exists a structure of proof for finite element algorithms \cite{ern2004theory} supplemented by the fact that finite elements are able to solve non-linear problems \cite{wriggers2008nonlinear} on possibly complex geometries using unstructured meshes \cite{geuzaine2009gmsh}
make finite elements the method that prevails in most branches of manufacturing industry -- aerospace, automotive, shipbuilding, electrical machines, ...  

Engineering designs are encapsulated in CAD systems. By extension, the word CAD also designates the geometry of the object/system to be analyzed. The analysis process begins with the CAD geometry. Then, the currently prominent method of analysis, finite elements, requires a mesh, which is an alternative (discrete) representation of a geometry. 
Finite elements (unlike certain classes of finite volumes) are relatively robust when it comes to mesh quality. In the previous sentence, we have deliberately used the word \emph{relatively}. Finite elements are resilient to the poor quality of some meshes, but not all. There exist some specific mesh configurations that make the finite element method to become \emph{brittle} i.e. that forbid finite element convergence. In a classical paper \cite{babuvska1976angle}, Babu{\v{s}}ka and Aziz claim that \emph{what is essential is the fact that no angle is close to $180^o$}. Once again, words are everything: Babu{\v{s}}ka and Aziz use the  word \emph{essential} but not the word \emph{necessary}, which has a more precise mathematical definition. Much later, Hannukainen et. al. \cite{hannukainen2012maximum} showed through a very simple geometrical construction that the maximum angle condition is not necessary. In \cite{hannukainen2012maximum}, the authors do not specify whether there are mesh configurations that render the finite element method inoperative. In \cite{babuvska1976angle}, the authors already show that degenerate triangles with two angles close to $90^o$ -- which we'll call needles in the following -- don't cause any problems. Ku{\v{c}}era shows in \cite{kuvcera2016necessary} that a triangle with an angle close to $180^o$ -- which we'll call a cap in the following -- poses no problem \emph{when isolated}. We  can explain using basic linear algebra why isolated caps (or needles) causes no problem in the finite element method and why
only specific patterns should be avoided. We start by studying the stiffness matrix related to the Laplace operator on
a triangle (see Figure \ref{fig:cap}):
$$K_{ij} = \int_{T} \nabla \phi_i \cdot \nabla \phi_j \;
dx~~~,~~~i,j=1,2,3.$$
Assume three scalar values $u_i$, $i=1,2,3$, associated to nodes located at 
$x_i$, $i=1,2,3$. The stiffness matrix, $[K]$, allows one to write the quadratic expression
of the energy:
\begin{equation}
f(u_1,u_2,u_3) = \frac12 \sum_{i=1}^3\sum_{j=1}^3K_{ij} u_i u_j.
\label{eq:quadf}
\end{equation}
\begin{figure}[h!]
\begin{center}
        \begin{tikzpicture}
            \draw (0,0) node {$\bullet$} ;
            \draw (5,1.75) node {$\bullet$} ;
            \draw (8,0) node {$\bullet$} ;
            \draw (5,0) node {$\times$} ;
            \draw[] (0,0)  node[left]{${x}_1$} --  (5,1.75) node[above]{${x}_3$} --  (8,0)  node[right]{${x}_2$} -- (0,0);
            \draw[->] (-0.1,0.75)--(0.4,0.75) node[right]{$x$};
            \draw[->] (-0.1,0.75)--(-0.1,1.25) node[above]{$y$};
            \draw[dashed] (5,1.75)--(5,0.) ;
            \draw[<->,color=gray] (0,-0.3)--(5,-0.3) ;
            \draw (2.5,-0.3)node[below]{$l_1$} ;
            \draw[<->,color=gray] (5,-0.3)--(8,-0.3) ;
            \draw (6.5,-0.3)node[below]{$l_2$} ;
            \draw[<->,color=gray] (8.7,0.)--(8.7,1.75) ;
            \draw (8.7,0.875)node[right]{$h$} ;
            \draw[] (1.5,0) arc (0:9:1.5) node[right]{$\theta_1$} ;
            \draw[] (1.5,0) arc (0:19:1.5) ;
            \draw[] (6.5,0) arc (180:165:1.5) node[left]{$\theta_2$} ;
            \draw[] (6.5,0) arc (180:150:1.5);
            \draw[] (5,1.25) arc (-90:-30:0.5);
            \draw[] (5,1.25) arc (-90:-160:0.5);
            \draw[] (4.5, 1.25) node{$\theta_3$};
        \end{tikzpicture}
    \end{center}
    \caption{A triangle $T$. The angle 
    $\theta_i$ is associated to the vertex $x_i$.\label{fig:cap}}
\end{figure}
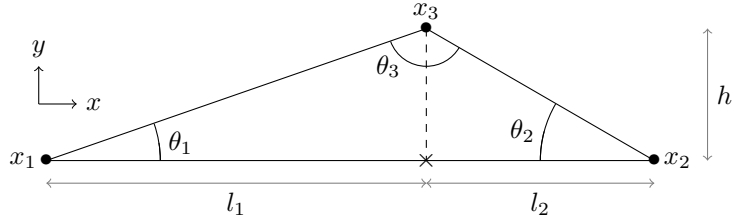
Without loss of generality, we  assume  that $\theta_3 \geq \theta_2 \geq \theta_1$. 
Edge $({x}_1, {x}_2)$ is thus the longest edge of T (see
Figure \ref{fig:cap}) with $\|{x}_2 - {x}_1\| = l_1+l_2$. 
Defining the element flatness $f$ and a symmetry parameter $s$, 
\begin{equation}
    f = \frac{h} {l_1+l_2}, \quad s = \frac{l_1}{l_1+l_2}, \quad f \in\left[0,
      {\sqrt{3}/2}\right], \quad s \in [ 0, 1 ].
    \label{eq:f}
\end{equation}
This local matrix has three eigenvalues $\lambda_1, \lambda_2$ and $\lambda_3$ and three corresponding eigenvectors ${\bf v}_1, {\bf v}_2$ and ${\bf v}_3$ that relate to different modes of the solution. The computation of the eigenvalues, eigenvectors and energies associated are described in appendix \ref{apx:stiff}. The first mode is the rigid
 mode with zero energy.
We are interested in the limit case  
$f \rightarrow
0$ for which the element reaches zero measure. 
It is possible to show that as $f \rightarrow 0$, we have
\begin{equation}{\lambda_2 \sim \frac{3}{2} \frac{f}{1-s+s^2}
    \quad \text{and} \quad \lambda_3 \sim  2 \frac{ 1 - s + s^2}{f}} 
\label{eq:eig}
  \end{equation}
As $f
  \rightarrow 0$, the energy of the second mode ${\bf v}_2$ goes to zero linearly in
  $f$ and the energy of the third mode ${\bf v}_3$ goes to infinity as $1/f$.
The energy and therefore the gradients of $u(x,y)$ tend towards 
infinity if the ${\bf v}_3$ mode corresponding to $\lambda_3$ is active. 
It is possible to show that $$\lim_{f\rightarrow 0} {\bf v}_2 = (1-s, s, -1)$$ and ${\bf v}_2$ is always orthogonal to ${\bf v}_3$.
Therefore, any $u$ with finite energy can be written as
linear combinations of ${\bf v}_1$ and ${\bf v}_2$:
  \begin{equation}u_3 = (1-s) u_1 + s u_2.\label{eq:lin}\end{equation}
 In a degenerated element, values of ${u}_1$ and ${u}_2$ can be chosen arbitrarily but $u_3$ {\bf must be the linear 
interpolation between $u_1$ and $u_2$ on edge ${ x}_1 {x}_2$.}

In the limit $f\rightarrow 0$, any $u$ field that does not satisfy \eqref{eq:lin} has an infinite energy. When solving for the global finite element system $Ax=b$, there exists in the limit $f\rightarrow 0$ an infinite eigenvalue in $A$ corresponding to an eigenmode ${\bf x}_{\infty}$. 
Since the solver minimizes the energy, it will choose a solution ${\bf x}$ that does not contain ${\bf x}_{\infty}$. The degenerate cap will simply act as a mortar between its three neighbors, imposing  condition \eqref{eq:lin}. 

Now what if we consider a band of caps like the one in Figure \ref{fig:2D_band}.
At the $\bar{h} \rightarrow 0$  limit, the solution $u_2$ at point $x_2$ is constrained by the solution at points $x_1$ and $x_3$. The solution at $x_3$ is itself constrained by the solution at $x_2$ and $x_4$, and so on. So the $u$ solution is forced to be linear along the band. This locking phenomenon leads to the fact that $h$ is no longer the mesh size in the band but $L$ is.
And, if $L$ is held fixed, convergence is lost. By making such an argument more rigorous, Ku{\v{c}}era \cite{kuvcera2016necessary} was able to demonstrate that for a band of caps of fixed length $L$, classical finite elements have $O(h)$-convergence if and only if $\bar{h} \ge C h^2$ is satisfied, where $C$ is a constant independent of the mesh (cf. also Remark \ref{rem:FEM}). Finite elements have in fact a greater tolerance to mesh ``quality" than expected.

Note that for a needle we have either $s=0$ or $s=1$. The condition for finite energy forces both nodes of the collapsed edge to have the same value.
In a needle, the acceptable mode ${\bf v}_2$ couples only two nodes, and a band of needles causes no concern.

The fact that bands of needles are not a problem for finite element convergence is in agreement with the historical development of the method. The first mathematical analysis of the finite element method was performed independently by Zl\'{a}mal and \v{Z}en\'{i}\v{s}ek in the papers \cite{Zlamal} and \cite{Zenisek}. In these papers optimal convergence is proved under the so-called \emph{minimum angle condition} which assumes that all angles in the mesh are uniformly bounded away from zero. Such a condition precludes the presence of needles in the mesh. However, these results were later improved independently by several authors to the so-called \emph{maximum angle condition}, cf. e.g. \cite{babuvska1976angle}. This condition assumes that the angles in the mesh are uniformly bounded away from $180^o$, hence needles are allowed but not caps. The most general form of the maximum angle condition can be formulated using the \emph{circumradius} of the triangles in the mesh \cite{kobayashi2015circumradius}. The maximum angle condition can be shown to be optimal when estimating Lagrange interpolation, \cite{Kucera_circumradius}. However, for finite elements it is known that the circumradius condition is not necessary for convergence, cf. \cite{hannukainen2012maximum}. The special case of the band of caps which is relevant to the present paper is thoroughly analyzed in \cite{kuvcera2016necessary}. We note that we have only discussed the situation in 2D. Much less is known about the 3D case, where even a necessary and sufficient condition for Lagrange interpolation is not known. There are many partial results concerning such quantities as vertex and dihedral angles, projected circumradii, etc.

In the paper \cite{kuvcera2016necessary}, Ku{\v{c}}era showed that very strange triangulations allows finite element convergence, stating that \emph{such meshes perhaps do not have any value from the practical point
  of view}.
Recently, we developed a new interface tracking method, called \xm\, \cite{degrooff2024simulation,moes2023extreme,quiriny2024x}, which takes advantage of meshes with zero or close to zero-measure elements. The principle is simple: track the interface by mesh relaying. The interface is handed over from layers of nodes to new layers of nodes across elements which can be rather thin in the direction of the front propagation. These elements may even possibly form bands of caps. 

This is the motivation for this paper: 
be able from an \xm\  perspective
to compute easily with bands of caps.
This requires a more robust finite element approach that we have named 
the \emph{tempered finite element method}.
 This name is inspired by an analogy with metallurgy. The problem with finite elements on a band of caps is that the numerical solution undergoes a locking phenomenon which prevents proper functioning of the method and convergence. In a material-science analogy, after discretization the `material' becomes extremely hard on a band of caps and the numerical solution can no longer conform to the exact solution. The method we propose  alleviates this by locally `softening' the material on the band. In Poisson's problem this will correspond to locally decreasing the diffusion parameter in a mesh-dependent way. This allows the numerical solution to conform to the continuous problem. This process is analogous to that of tempering in metallurgy. Steel can be made very hard by heating it to a high temperature and then rapidly cooling it in water or oil. This process is called \emph{quenching} and it makes the steel extremely hard, but also very brittle. \emph{Tempering} is then performed, when the steel is slowly reheated to an intermediate temperature and slowly cooled, allowing the internal stresses of the material to be relieved. This makes the alloy more flexible and ductile, while preserving most of the hardness. This is analogous to our approach in the FEM, where we decrease the diffusion coefficient (`soften') to increase flexibility, while preserving other desired qualities (e.g. good approximation properties). For this reason, we call the method \textbf{tempered finite elements (TFEM)}.

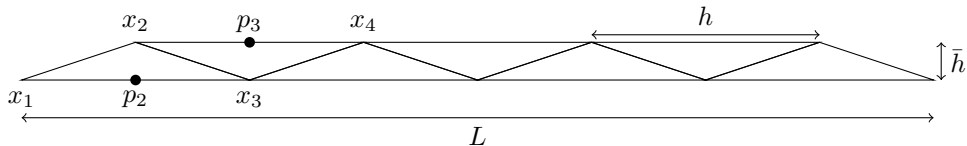
\begin{figure}[!ht]
\begin{center}
    \begin{tikzpicture}
        \draw (0,0) -- (3,0) -- (1.5,0.5) -- cycle;
        \draw (3,0) -- (1.5,0.5) -- (4.5,0.5) -- cycle;
        \draw (3,0) -- (4.5,0.5) -- (6,0) -- cycle;
        \draw (4.5,0.5) -- (6,0) -- (7.5,0.5) -- cycle;
        \draw (6,0) -- (7.5,0.5) -- (9,0) -- cycle;
        \draw (7.5,0.5) -- (9,0) -- (10.5,0.5) -- cycle;
        \draw (9,0) -- (10.5,0.5) -- (12,0) -- cycle;
        \draw[<->] (12.1,0) -- (12.1,0.5) node[midway, right] {$\bar{h}$};
        \draw[<->] (7.5,0.6) -- (10.5,0.6) node[midway, above] {$h$};
        \node at (0,-0.25) {$x_1$};
        \node at (3,-0.25) {$x_3$};
        \node at (1.5, 0.75) {$x_2$};
        \node at (4.5, 0.75) {$x_4$};
        \node at (1.5, -0.25) {$p_{2}$};
        \fill (1.5,0) circle (2pt);
        \node at (3, 0.75) {$p_{3}$};
        \fill (3,0.5) circle (2pt);
        \draw[<->] (0,-0.5) -- (12,-0.5) node[midway, below] {$L$};
    \end{tikzpicture}
\end{center}
\caption{Band of caps triangles in 2D.} \label{fig:2D_band}
\end{figure}
The paper is structured as follows.
First we perform a simple numerical experiment on a manufactured Laplacian to show that, using a simple numerical trick, we are able to recover finite element convergence. The TFEM solution on a mesh containing a band of caps is compared with the finite element solution on a regular mesh. Section 3 is dedicated to a rigorous mathematical analysis, confirming the numerical experiment. We demonstrate that the TFEM convergence has the same rate as the one of the traditional finite element method on regular meshes. In the next section, we present extensions to the TFEM method and its usefulness in various situations: linear elasticity, non-conformal mesh mortaring, high-order elements, and advection. 
\section{A simple numerical experiment}
\label{sec:experiment}
In this section, we solve a classical finite element problem on a series of meshes exhibiting a band of degenerate caps and observe 
that a simple \emph{numerical trick} is enough to avoid the locking phenomenon. 

\subsection{Finite element formulation of the 2D Poisson problem}
\label{section:FEM}
Let $\Omega \subset \mathbb{R}^2$ be a planar domain with its boundary $\partial \Omega$.
We seek $u$, the solution of:
\begin{align} \label{eq:problem}
    -\Delta u &= f \hspace{1.25cm}  \text{on } \Omega, \\
    u &= u_D \hspace{1cm} \text{on } \partial \Omega. \nonumber
\end{align}
Defining the space $V = \{u \in H^1(\Omega)~,~ u = u_D~\text{on}~\partial \Omega\}$, the corresponding weak form  is to find $u \in V$, such that 
\begin{equation} \label{eq:weak}
    \int_{\Omega} \nabla u \cdot \nabla v \dx = \int_{\Omega} f v \dx \, , \hspace{1cm} \forall v \in V^0,
\end{equation} \\
where $V^0 = H^1_0(\Omega) = \{v \in H^1(\Omega)~,~ v = 0~\text{on}~\partial \Omega\}.$
This problem is solved on a mesh denoted $\mathcal{T}_{h}$ composed of triangular elements of size $h$. The finite element method constructs a space ${X_h} \subset H^1(\Omega)$ of continuous and piecewise linear function over the mesh along with the space $V_h=\{v_h\in X_h~,~ v_h =\tilde{u}_D\text{ on }\partial\Omega\}$, where $\tilde{u}_D$ is a piecewise linear interpolation of $u_D$ on $\partial\Omega$. By defining $V^0_h = X_h \cap V^0$, we obtain the discrete version of (\ref{eq:weak}). 
\vspace{0.2cm} \\
Find $u_h \in V_h$ such that 
\begin{equation} \label{eq:discrete}
    \int_{\Omega} \nabla u_h \cdot \nabla v_h \dx = \int_{\Omega} f v_h \dx \, , \hspace{1cm} \forall v_h \in V^0_h.
\end{equation}
Following the classical finite element approach the matrix is obtained as an assembly of local matrices $K_{ij}$ evaluated on a single element $E$, $i$ and $j$ being two nodes of the element $E$:
\[
  K_{ij}
  = \int_E \nabla \phi_i \cdot \nabla \phi_j \d x. \nonumber
\]
A usual way of obtaining the gradients of the shape functions and integrating them is via a parent element. The coordinates of our element $E$ are expressed in $\mathbf{x} = [x,y]$ while for the parent element they are expressed in $\boldsymbol{\xi} = [\xi, \eta]$. The mapping between both coordinate systems  is represented in Figure \ref{fig:mapping}.
\begin{figure}[!ht]
\begin{center}
    \begin{tikzpicture}
        \draw[] (0,0)  --  (2,0.4)  --  (4,0)  -- (0,0);
        \draw[->] (-0.1,0.5)--(0.4,0.5) node[right]{$x$};
        \draw[->] (-0.1,0.5)--(-0.1,1.0) node[above]{$y$};
        \draw[] (6,0)
        -- (7.5,0) node[below]{$(1,0)$}
        -- (6,1.5) node[left]{$(0,1)$}
        -- (6,0) node[below]{$(0,0)$};
        \draw[->] (6.0,0)--(8.0,0) node[right]{$\xi$};
        \draw[->] (6.0,0)--(6.0,2.0) node[above]{$\eta$};
        \draw[-latex] (2.9,  -0.5)  arc  (-130:-50:2)  node [midway, above] {$\boldsymbol{\xi}(\boldsymbol{x})$};
        \draw[-latex] (5.5, 2)  arc  (50:130:2) node[midway, below] {$\boldsymbol{x}(\boldsymbol{\xi})$};
        \node at (3,0.6) {$E$};
        \node at (7,1) {$\hat{E}$};
    \end{tikzpicture}
\end{center}
\caption{Mapping between the physical and reference domains.} \label{fig:mapping}
\end{figure}
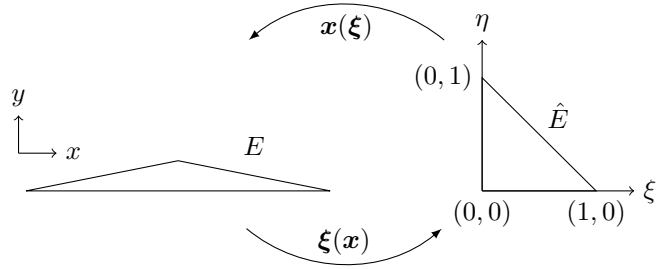

\noindent The gradient of 
$\phi$ in the element $E$ can then be obtained via the chain-rule $\dpartial{\phi}{\mathbf{x}} = \dpartial{\phi}{\boldsymbol{\xi}}\dpartial{\boldsymbol{\xi}}{\mathbf{x}}$. The calculation of the gradient of the transform $\dpartial {\boldsymbol{\xi}}{\mathbf{x}} $ is given by:
\[
  \dpartial{\boldsymbol{\xi}}{\boldsymbol{x}} = \left( \dpartial{\boldsymbol{x}}{\boldsymbol{\xi}} \right)^{-1} = \frac{1}{\J} \begin{bmatrix}
    \dpartial{y}{\eta} & -\dpartial{x}{\eta} \\
    - \dpartial{y}{\xi} & \dpartial{x}{\xi} 
  \end{bmatrix}
\]
where $\J = \dpartial{x}{\xi}\dpartial{y}{\eta}-\dpartial{x}{\eta}\dpartial{y}{\xi}$ is the mapping determinant.
For a triangular element, $\J$ is constant and equals to two times the area of the element.
$\J$ also appears in the elementary surface of the integral $\d \boldsymbol{x} = J \d \boldsymbol{\xi}$. Finally, the local matrix $K_{ij}$ can be expressed as:
\begin{equation}
  K_{ij}
  =
  \int_{\hat{E}}
 \frac{1}{\J}
  \begin{bmatrix}
    \dpartial {\phi_i}{\xi} & \dpartial{\phi_i}{\eta}
  \end{bmatrix} 
  \begin{bmatrix} 
    \dpartial y\eta & -\dpartial x\eta\\
    -\dpartial y\xi & \dpartial x\xi
  \end{bmatrix} 
  \begin{bmatrix} 
    \dpartial y\eta & -\dpartial y\xi\\
    -\dpartial x\eta & \dpartial x\xi
  \end{bmatrix} 
    \begin{bmatrix}
      \dpartial {\phi_j} \xi \\ \dpartial {\phi_j} \eta
    \end{bmatrix}
  \, \d \boldsymbol{\xi}.
  \label{eq:local_matrix}
\end{equation}

\subsection{Tempered finite element on a degenerate mesh}
\begin{figure}
    \centering
  \begin{tikzpicture}[scale=0.95]
    \node at (0,0) {\includegraphics[width=5cm]{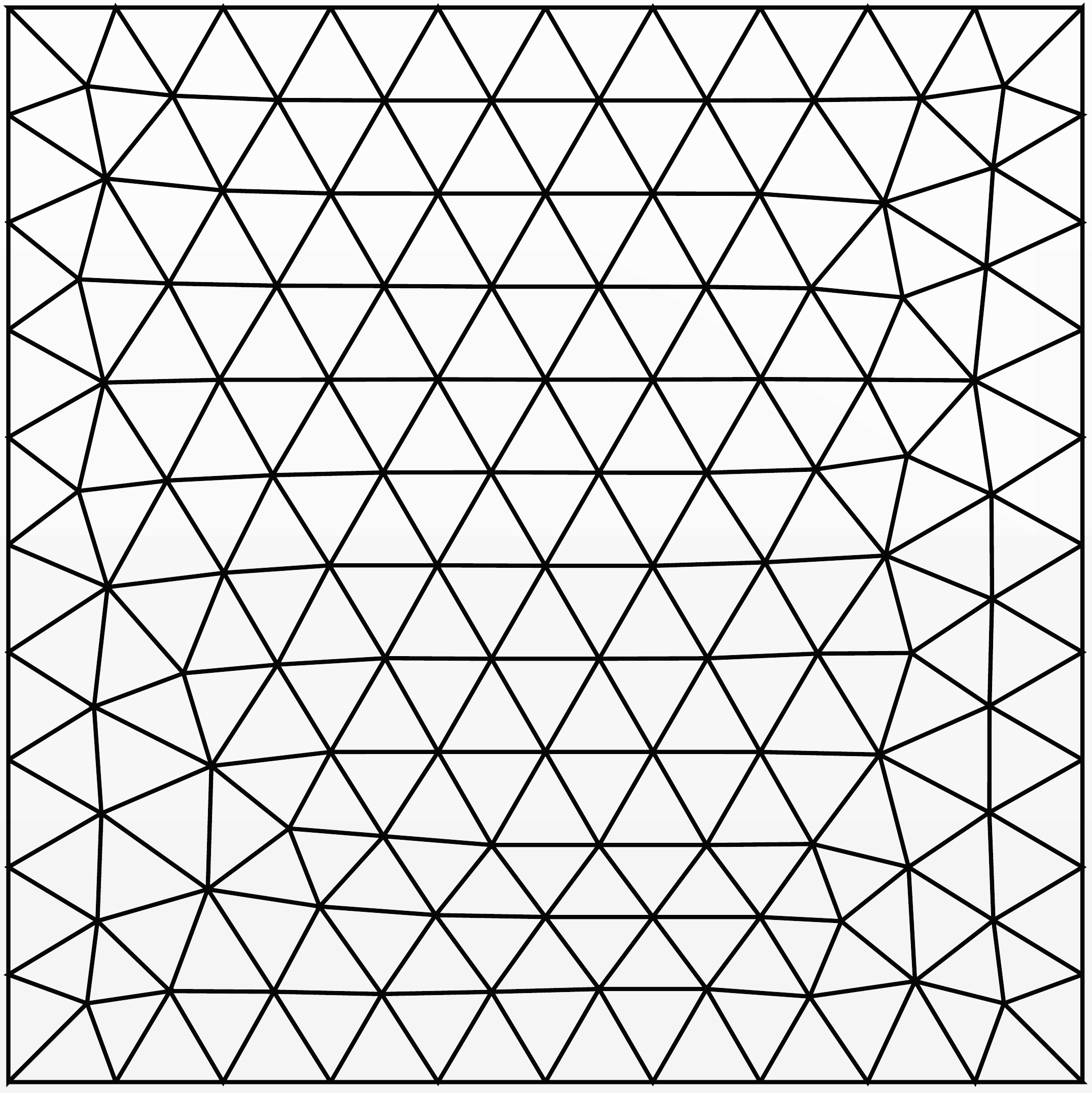}};
    \node at (5.5,0) {\includegraphics[width=5cm]{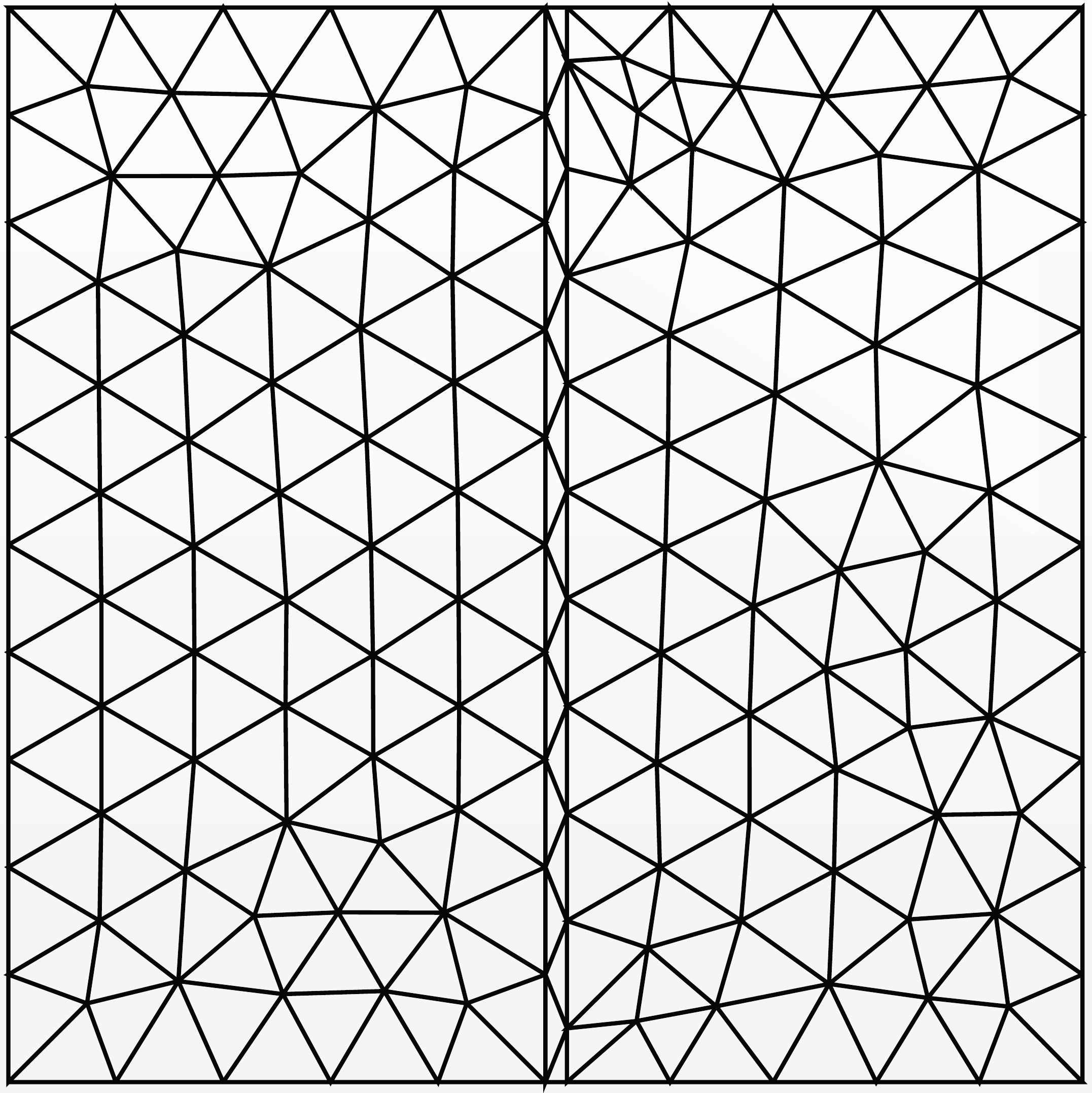}};
    \node at (11,0) {\includegraphics[width=5cm]{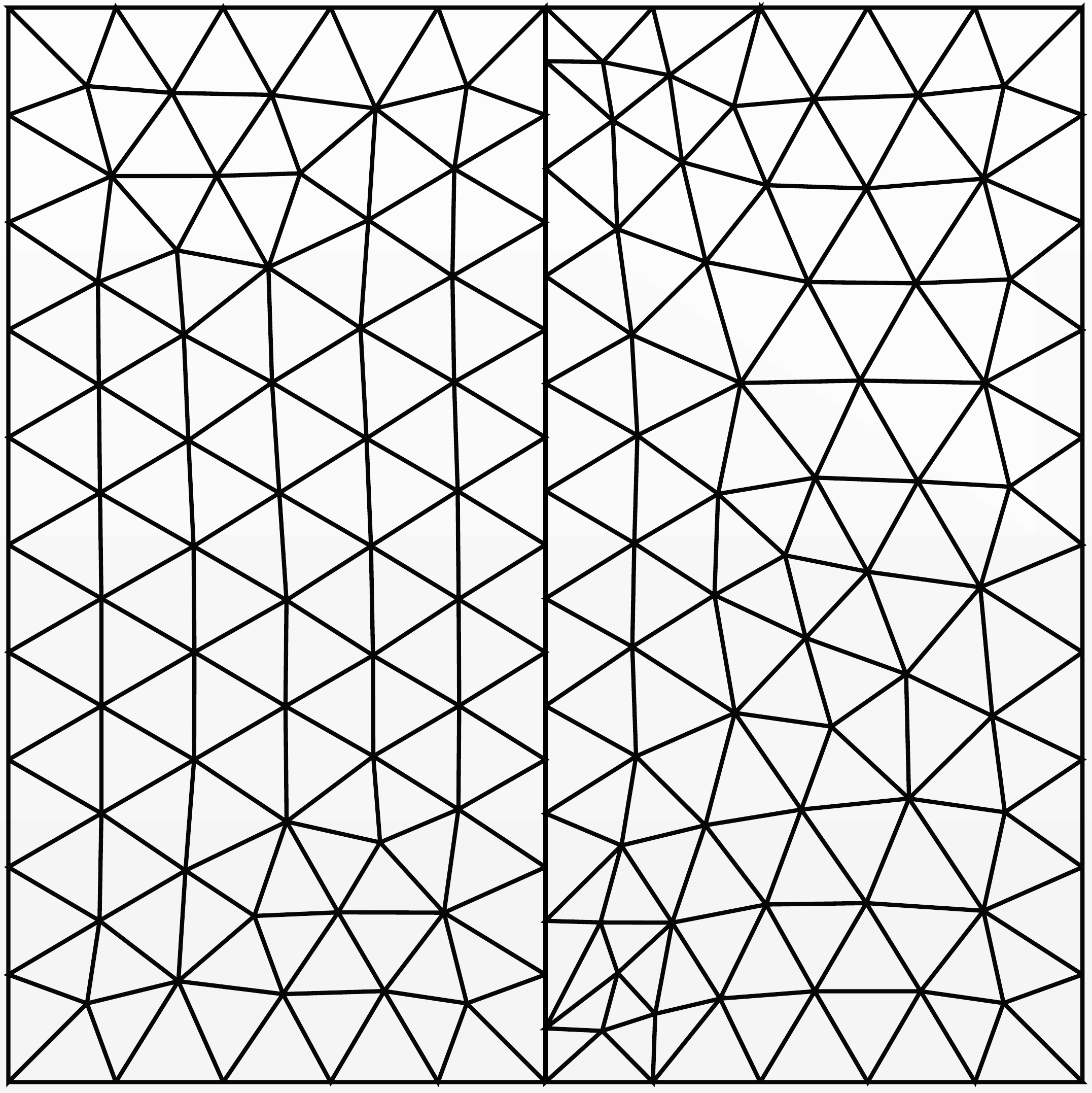}};
    \draw[gray, fill=gray, nearly transparent] (5.5,-2.57) rectangle (5.6,2.6);
    \node at (5.65, -2.85) {$\mathcal{B}$};
    \draw [<->] (5.45,2.8) -- node[above=1mm] {$\width$} (5.65,2.8);
    \draw [<->] (4.47,-2.7) -- node[below] {$h$} (4.98,-2.7);
  \end{tikzpicture}
    \caption{Three 2D meshes: regular (left), with a band of caps (center) and with a fully degenerated band of caps (right).}
    \label{fig:meshes}
\end{figure}
\begin{figure}[!ht]
  \centering
  \begin{tikzpicture}[scale=0.95]
    \node at (0,0) {\includegraphics[width=3.7cm]{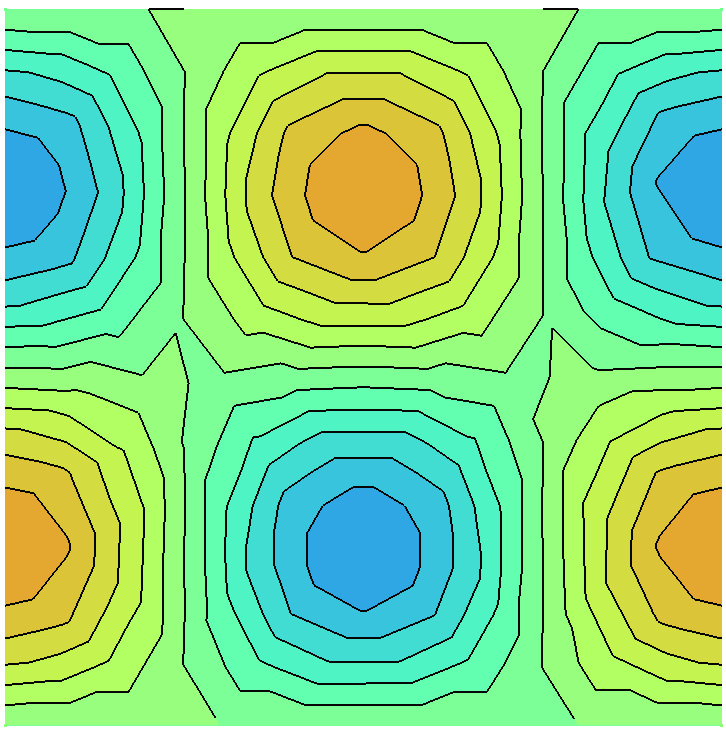}};
    \node at (0,2.3) {a) Regular};
    \node at (4,0) {\includegraphics[width=3.7cm]{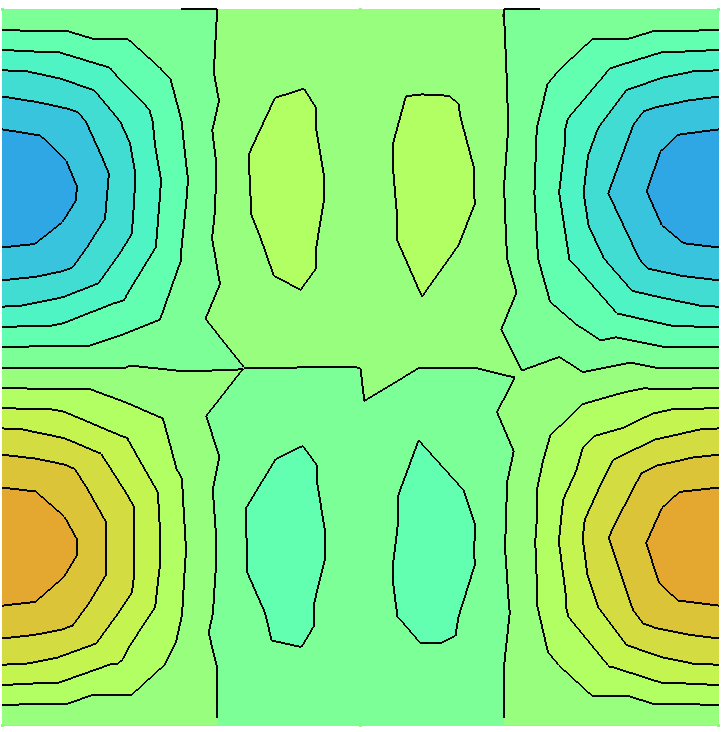}};
    \node at (4,2.3) {b) Band $\Jmin = 10^{-8}$};
    \node at (8,0) {\includegraphics[width=3.7cm]{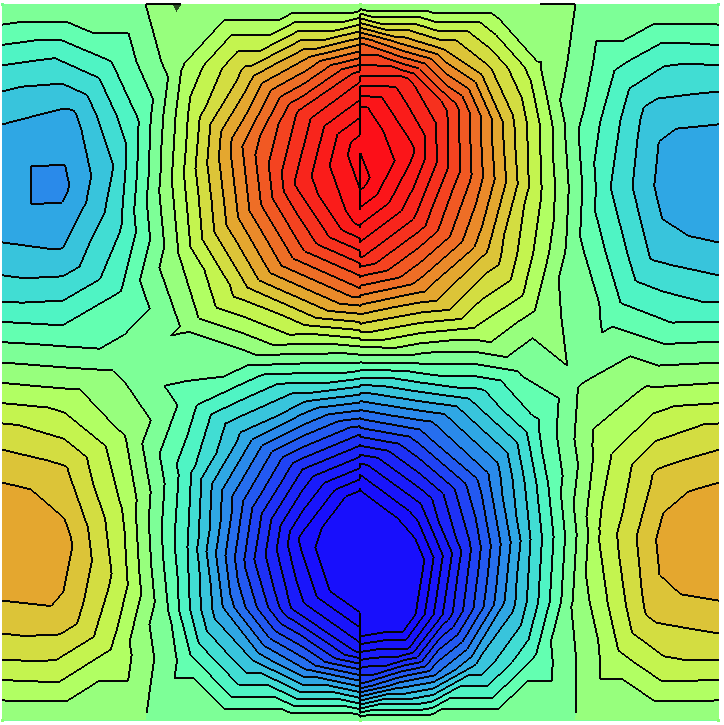}};
    \node at (8,2.3) {c) Band $\Jmin = 10^{-2}$};
    \node at (12,0) {\includegraphics[width=3.7cm]{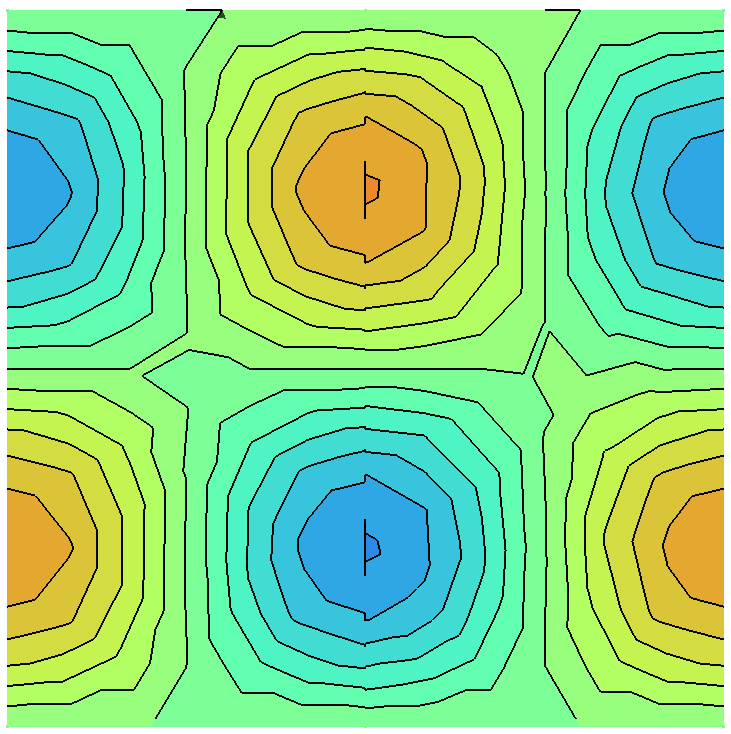}};
    \node at (12,2.3) {d) Band $\Jmin = 10^{-3}$};
    \node at (6,-2.3) {\includegraphics[width=10cm]{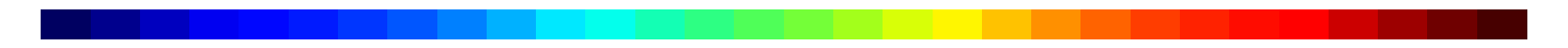}};
    \node at (0.5,-2.3) {$-2.5$};
    \node at (11.5,-2.3) {$2.5$};
    \draw [draw=black, thick] (-1.93,-1.93) rectangle (1.93,1.93);
    \draw [draw=black, thick] (2.07,-1.93) rectangle (5.93,1.93);
    \draw [draw=black, thick] (6.07,-1.93) rectangle (9.93,1.93);
    \draw [draw=black, thick] (10.07,-1.93) rectangle (13.93,1.93);
  \end{tikzpicture}
    \caption{Solution of the manufactured solution on the regular mesh and on a mesh with a degenerated band with $h=\frac{1}{10}$ for different values of $\Jmin$.}
    \label{fig:sol_manufactured}
\end{figure}
The rightmost mesh of Figure \ref{fig:meshes} 
looks non-conforming with T-junctions. This is in fact not the case. The mesh has been progressively deformed, starting from Figure \ref{fig:meshes} (center), to bring two sub-meshes into contact by "crushing" 
the intermediate \emph{band of caps} denoted $\mathcal{B}$. 
As the band reaches a zero thickness, the finite element is forced to
be linear (or constant) along the band -- this is precisely the locking phenomenon described in the Introduction.
In practice however, it is not possible to assemble the stiffness matrix \eqref{eq:local_matrix} for a zero thickness band since 
it results in a division by zero.
A handy way to avoid this division is to modify $K \rightarrow \tilde K$ by
bounding the value of $\J$ away from zero
with some value denoted $\Jmin$.
Equation \eqref{eq:local_matrix} becomes:
\begin{equation}
  \tilde K_{ij}
  = 
  \int_{\hat{E}}
\frac{1}{\text{max}(\J, \Jmin)}
  \begin{bmatrix}
    \dpartial {\phi_i}{\xi} & \dpartial{\phi_i}{\eta}
  \end{bmatrix} 
  \begin{bmatrix} 
    \dpartial y\eta & -\dpartial x\eta\\
    -\dpartial y\xi & \dpartial x\xi
  \end{bmatrix} 
  \begin{bmatrix} 
    \dpartial y\eta & -\dpartial y\xi\\
    -\dpartial x\eta & \dpartial x\xi
  \end{bmatrix} 
    \begin{bmatrix}
      \dpartial {\phi_j} \xi \\ \dpartial {\phi_j} \eta
    \end{bmatrix}
  \, \d \boldsymbol{\xi}.
  \label{eq:local_matrix_mod}
\end{equation}
In the rest of this paper we will note $\tilde{u}_h$ the solution obtained using this modified matrix.
Choosing the smallest possible nonzero value for $\Jmin$ allows one to achieve optimum finite element convergence for isolated caps or needles and even for bands of needles.

In the case of the band of caps, the concern is that we are not interested in finding the zero-measure limit solution because of locking. The question is then: is there a value for $\Jmin$, possibly dependent on $h$, which is small enough to allow optimal convergence but large 
enough to avoid locking? The answer is yes, and that's what this paper is all about. We shall call the resulting numerical method corresponding to \eqref{eq:local_matrix_mod} the \emph{Tempered Finite Element Method} (TFEM).

In the case when the band elements are not exactly degenerate ($\J > 0$), we can express the modified local matrix from equation \eqref{eq:local_matrix_mod} as
\begin{equation} \label{eq:Ktilde_K}
     \tilde K_{ij}
  = \frac{\J}{\max(\J, \Jmin)} K_{ij}.
\end{equation}
We call this coefficient $d(x) = \frac{\J}{\max(\J, \Jmin)}$. This corresponds to modifying the basic FEM scheme \eqref{eq:discrete} by introducing a variable diffusion parameter $0<d(x)\leq 1$ which equals to a carefully chosen value $0<D = \frac{\J}{\Jmin}<1$ on $\B$. Define 
\begin{equation}
d(x)=\begin{cases}
1,& x\in\Omega\setminus\B,\\
D,& x\in\B.
\end{cases}
\label{eq:def_D}
\end{equation}
The Tempered FEM problem can then be reformulated as finding $\tu_h\in V_h$ such that
\begin{equation}
\int_\Omega d(x)\nabla \tu_h\cdotp\nabla v_h \dx=(f,v_h),\quad\forall v_h\in V_h^0.
\label{eq:FEM_mod}
\end{equation}

Equation \eqref{eq:FEM_mod} motivates the name `tempered' FEM. We effectively lower the diffusion coefficient locally to recover good approximation properties of the original FEM. As explained in more detail in the Introduction, we view this as local `softening' of the material which we regard as analogous to \emph{tempering} in metallurgy.

In the case when the elements in the band $\B$ are exactly degenerate ($J=0$), deriving a formulation similar to \eqref{eq:FEM_mod} is not as straightforward. One must consider the limit of \eqref{eq:FEM_mod} as $\width\to 0$. This is done in detail in Section \ref{section:zero-measure}. Surprisingly, the resulting scheme \eqref{eq:FEM_zero} is a penalization (or mortaring) method.

What remains is to determine an appropriate value of $\Jmin$. We start with a numerical experiment. Let's consider two manufactured problems, one in 2D and the other one in 3D. 
The domains $\Omega$ are the unit square in 2D and the unit cube in 3D.
The exact solutions are respectively 
$u=\sin (2\pi x) \cos (2 \pi y)$ in 2D and $u= \cos (2\pi x) \sin (2 \pi y) \cos (2 \pi z)$ in 3D.
The source term in \eqref{eq:problem} is 
$f = 8 \pi^2 \sin (2\pi x) \cos (2 \pi y)$ in 2D and $f = 12 \pi^2 \cos (2\pi x) \sin (2 \pi y) \cos (2 \pi z)$ in 3D.

As illustrated in Figure \ref{fig:sol_manufactured}, a small $\Jmin=10^{-8}$ which would have worked perfectly for isolated caps or needles leads to locking.
On the other hand, a large $\Jmin=10^{-2}$ leads to a band that is, say, ``too elastic" and solution jumps of too large a magnitude are observed across the band.
However, an appropriate choice of $\Jmin=10^{-3}$ leads to a solution that is visually close to the reference solution on a standard mesh. The solution with $\Jmin=10^{-3}$ still contains jumps but these are limited.

\subsection{Convergence}
An appropriate choice of $\Jmin$ seems to provide a qualitatively acceptable solution. How does it compare to the solution on a regular mesh?
Is it possible to choose $\Jmin$ so that the convergence rate is the same as the one of the classical finite elements?
To answer these questions, we compare the $L^2$ error and the $H^1$ error outside the band  on two series of meshes: a series of regular successively refined meshes with different element size $h$ and a series of meshes with the same mesh size and a band of cap in the middle.  
We refine the mesh and observe how $\Jmin$ must vary in order to make the error converge. We assume that $\Jmin$ evolves as a power of $h$ such that :
\begin{align*}
    \Jmin = C h^k,
\end{align*}
where $C$ is a parameter that will be analyzed later and $k$ a coefficient that must be adequately chosen. 
Since we have observed that the choice of $\Jmin=10^{-3}$ was judicious for a mesh of size $h=\frac{1}{10}$, we choose $C = 10^{k-3}$ so that we get this value for any choice of $k$. We can see in Figure \ref{fig:conv_2D} (left) that the curves (except the reference) starts from the same point for this reason.
The 2D and 3D convergence studies for the $L^2(\Omega)$ and $H^1(\Omega \setminus \B)$ errors are shown  in Figures \ref{fig:conv_2D} and \ref{fig:conv_3d}. These error were computed by comparing to the analytical solution and integrated by a 4th order quadrature rule. Different choices of $k$ are studied. It can be seen that in the 2D case, the choice of $k=3$ gives the optimal rate of convergence of the error in the $L^2$ norm. For the $H^1$ semi-norm, the optimal choice of $k$ lies  in the range $[2,4]$. 
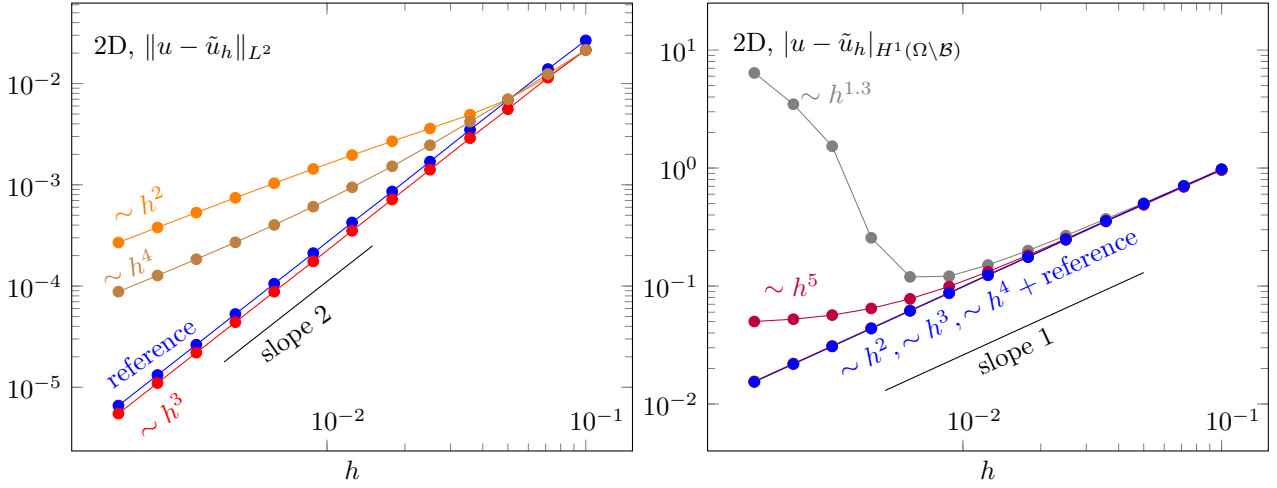
\begin{figure}[!ht]
    \hspace{-0.75cm}%
    \begin{tikzpicture}
        \begin{axis}[
            xlabel={$h$},
            ylabel={2D, $\|u - \tilde{u}_h\|_{L^2}$},
            xtick={0.01, 0.1},
            xticklabel style={above, yshift=1ex, xshift=1ex},
            ytick={0.00001, 0.0001, 0.001, 0.01, 0.1},
            legend pos=north west,
            ymajorgrids=true,
            grid=none,
            xmode=log,
            ymode=log,
            width=0.56\textwidth,
            height=0.47\textwidth,
            x label style={at={(axis description cs:0.5,-0.0)},anchor=north},
            y label style={at={(axis description cs:0.2,0.85)},rotate=-90,anchor=south},
        ]
        
        \addplot[
            color=blue,
            mark=*,
            ]
            coordinates {
                (0.1000000000, 0.0266839393) (0.0714285714, 0.0139108849) (0.0500000000, 0.0068525003) (0.0357142857, 0.0034963940) (0.0250000000, 0.0016951800) (0.0178571429, 0.0008593616) (0.0125000000, 0.0004247803) (0.0088495575, 0.0002114534) (0.0062500000, 0.0001055128) (0.0044247788, 0.0000529551) (0.0031250000, 0.0000263535) (0.0022123894, 0.0000132260) (0.0015625000, 0.0000066000)
            };
            
        \addplot[
            color=orange,
            mark=*,
            ]
            coordinates {
                (0.1000000000, 0.0214721462) (0.0714285714, 0.0116559864) (0.0500000000, 0.0069565155) (0.0357142857, 0.0049252109) (0.0250000000, 0.0036027890) (0.0178571429, 0.0026950458) (0.0125000000, 0.0019649096) (0.0088495575, 0.0014348451) (0.0062500000, 0.0010351698) (0.0044247788, 0.0007443398) (0.0031250000, 0.0005316409) (0.0022123894, 0.0003793773) (0.0015625000, 0.0002694593)
            };
        
        \addplot[
            color=red,
            mark=*,
            ]
            coordinates {
                (0.1000000000, 0.0214721462) (0.0714285714, 0.0114294197) (0.0500000000, 0.0055882920) (0.0357142857, 0.0028900441) (0.0250000000, 0.0014094087) (0.0178571429, 0.0007171503) (0.0125000000, 0.0003514386) (0.0088495575, 0.0001752714) (0.0062500000, 0.0000879023) (0.0044247788, 0.0000439796) (0.0031250000, 0.0000219740) (0.0022123894, 0.0000110109) (0.0015625000, 0.0000054950)
            };

        \addplot[
            color=brown,
            mark=*,
            ]
            coordinates {
                (0.1000000000, 0.0214721462) (0.0714285714, 0.0124283812) (0.0500000000, 0.0069988804) (0.0357142857, 0.0042042558) (0.0250000000, 0.0024595430) (0.0178571429, 0.0015239162) (0.0125000000, 0.0009445083) (0.0088495575, 0.0006083028) (0.0062500000, 0.0004015884) (0.0044247788, 0.0002703863) (0.0031250000, 0.0001841136) (0.0022123894, 0.0001270218) (0.0015625000, 0.0000880670) 
            };

        \addplot[color=black, mark=none] 
            coordinates {(0.015, 0.00025) (0.004, 0.00001778)};

            \node [orange, rotate=20] at (rel axis cs: 0.12,0.55) { $\sim h^2$ };
            \node [brown, rotate=20] at (rel axis cs: 0.1,0.41) { $\sim h^4$ };
            \node [blue, rotate=35] at (rel axis cs: 0.14, 0.22) { reference };
            \node [red, rotate=35] at (rel axis cs: 0.16,0.08) { $\sim h^3$ };
            \node [black, rotate=35] at (rel axis cs: 0.40,0.26) { slope 2 };


        \end{axis}
        \end{tikzpicture}%
    \begin{tikzpicture}%
        \begin{axis}[
            xlabel={$h$},
            ylabel={2D, $|u - \tilde{u}_h|_{H^1(\Omega \setminus \B)}$},
            xtick={0.01, 0.1, 1},
            ytick={0.01, 0.1, 1, 10},
            xticklabel style={above, yshift=1ex, xshift=1ex},
            ymin=0.004, 
            ymax=25,
            legend pos=north west,
            ymajorgrids=true,
            grid=none,
            xmode=log,
            ymode=log,
            width=0.56\textwidth,
            height=0.47\textwidth,
            x label style={at={(axis description cs:0.5,-0.0)},anchor=north},
            y label style={at={(axis description cs:0.25,0.85)},rotate=-90,anchor=south},
        ]

        \addplot[
            color=gray,
            mark=*,
            ]
            coordinates {
                (0.1000000000, 0.9636902479) (0.0714285714, 0.7048311021) (0.0500000000, 0.5039307080) (0.0357142857, 0.3706453608) (0.0250000000, 0.2672360391) (0.0178571429, 0.1996628427) (0.0125000000, 0.1506109771) (0.0088495575, 0.1211425292) (0.0062500000, 0.1194474119) (0.0044247788, 0.2561343607) (0.0031250000, 1.5315294912) (0.0022123894, 3.4743380084) (0.0015625000, 6.4004642796)
            };
        
        \addplot[
            color=orange,
            mark=*,
            ]
            coordinates {
                (0.1000000000, 0.9636902479) (0.0714285714, 0.6996743790) (0.0500000000, 0.4939414661) (0.0357142857, 0.3567864380) (0.0250000000, 0.2494729485) (0.0178571429, 0.1782310473) (0.0125000000, 0.1248540975) (0.0088495575, 0.0880640429) (0.0062500000, 0.0624322144) (0.0044247788, 0.0442116142) (0.0031250000, 0.0312244389) (0.0022123894, 0.0221074313) (0.0015625000, 0.0156134177)
            };

        \addplot[
            color=red,
            mark=*,
            ]
            coordinates {
                (0.1000000000, 0.9636902479) (0.0714285714, 0.6956093638) (0.0500000000, 0.4894145218) (0.0357142857, 0.3530243653) (0.0250000000, 0.2466069404) (0.0178571429, 0.1761063878) (0.0125000000, 0.1233299333) (0.0088495575, 0.0869707695) (0.0062500000, 0.0616584973) (0.0044247788, 0.0436625698) (0.0031250000, 0.0308362919) (0.0022123894, 0.0218324634) (0.0015625000, 0.0154191468)
            };
        
        \addplot[
            color=brown,
            mark=*,
            ]
            coordinates {
                (0.1000000000, 0.9636902479) (0.0714285714, 0.6938467007) (0.0500000000, 0.4887001077) (0.0357142857, 0.3529291614) (0.0250000000, 0.2467172649) (0.0178571429, 0.1762515174) (0.0125000000, 0.1234534191) (0.0088495575, 0.0870677041) (0.0062500000, 0.0617295801) (0.0044247788, 0.0437140305) (0.0031250000, 0.0308730793) (0.0022123894, 0.0218586442) (0.0015625000, 0.0154376911)
            };

        \addplot[
            color=purple,
            mark=*,
            ]
            coordinates {
                (0.1000000000, 0.9636902479) (0.0714285714, 0.6935450541) (0.0500000000, 0.4901410911) (0.0357142857, 0.3556585977) (0.0250000000, 0.2509610416) (0.0178571429, 0.1823488201) (0.0125000000, 0.1321581445) (0.0088495575, 0.0991187614) (0.0062500000, 0.0778697720) (0.0044247788, 0.0645660706) (0.0031250000, 0.0566866895) (0.0022123894, 0.0523374927) (0.0015625000, 0.0500033085)
            };

        \addplot[
            color=blue,
            mark=*,
            ]
            coordinates {
                (0.1000000000, 0.9796381199) (0.0714285714, 0.7060265326) (0.0500000000, 0.4975812972) (0.0357142857, 0.3554175512) (0.0250000000, 0.2474763333) (0.0178571429, 0.1758301014) (0.0125000000, 0.1238719022) (0.0088495575, 0.0872497875) (0.0062500000, 0.0616525345) (0.0044247788, 0.0436945648) (0.0031250000, 0.0308110241) (0.0022123894, 0.0218319842) (0.0015625000, 0.0154226963) 
            };

        \addplot[color=black, mark=none] 
        coordinates {(0.05, 0.13) (0.005, 0.013)};
        
        \node [gray, rotate=0] at (rel axis cs: 0.23,0.80) { $\sim h^{1.3}$ };
        \node [purple, rotate=0] at (rel axis cs: 0.15,0.37) { $\sim h^5$ };
        \node [blue, rotate=25] at (rel axis cs: 0.50,0.33) { $\sim h^2, \sim h^3, \sim h^4 + \text{reference}$ };
        \node [black, rotate=25] at (rel axis cs: 0.55,0.22) { slope 1 };

        \end{axis}
        \end{tikzpicture}
        \caption{Convergence of the error in the $L^2$ norm (left) and $H^1$ semi-norm (right) in 2D.}
        \label{fig:conv_2D}
\end{figure}\\
\begin{figure}[!ht]
    \hspace{-0.75cm}%
    \begin{tikzpicture}
        \begin{axis}[
            xlabel={$h$},
            ylabel={3D, $\|u - \tilde{u}_h\|_{L^2}$},
            xtick={0.01, 0.1},
            ytick={0.001, 0.01, 0.1},
            xmin=0.005, xmax=0.12,
            ymin=0.00015, ymax=0.1,
            legend pos=north west,
            grid=none,
            xmode=log,
            ymode=log,
            width=0.56\textwidth,
            height=0.47\textwidth,
            x label style={at={(axis description cs:0.5,-0.0)},anchor=north},
            y label style={at={(axis description cs:0.2,0.85)},rotate=-90,anchor=south},
            xticklabel style={above, yshift=1ex, xshift=0.5ex},
        ]
        
        \addplot[
            color=blue,
            mark=*,
            ]
            coordinates {
                (0.1, 0.04643290078593369) (0.08333333333333333, 0.03276972684947128) (0.06666666666666667, 0.02144780899583039) (0.05, 0.012177305154034883) (0.04, 0.007737666542277612) (0.03225806451612903, 0.005013909681721832) (0.025, 0.0030221421146074613) (0.02, 0.0019384668179921961) (0.015873015873015872, 0.0012151597313434857) (0.0125, 0.0007537859673007643) (0.01, 0.00048093514839835257)
            }; 
            
        \addplot[
            color=orange,
            mark=*,
            ]
            coordinates {
                (0.1, 0.04674452663530721) (0.08333333333333333, 0.029943849787643518) (0.06666666666666667, 0.019383324121442735) (0.05, 0.010214433869905332) (0.04, 0.006314695298849563) (0.03225806451612903, 0.003971014369691118) (0.025, 0.002372243211772429) (0.02, 0.001604038775607533) (0.015873015873015872, 0.0011483254393485855) (0.0125, 0.0008750649003314969) (0.01, 0.0007067839194954369) 
            }; 
        
        \addplot[
            color=red,
            mark=*,
            ]
            coordinates {
                (0.1, 0.0467445266353072) (0.08333333333333333, 0.030892058233764143) (0.06666666666666667, 0.020854910642475504) (0.05, 0.011652736450799806) (0.04, 0.007533739402746608) (0.03225806451612903, 0.004878560920464494) (0.025, 0.00292165458002299) (0.02, 0.0018686894412387669) (0.015873015873015872, 0.0011746920059553033) (0.0125, 0.000727422647729413) (0.01, 0.0004644053472442571)  
            }; 

        \addplot[
            color=brown,
            mark=*,
            ]
            coordinates {
                (0.1, 0.0467445266353072) (0.08333333333333333, 0.03183110479452874) (0.06666666666666667, 0.0223134521136697) (0.05, 0.013181527812446242) (0.04, 0.008945845664417473) (0.03225806451612903, 0.00611957388953101) (0.025, 0.003959497878568212) (0.02, 0.002742782462583584) (0.015873015873015872, 0.0019030624975992492) (0.0125, 0.0013272252537637403) (0.01, 0.0009636781145703335)
            }; 

            \addplot[color=black, mark=none] 
            coordinates {(0.05, 0.007) (0.02, 0.00112)};
            \node [orange, rotate=15] at (rel axis cs: 0.15,0.24) { $\sim h^3$ };
            \node [brown, rotate=25] at (rel axis cs: 0.27,0.38) { $\sim h^5$ };
            \node [blue, rotate=15] at (rel axis cs: 0.1, 0.15) { reference };
            \node [red, rotate=32] at (rel axis cs: 0.3,0.2) { $\sim h^4$ };
            \node [black, rotate=40] at (rel axis cs: 0.60,0.42) { slope 2 };

        \end{axis}
        \end{tikzpicture}%
        \begin{tikzpicture}
            \begin{axis}[
                xlabel={$h$},
                ylabel={3D, $|u - \tilde{u}_h|_{H^1(\Omega \setminus \B)}$},
                xtick={0.01, 0.1, 1},
                ytick={0.1, 1, 10},
                xmin = 0.006,
                ymin = 0.08,
                legend pos=north west,
                ymajorgrids=true,
                grid=none,
                xmode=log,
                ymode=log,
                width=0.56\textwidth,
                height=0.47\textwidth,
                x label style={at={(axis description cs:0.5,-0.0)},anchor=north},
                y label style={at={(axis description cs:0.25,0.85)},rotate=-90,anchor=south},
                xticklabel style={above, yshift=1ex, xshift=1ex},
            ]
            
            \addplot[
                color=blue,
                mark=*,
                ]
                coordinates {
                    (0.1, 1.4665157106670539) (0.08333333333333333, 1.233431263629783) (0.06666666666666667, 0.999709472927631) (0.05, 0.7503042080600089) (0.04, 0.5966226213279184) (0.03225806451612903, 0.4807871415512738) (0.025, 0.373954414990329) (0.02, 0.29978451644199555) (0.015873015873015872, 0.23765029296406195) (0.0125, 0.18737203104359024) (0.01, 0.14983889263098324)
                }; 
                
            \addplot[
                color=orange,
                mark=*,
                ]
                coordinates {
                    (0.1, 1.5959521441991298) (0.08333333333333333, 1.314234103215576) (0.06666666666666667, 1.0810465790862822) (0.05, 0.8251269969944133) (0.04, 0.6724382024990099) (0.03225806451612903, 0.551850102951522) (0.025, 0.4404175171122717) (0.02, 0.36294766502684433) (0.015873015873015872, 0.29886036134985267) (0.0125, 0.2460511613097628) (0.01, 0.20650606469701394) 
                }; 
            
            \addplot[
                color=red,
                mark=*,
                ]
                coordinates {
                    (0.1, 1.59595214419913) (0.08333333333333333, 1.3074460633620961) (0.06666666666666667, 1.0669748250907625) (0.05, 0.8025508886124623) (0.04, 0.6440980490851304) (0.03225806451612903, 0.5186544091407033) (0.025, 0.4026394736358069) (0.02, 0.32205377600465435) (0.015873015873015872, 0.25563954209727996) (0.0125, 0.2012637423970228) (0.01, 0.16096429348362132)
                }; 
    
            \addplot[
                color=brown,
                mark=*,
                ]
                coordinates {
                    (0.1, 1.59595214419913) (0.08333333333333333, 1.3036674484933357) (0.06666666666666667, 1.0646077449921765) (0.05, 0.8060390121813185) (0.04, 0.652857556090589) (0.03225806451612903, 0.5323866738826363) (0.025, 0.42153607871481247) (0.02, 0.34473720668017227) (0.015873015873015872, 0.2814823954515597) (0.0125, 0.22959268788436457) (0.01, 0.19094830066488644)
                }; 
    

            \addplot[color=black, mark=none] 
            coordinates {(0.07, 0.8) (0.035, 0.4)};
            \node [orange, rotate=32] at (rel axis cs: 0.25,0.41) { $\sim h^3$ };
            \node [brown, rotate=0] at (rel axis cs: 0.1,0.27) { $\sim h^5$ };
            \node [blue, rotate=38] at (rel axis cs: 0.35, 0.32) { reference };
            \node [red, rotate=0] at (rel axis cs: 0.1,0.21) { $\sim h^4$ };
            \node [black, rotate=38] at (rel axis cs: 0.7,0.55) { slope 1 };

            \end{axis}
            \end{tikzpicture}
    \caption{Convergence of the error in the $L^2$ norm (left) and $H^1$ semi-norm (right) in 3D.}
    \label{fig:conv_3d} 
\end{figure}
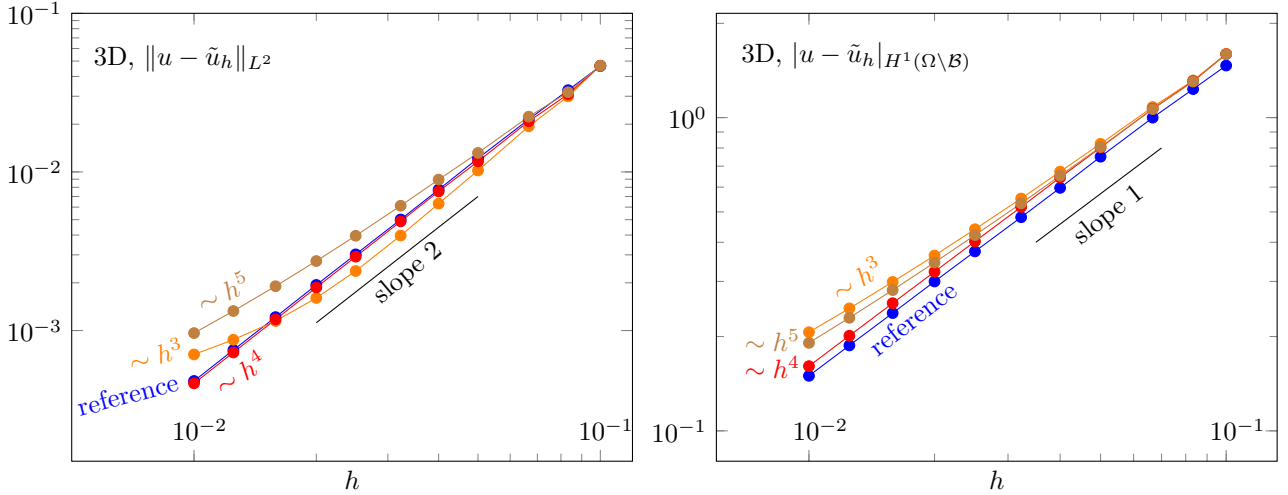\\
In 3D, where an analogue of a band of caps is a thin slab or `surface' of degenerate tetrahedra (spades and slivers in our case), the proper rate of convergence  is obtained for the $L^2$ and $H^1$ error with the choice of $k=4$. The choice of a higher exponent doesn't bring a too large error but will eventually create a locking effect of the solution as $h$ is reduced.
It may come as a surprise that the error of the TFEM method is lower than that of the classical FEM on a regular mesh, while the latter gives the best approximation of the solution in the discrete space $V_h$. This is because it is the best approximation in the energy norm of the problem, which is in our case the $H^1$ semi-norm. In the $H^1$ semi-norm, the finite element error on a regular mesh is lower, but this is not the case for the $L^2$ norm as we can see in the Figure \ref{fig:conv_2D}.
While the finite element method minimizes the error for the $H^1$ semi-norm, the tempered FEM cannot offer a better solution than the classical FEM, as observed in Figures \ref{fig:conv_2D} and \ref{fig:conv_3d}. Nevertheless, in 2D, the errors are almost identical, and in 3D, the TFEM errors are quite close to those of the classical FEM.

\subsection{Error sensitivity and  choice of \texorpdfstring{$\Jmin$}{}} \label{sensitivity}
To assess the sensitivity of the solution to the choice of $\Jmin$, we have conducted the same test case across a broad spectrum of C values using the formula $\Jmin = Ch^{d+1}$, $d=2,3$ being the problem dimension.
The $H^1$ error is less sensitive than the $L^2$ error, as illustrated in Figure \ref{fig:coeff_3D} and \ref{fig:coeff_2D}. This explains why, in the convergence graphs (Figures \ref{fig:conv_2D} and \ref{fig:conv_3d}), the convergence rate deteriorates sooner for the $L^2$ error compared to the $H^1$ error. The two dashed lines represent the error for the FEM on a regular mesh. For a specific range of $C$ values, the $L^2$ error of the TFEM is lower than that for the FEM. This range, spanning almost an order of magnitude in 3D, indicates a degree of robustness in the method regarding the choice of $C$.
\input{coeff_2D.tex}\\
The results in 2D and 3D are similar, with the notable exception that the \(H^1\) error is even less sensitive to the choice of \(J_{\text{min}}\)  as shown in Figure \ref{fig:coeff_2D}. Indeed, the error measured for the gradients exhibits a plateau, remaining nearly constant over a wide range of $C$ values. As the mesh is refined, this plateau seems to grow at a rate of $\sim h^{-1}$, explaining the range of exponent values for $h$ that allow gradient convergence. Choosing $\Jmin = Ch^3$ with a constant $C$ is effectively equivalent to choosing $\Jmin = Ch^2$ with a $C$ that varies as $\sim h$. The dotted line in Figure \ref{fig:coeff_2D} illustrates this scenario, showing it remains well within the plateau, ensuring the error converges at the expected rate.
\begin{figure}[!ht]
    \hspace{-0.75cm}
    \begin{tikzpicture}
        \begin{axis}[
            xlabel={$C$},
            ylabel={3D, $\|u - \tilde{u}_h\|_{L^2}$},
            xtick={1e-6, 1e-4, 1e-2, 1},
            ytick={1e-3, 1e-2, 1e-1},
            legend pos=north west,
            ymajorgrids=true,
            grid=none,
            xmin=5e-5,  xmax=200,
            ymin = 7e-4,
            ymax = 1,
            xmode=log,
            ymode=log,
            width=0.56\textwidth,
            height=0.47\textwidth,
            x label style={at={(axis description cs:0.5,0)},anchor=north},
            y label style={at={(axis description cs:0.2,0.85)},rotate=-90,anchor=south},
            xticklabel style={above, yshift=1ex, xshift=1.25ex},
        ]
        
        \addplot[
            color=blue,
            mark=*,
            mark options={scale=0.6},
            line width=1.pt
            ]
            coordinates {
                (0.0001000000, 0.3095094205) (0.0003162278, 0.2713403482) (0.0010000000, 0.1988105012) (0.0031622777, 0.1178097704) (0.0100000000, 0.0650798152) (0.0316227766, 0.0361357008) (0.1000000000, 0.0208831697) (0.3162277660, 0.0144928350) (1.0000000000, 0.0116527365) (3.1622776602, 0.0094734532) (10.0000000000, 0.0187749512) 
            }; 
            
        \addplot[
            color=orange,
            mark=*,
            mark options={scale=0.6},
            line width=1.pt
            ]
            coordinates {
                (0.0001000000, 0.2957523808) (0.0003162278, 0.2449711597) (0.0010000000, 0.1639255279) (0.0031622777, 0.0912050792) (0.0100000000, 0.0490240171) (0.0316227766, 0.0256047887) (0.1000000000, 0.0139544869) (0.3162277660, 0.0094361565) (1.0000000000, 0.0075337394) (3.1622776602, 0.0061018106) (10.0000000000, 0.0117126337) 
            }; 
        
        \addplot[
            color=red,
            mark=*,
            mark options={scale=0.6},
            line width=1.pt
            ]
            coordinates {
                (0.0001000000, 0.2794835541) (0.0003162278, 0.2157572905) (0.0010000000, 0.1322409756) (0.0031622777, 0.0707212660) (0.0100000000, 0.0368006632) (0.0316227766, 0.0179996954) (0.1000000000, 0.0093116505) (0.3162277660, 0.0061501840) (1.0000000000, 0.0048785609) (3.1622776602, 0.0039635924) (10.0000000000, 0.0075405178) 
            }; 

        \addplot[
            color=brown,
            mark=*,
            mark options={scale=0.6},
            line width=1.pt
            ]
            coordinates {
                (0.0001000000, 0.2549345095) (0.0003162278, 0.1780059500) (0.0010000000, 0.0999368014) (0.0031622777, 0.0520582627) (0.0100000000, 0.0256603701) (0.0316227766, 0.0116425859) (0.1000000000, 0.0057325934) (0.3162277660, 0.0037056295) (1.0000000000, 0.0029216546) (3.1622776602, 0.0023726302) (10.0000000000, 0.0044774369) 
            }; 

        \addplot[
            color=blue,
            mark=none,
            dashed
            ]
            coordinates {
                (0.0001000000, 0.0121773052) (10, 0.0121773052)
            };
        \addplot[
            color=brown,
            mark=none,
            dashed
            ]
            coordinates {
                (0.0001000000, 0.0030221421) (10, 0.0030221421)
            };

        \node [blue, rotate=0] at (rel axis cs: 0.9, 0.48) { $h=\frac{1}{10}$ };
        \node [orange, rotate=0] at (rel axis cs: 0.9,0.40) { $h=\frac{1}{15}$ };
        \node [red, rotate=0] at (rel axis cs: 0.9,0.32) { $h=\frac{1}{20}$ };
        \node [brown, rotate=0] at (rel axis cs: 0.9,0.24) { $h=\frac{1}{25}$ };

        \node [blue, align=center] at (rel axis cs: 0.15,0.47) { reference \\ $h=\frac{1}{10}$ };
        \node [brown] at (rel axis cs: 0.22,0.25) { reference $h = \frac{1}{25}$ };


        \end{axis}
        \end{tikzpicture}%
        \begin{tikzpicture}
            \begin{axis}[
                xlabel={$C$},
                ylabel={3D, $|u - \tilde{u}_h|_{H^1(\Omega \setminus \B)}$},
                xtick={1, 0.01, 0.0001, 1e-06},
                ytick={0.01, 0.1, 1, 10},
                xmin=5e-5,  xmax=200,
                ymin=0.25,
                ymax = 5.5,
                legend pos=north west,
                ymajorgrids=true,
                grid=none,
                xmode=log,
                ymode=log,
                width=0.56\textwidth,
                height=0.47\textwidth,
                x label style={at={(axis description cs:0.5,-0.0)},anchor=north},
                y label style={at={(axis description cs:0.25,0.85)},rotate=-90,anchor=south},
                xticklabel style={above, yshift=1ex, xshift=1ex},
            ]
            
            \addplot[
                color=blue,
                mark=*,
                mark options={scale=0.6},
                line width=1.pt
                ]
                coordinates {
                    (0.0001000000, 3.2865509926) (0.0003162278, 3.0900254557) (0.0010000000, 2.6697956857) (0.0031622777, 2.0742797428) (0.0100000000, 1.5300362552) (0.0316227766, 1.1484518535) (0.1000000000, 0.9231073031) (0.3162277660, 0.8219058938) (1.0000000000, 0.8025508886) (3.1622776602, 0.8593950703) (10.0000000000, 1.0757805021) 
                }; 
                
            \addplot[
                color=orange,
                mark=*,
                mark options={scale=0.6},
                line width=1.pt
                ]
                coordinates {
                    (0.0001000000, 3.1952200753) (0.0003162278, 2.9223229361) (0.0010000000, 2.4110123993) (0.0031622777, 1.7961620992) (0.0100000000, 1.2902215978) (0.0316227766, 0.9450455092) (0.1000000000, 0.7469141039) (0.3162277660, 0.6609103942) (1.0000000000, 0.6440980491) (3.1622776602, 0.6877635898) (10.0000000000, 0.8494745354)
                }; 
            
            \addplot[
                color=red,
                mark=*,
                mark options={scale=0.6},
                line width=1.pt
                ]
                coordinates {
                    (0.0001000000, 3.0963607273) (0.0003162278, 2.7357584645) (0.0010000000, 2.1534598419) (0.0031622777, 1.5505249092) (0.0100000000, 1.0876607857) (0.0316227766, 0.7779154890) (0.1000000000, 0.6057156400) (0.3162277660, 0.5330442055) (1.0000000000, 0.5186544091) (3.1622776602, 0.5529982550) (10.0000000000, 0.6778724288) 
                }; 
    
            \addplot[
                color=brown,
                mark=*,
                mark options={scale=0.6},
                line width=1.pt
                ]
                coordinates {
                    (0.0001000000, 2.9524442035) (0.0003162278, 2.4810781540) (0.0010000000, 1.8529729778) (0.0031622777, 1.2925123917) (0.0100000000, 0.8823587010) (0.0316227766, 0.6152982514) (0.1000000000, 0.4727683084) (0.3162277660, 0.4142957341) (1.0000000000, 0.4026394736) (3.1622776602, 0.4286515803) (10.0000000000, 0.5219284287) 
                }; 

            \addplot[
                color=blue,
                mark=none,
                dashed
                ]
                coordinates {
                    (0.0001000000, 0.7503042081) (10, 0.7503042081)
                };
            \addplot[
                color=brown,
                mark=none,
                dashed
                ]
                coordinates {
                    (0.0001000000, 0.3739544150) (10, 0.3739544150)
                };

            \node [blue, rotate=0] at (rel axis cs: 0.9, 0.5) { $h=\frac{1}{10}$ };
            \node [orange, rotate=0] at (rel axis cs: 0.9,0.42) { $h=\frac{1}{15}$ };
            \node [red, rotate=0] at (rel axis cs: 0.9,0.31) { $h=\frac{1}{20}$ };
            \node [brown, rotate=0] at (rel axis cs: 0.9,0.23) { $h=\frac{1}{25}$ };

            \node [blue, align=center] at (rel axis cs: 0.15,0.34) { reference \\ $h=\frac{1}{10}$ };
            \node [brown] at (rel axis cs: 0.22,0.18) { reference $h = \frac{1}{25}$ };

    
            \end{axis}
            \end{tikzpicture}
            \caption{Effect of $C$ on the error in the $L^2$ norm (left) and the $H^1$ semi-norm (right) in 3D.}
            \label{fig:coeff_3D}
\end{figure}
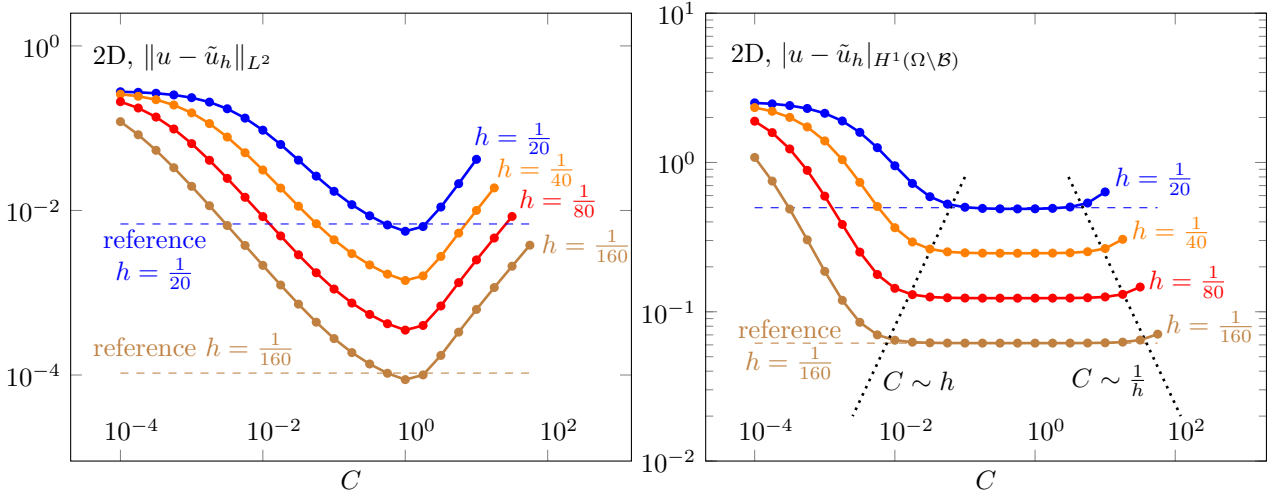
\\
\subsection{Size effect related to the degenerate band/surface} \label{sec:L_effect}
To verify the dependence on $L$, the length of the band of caps as represented in Figure \ref{fig:2D_band}, we employed a manufactured solution with several periods represented within the domain. By selecting lengths $L$ that are multiples of this period, we tested the dependence of the optimal choice of $C$ and the error on $L$ without altering the function represented on the band of caps. 

This strategy allows one to discriminate effects related to the first and second derivatives of the solution.\\
\input{L_effect.tex}
As we can see on the Figure \ref{fig:L_effect}, the length of the band in 2D does not influence the value of the optimal $C$ for both the $L^2$ and $H^1$ errors. This is in agreement with Theorem \ref{thm:FEM_modified_band_opt}. Additionally, if $C$ is chosen optimally, the error doesn't vary much with the band length. When $C$ is chosen too small and the solution presents locking, we observe that the error varies as $\sim \sqrt{L}$. Moreover, a shorter band results in a flatter profile of the curves around the minimum of the curve, indicating a plateau. Again, this plateau provides greater robustness in the choice of $C$, allowing for a wider range of acceptable values.\vspace{0.4cm}\\
\input{S_effect.tex}
In the 3D case, the area $S$ of the degenerate surface affects the choice of the optimal $C$, as illustrated in Figure \ref{fig:S_effect}. However, when $C$ is chosen optimally, the error exhibits minimal variation with respect to $S$.

\subsection{Conditioning} \label{sec:cond}
\begin{figure}[!ht]
    \hspace{-0.75cm}
\begin{tikzpicture}

\begin{axis}[
    width=10cm,
    height=5cm,
    thick,
    legend pos=north east,
    ymode=log,
    xlabel={Iteration},
    ylabel={Residual $\|r_k\|/ \|r_0\|$},
    grid=major,
    ymin=1e-11,
    ymax=1e1,
    xmin=-50,
    xmax=1500,
    width=0.75\textwidth,
    height=0.47\textwidth,
    y label style={at={(axis description cs:0.25,0.85)},rotate=-90,anchor=south},
]
\addplot[blue, thick] table[x index=0, y index=1] {data/CG_alone_D100.tex};
\addplot[orange, thick] table[x index=0, y index=1] {data/CG_alone_D1000.tex};
\addplot[red, thick] table[x index=0, y index=1] {data/CG_alone_D10000.tex};
\addplot[brown, thick] table[x index=0, y index=1] {data/CG_alone_D100000.tex};
\addplot[purple, thick] table[x index=0, y index=1] {data/CG_alone_D1000000.tex};

\node [blue, rotate=0] at (rel axis cs: 0.1, 0.15) { $D=10^{-2}$ };
\node [orange, rotate=0] at (rel axis cs: 0.45, 0.15) { $D=10^{-3}$ };
\node [red, rotate=0] at (rel axis cs: 0.7, 0.15) { $D=10^{-4}$ };
\node [purple, rotate=0] at (rel axis cs: 0.82, 0.45) { $D=10^{-5}$, $D=10^{-6}$};

\end{axis}
\node at (13,3) {\begin{TAB}(r,0.5cm,0.8cm)[5pt]{c|c|c}{c|c|c|c|c|c}
    $D$ & $\kappa$ & $H_1$ error \\ 
    $10^{-2}$ & 3096   & 12\%   \\ 
    $10^{-3}$ & 10027  & 12\%   \\ 
    $10^{-4}$ & 96255  & 13\%    \\ 
    $10^{-5}$ & 120200 & 14\%    \\ 
    $10^{-6}$ & 120201 & 14\%    \\
\end{TAB}};

\node at (7.5, 6.25) {Conjugate Gradient without preconditioner};

\end{tikzpicture}
\caption{This Figure shows iteration history of conjugate gradients to solve a problem with TFEM. The conditioning $\kappa$ varies as $1/D$ for $10^{-4} \leq D \leq 10^{-2}$ (no TFEM). TFEM is triggered for $D < 10^{-4}$ and conditioning remains stable. \label{fig:cond_no_prec}}
\end{figure}
\begin{figure}[!ht]
    \hspace{-0.75cm}
\begin{tikzpicture}

\begin{axis}[
    width=10cm,
    height=5cm,
    thick,
    legend pos=north east,
    ymode=log,
    xlabel={Iteration},
    ylabel={Residual $\|r_k\|/ \|r_0\|$},
    grid=major,
    ymin=1e-12,
    ymax=1e1,
    xmin=-5,
    xmax=125,
    width=0.75\textwidth,
    height=0.47\textwidth,
    y label style={at={(axis description cs:0.25,0.85)},rotate=-90,anchor=south},
]
\addplot[blue, thick] table[x index=0, y index=1] {data/CG_ic0_D100.tex};
\addplot[orange, thick] table[x index=0, y index=1] {data/CG_ic0_D1000.tex};
\addplot[red, thick] table[x index=0, y index=1] {data/CG_ic0_D10000.tex};
\addplot[brown, thick] table[x index=0, y index=1] {data/CG_ic0_D100000.tex};
\addplot[purple, thick] table[x index=0, y index=1] {data/CG_ic0_D1000000.tex};

\node [blue, rotate=0] at (rel axis cs: 0.62, 0.25) { $D=10^{-2}$ };
\node [orange, rotate=0] at (rel axis cs: 0.90, 0.07) { $D=10^{-3}$ };
\node [purple, rotate=0] at (rel axis cs: 0.85, 0.6) { $D=10^{-4}$,};
\node [purple, rotate=0] at (rel axis cs: 0.85, 0.5) { $D=10^{-5}$,};
\node [purple, rotate=0] at (rel axis cs: 0.85, 0.4) { $D=10^{-6}$\,};

\end{axis}
\node at (13,3) {\begin{TAB}(r,0.5cm,0.8cm)[5pt]{c|c|c}{c|c|c|c|c|c}
    $D$ & $\kappa$ & $H_1$ error \\ 
    $10^{-2}$ & 3096   & 12\%   \\ 
    $10^{-3}$ & 10027  & 12\%   \\ 
    $10^{-4}$ & 96255  & 13\%    \\ 
    $10^{-5}$ & 120200 & 14\%    \\ 
    $10^{-6}$ & 120201 & 14\%    \\
\end{TAB}};

\node at (7.5, 6.25) {Conjugate Gradient with Incomplete Cholesky preconditioner};

\end{tikzpicture}
\caption{This Figure shows the same 
    sequence of meshes with a decreasing band of cap width as Figure \ref{fig:cond_no_prec} but using preconditioned conjugate gradients. 
    There, TFEM has no influence on the iterative solver's convergence. \label{fig:cond_ic0}}
\end{figure}

It is perfectly legitimate to question the possibility of a deterioration in the performance of linear solvers when introducing elements of quasi-zero measure, which necessarily lead to the appearance of very large eigenvalues in $K$. More precisely, in the case of a second-order elliptic operator such as the Laplacian, one expects that the largest eigenvalue of $K$, and thus its conditioning $\kappa$, grows as 
\[
\kappa = \mathcal{O}(1/D),
\] 
where $D$ denotes the thickness of the band of caps. Figure~\ref{fig:cond_no_prec} illustrates the evolution of the residual during the iterations of an unpreconditioned Conjugate Gradient applied to the matrix $K$ with TFEM. In this particular case, TFEM is triggered for the two smallest values of $D$, i.e., $D=10^{-5}$ and $D=10^{-6}$. The number of iterations is approximately multiplied by $\sqrt{10}$ when decreasing $D$ from $10^{-2}$ to $10^{-3}$, and from $10^{-3}$ to $10^{-4}$, which corresponds to the theoretical worst-case scenario. On the other hand, a simple preconditioner of the incomplete Cholesky type, which is commonly used to solve this class of problems, proves to be perfectly sufficient to achieve convergence independent of $\kappa$. This is excellent news: the use of TFEM does not introduce any specific difficulty regarding the solution of linear systems.

\section{Theory behind the TFEM scheme} \label{section:theoretical}
In the previous section we showed that a simple numerical trick allowed us to recover the convergence rate of the finite element method on meshes with a band of caps. In this section we demonstrate this convergence for the choice of $\Jmin \sim h^{d+1}$ in $\mathbb{R}^d$.  To do so, we need to consider two cases: when the band of caps is not exactly degenerate but tends to a zero width, and the exactly degenerated case. For the first case, we do a classic finite element error analysis where we separate the error made on the band and the rest of the domain and we estimate and balance these carefully. In the second case for exactly degenerate bands of caps, we demonstrate that our scheme corresponds to an under-integrated mortar scheme with penalization of the jump across the band. In order to make this jump converge to zero as the mesh is refined, we obtain an expression the choice of $\Jmin$. 

In order to simplify the analysis, we consider the case of homogeneous Dirichlet boundary conditions ($u_D=0$). Considering a nonhomogeneous Dirichlet condition would only add a level of technicalities unrelated to the TFEM method itself (Dirichlet lift, error of the interpolation of $u_D$ on the boundary, etc.) which would make the analysis less clear and readable. In the notation of Section \ref{section:FEM} we therefore have $V_h=V_h^0=X_h\cap H^1_0(\Omega)$ in the following analysis.

\subsection{TFEM for a band of arbitrarily small but non-zero caps}
\label{section:theoretical1}

By applying the circumradius estimate of the Lagrange interpolation error and using the scaling of the band, we are able to bound the error $|u-\tu_h|_{H^1}$ in the $H^1$ semi-norm. This requires a careful decomposition and balancing of the error. The main difficulty is that the method (\ref{eq:FEM_mod}) is not consistent in the sense that we are essentially solving a modified continuous problem (with diffusion coefficient $d(x)$ instead of 1), hence we lack so-called Galerkin orthogonality. Nevertheless this obstacle can be overcome. The analysis requires three main working tools/lemmas.

\begin{lemma}[Circumradius estimate \cite{kobayashi2015circumradius}]
\label{lem:circum}
Let $K\subset\IR^2$ be an arbitrary triangle. Let $u\in \WIIi(K)$ and let $\Pi_K u$ be the linear Lagrange interpolation of $u$ on $K$. Then there exists a constant $C_c$ independent  of $u$ and $K$ such that
\begin{equation}
|u-\Pi_K u|_{\WIi(K)}\leq C_c R_K|u|_{\WIIi(K)} \leq C_c \frac{h^2}{\width}|u|_{\WIIi(K)}.
\label{lem:circum:est}
\end{equation}
where $R_K$ is the circumradius of the $K$ triangle.
\end{lemma}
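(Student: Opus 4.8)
The plan is to handle the two inequalities of \eqref{lem:circum:est} separately. The right-hand one, $R_K\le h^2/\width$, is elementary: writing $R_K=abc/(4|K|)$ with $a,b,c$ the side lengths of $K$, each of $a,b,c$ is at most the diameter $h$, and, taking $\width$ as the height of $K$ over its longest side, $|K|=\tfrac12 h\width$, so $R_K\le h^{3}/(2h\width)=h^{2}/(2\width)$. The left-hand inequality is the circumradius interpolation estimate of Kobayashi and Tsuchiya \cite{kobayashi2015circumradius}; I would obtain it through the following short, coordinate-free argument, which makes transparent why the circumradius is the quantity that controls the bound.

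First I would reduce the claim to a bound on $\nabla(u-\Pi_K u)$ at a single point. Since $\Pi_K u$ is affine, $D^{2}(u-\Pi_K u)=D^{2}u$, so integrating along the segment $[x_0,x]\subset K$ gives $|\nabla(u-\Pi_K u)(x)|\le|\nabla(u-\Pi_K u)(x_0)|+h\,|u|_{\WIIi(K)}$ for all $x,x_0\in K$; moreover $h=\diam K\le 2R_K$ because $K$ lies inside its circumscribed disk. Hence it suffices to exhibit one point $x_0\in K$ with $|\nabla(u-\Pi_K u)(x_0)|\le 2R_K\,|u|_{\WIIi(K)}$. To produce it, let $\Theta$ be the largest angle of $K$, attained at a vertex $p_0$, and let $e_1,e_2$ be the two edges of $K$ meeting at $p_0$. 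On each $e_i$ the function $u-\Pi_K u$ is $C^{1}$ up to the boundary (using $\WIIi(K)=C^{1,1}(\overline K)$ on the convex set $K$) and vanishes at both endpoints, so Rolle's theorem furnishes a point $\zeta_i\in e_i$ at which the tangential derivative of $u-\Pi_K u$ vanishes, i.e.\ $\nabla(u-\Pi_K u)(\zeta_i)=\alpha_i\,\hat n_i$ for the unit normal $\hat n_i$ to $e_i$ and some scalar $\alpha_i$.

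The one delicate step is to read off $\alpha_1$. Applying the transport estimate above between $\zeta_1$ and $\zeta_2$ gives $|\alpha_1\hat n_1-\alpha_2\hat n_2|\le h\,|u|_{\WIIi(K)}$; taking the scalar product with the unit tangent $\hat t_2$ of $e_2$ annihilates the $\alpha_2$-term and leaves $|\alpha_1|\,|\hat t_2\!\cdot\!\hat n_1|\le h\,|u|_{\WIIi(K)}$. Now $|\hat t_2\!\cdot\!\hat n_1|=\sin\Theta$, and the law of sines applied to the side opposite $\Theta$ — which is the longest side, of length $h$ — gives $h/\sin\Theta=2R_K$, so $|\nabla(u-\Pi_K u)(\zeta_1)|=|\alpha_1|\le 2R_K\,|u|_{\WIIi(K)}$. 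Feeding this back into the transport estimate with $x_0=\zeta_1$ yields $|u-\Pi_K u|_{\WIi(K)}\le 4R_K\,|u|_{\WIIi(K)}$, which together with $R_K\le h^2/(2\width)$ gives \eqref{lem:circum:est} with $C_c=4$. The only place where the shape of $K$ enters is the factor $1/\sin\Theta$, which the law of sines turns exactly into $R_K$; this is precisely why the bound deteriorates only as $R_K\to\infty$ and is insensitive to the smallest angle, or even to the largest angle in isolation. I do not expect any genuine obstacle beyond that step: the remaining items — the identification $\WIIi(K)=C^{1,1}(\overline K)$ that legitimizes pointwise gradients and Rolle's theorem (available here only because we work in the $L^\infty$ scale; the analogous $H^1$–$H^2$ estimate needs a different argument), the fundamental theorem of calculus for the Lipschitz field $\nabla u$ along segments, and the fixed factor relating $|u|_{\WIIi(K)}$ to $\sup_K|D^2u|$ in whichever matrix norm one fixes (which merely rescales $C_c$) — are routine.
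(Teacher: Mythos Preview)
Your proof is correct. The paper itself does not prove this lemma: it is stated with a citation to Kobayashi--Tsuchiya and used as a black box, so there is no ``paper's own proof'' to compare against. Your argument is a clean, self-contained derivation: Rolle's theorem on the two edges meeting at the maximum-angle vertex gives gradient information in two independent directions, and the law of sines converts the resulting $1/\sin\Theta$ factor exactly into $2R_K$. The transport estimate and the elementary bound $R_K\le h^2/(2\width)$ are handled correctly. Incidentally, your constant $C_c=4$ (up to norm conventions on $|u|_{\WIIi}$) is slightly sharper than the value $2\sqrt{2}+2$ that the paper later quotes from \cite{Kucera_circumradius}.
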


\begin{lemma}[Maximum-angle condition \cite{babuvska1976angle}]
\label{lem:Max_angle_cond}
Let $K\subset\IR^2$ be a triangle satisfying the maximum angle condition: $\alpha_K\leq\alpha_0<\pi$ for some fixed $\alpha_0$. Let $u\in H^2(K)$ and let $\Pi_K u$ be the linear Lagrange interpolation of $u$ on $K$. Then there exists a constant $C_I$ independent of $u$ (but depending on the maximum angle $\alpha_0$) such that
\begin{equation}
|u-\Pi_K u|_{H^1(K)}\leq C_I h|u|_{H^2(K)}.
\label{lem:Max_angle_cond:est}
\end{equation}
\end{lemma}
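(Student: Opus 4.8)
The plan is to pull $K$ back to a fixed reference triangle and use an \emph{anisotropic} interpolation estimate there. The classical Bramble--Hilbert-plus-scaling argument is deliberately avoided: the factor $\|B\|^2\|B^{-1}\|$ it produces behaves like $h^2/\min(\beta_1,\beta_2)$ (with $\beta_1,\beta_2$ the two edge lengths at a vertex) and blows up for needle-shaped elements, which nonetheless satisfy the maximum angle condition. Let $\hat T$ be the unit right triangle with vertices $(0,0),(1,0),(0,1)$ and $\hat\Pi$ the linear Lagrange interpolant on it (note $H^2(\hat T)\hookrightarrow C(\overline{\hat T})$, so this makes sense). The first step is the pair of estimates
\[
\|\partial_i(\hat v-\hat\Pi\hat v)\|_{L^2(\hat T)}\le C\,\|\nabla\partial_i\hat v\|_{L^2(\hat T)},\qquad i=1,2,\ \ \hat v\in H^2(\hat T).
\]
These are the standard directional refinement of Bramble--Hilbert: besides affine functions, $\partial_i(\mathrm{Id}-\hat\Pi)$ also annihilates every function of $x_{3-i}$ alone, because on this particular reference triangle the linear interpolant of such a function is again a function of $x_{3-i}$ only. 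A Deny--Lions/Peetre--Tartar argument on $H^2(\hat T)$ with the seminorm $\|\nabla\partial_i\cdot\|_{L^2(\hat T)}$, whose kernel is exactly $\{ax_i+g(x_{3-i})\}$, then delivers the bound.

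Next I would set up the affine map carefully. Label the vertices of $K$ so that $P_1$ carries the \emph{largest} angle $\theta_1=\alpha_K$, and take $F(\hat x)=B\hat x+P_1$ with $B=[\,P_2-P_1\mid P_3-P_1\,]$, so $F(\hat T)=K$. Put $v=\hat v\circ F^{-1}$, so that $v-\Pi_K v=(\hat v-\hat\Pi\hat v)\circ F^{-1}$. The chain rule gives $\nabla_x(v-\Pi_K v)=B^{-T}\nabla_{\hat x}(\hat v-\hat\Pi\hat v)$ and $\nabla_{\hat x}\partial_i\hat v=B^{T}\sum_j B_{ji}\nabla_x\partial_j v$. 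Combining these with the reference estimates, the area relation $\mathrm{d}x=|\det B|\,\mathrm{d}\hat{x}$, and Cauchy--Schwarz in the matrix indices, all powers of $|\det B|$ cancel and one is left with
\[
|v-\Pi_K v|_{H^1(K)}\le C\,\|B\|\Big(\sum_{i=1}^2\gamma_i^2\beta_i^2\Big)^{1/2}|v|_{H^2(K)},
\]
where $\beta_i$ is the length of the $i$-th column of $B$ and $\gamma_i$ that of the $i$-th row of $B^{-1}$. The essential point of Step~1 is that it forces $\gamma_i$ to be paired with $\beta_i$ of the \emph{same} index.

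The crux is then elementary geometry. The first row of $B^{-1}$ is a multiple of $(P_3-P_1)^\perp$ and the second of $(P_2-P_1)^\perp$, each of length $\gamma_i$, so $\gamma_1\beta_1=\gamma_2\beta_2=\beta_1\beta_2/|\det B|=1/\sin\theta_1$, using $|\det B|=\beta_1\beta_2\sin\theta_1$. Since $\beta_1,\beta_2\le h$ yields $\|B\|\le\sqrt2\,h$, the previous display becomes $|v-\Pi_K v|_{H^1(K)}\le C\,h\,(\sin\theta_1)^{-1}|v|_{H^2(K)}$. Finally, the largest angle of any triangle lies in $[\pi/3,\pi)$, and by hypothesis $\theta_1\le\alpha_0$, so $\theta_1\in[\pi/3,\alpha_0]$, on which $\sin$ is bounded below by $c(\alpha_0):=\min(\sqrt3/2,\sin\alpha_0)>0$. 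Absorbing $c(\alpha_0)^{-1}$ into the constant gives $|u-\Pi_K u|_{H^1(K)}\le C_I\,h\,|u|_{H^2(K)}$ with $C_I=C_I(\alpha_0)$, as claimed.

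The main obstacle is Step~1, together with the matching vertex labelling in Step~2. The naive isotropic bound $\|\partial_i(\hat v-\hat\Pi\hat v)\|_{L^2(\hat T)}\le C|\hat v|_{H^2(\hat T)}$ is easy but useless here: through the change of variables it produces $\|B\|^2\|B^{-1}\|$, which diverges precisely for the admissible needle-like $K$. Only the directional version --- which keeps each $\gamma_i$ tied to the corresponding $\beta_i$ and thereby converts the dangerous short edge into the harmless quantity $1/\sin\theta_1$ --- makes the argument go through; making that refinement fully rigorous (in particular the generalized Poincar\'e inequality with the non-polynomial kernel $\{ax_i+g(x_{3-i})\}$, for which one uses a compactness/density argument or an explicit Taylor-remainder representation of $\hat v-\hat\Pi\hat v$) is the one genuinely delicate point. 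Everything else is bookkeeping and trigonometry.
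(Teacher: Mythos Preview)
The paper does not prove this lemma; it is stated as a cited working tool from Babu\v{s}ka--Aziz (1976), so there is nothing to compare against. Your proof is a correct reconstruction of the classical argument via anisotropic reference estimates: the directional bounds $\|\partial_i(\hat v-\hat\Pi\hat v)\|_{L^2(\hat T)}\le C\|\nabla\partial_i\hat v\|_{L^2(\hat T)}$ are exactly what is needed to pair each $\gamma_i$ with the matching $\beta_i$, and your geometric identity $\gamma_i\beta_i=1/\sin\theta_1$ together with $\theta_1\in[\pi/3,\alpha_0]$ delivers the $\alpha_0$-dependent constant cleanly. You are also right to flag the infinite-dimensional kernel $\{ax_i+g(x_{3-i})\}$ in Step~1 as the one genuinely delicate point; the standard fix is the explicit integral Taylor-remainder representation of $\hat v-\hat\Pi\hat v$ rather than an abstract compactness argument, since Peetre--Tartar in its usual form requires a finite-dimensional kernel.
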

\begin{lemma}[Young's inequality]
\label{lem:Young} Let $a,b\in\IR$ and $r>0$. Then
\begin{equation}
\label{eq:Young}
ab\leq \frac{a^2}{2r}+\frac{rb^2}{2}.
\end{equation}
\end{lemma}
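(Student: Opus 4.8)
The plan is to reduce \eqref{eq:Young} to the case of nonnegative arguments and then to read it off from the nonnegativity of a perfect square. First I would note that the right-hand side of \eqref{eq:Young} is invariant under the substitutions $a\mapsto|a|$ and $b\mapsto|b|$, whereas the left-hand side can only increase under them, since $ab\leq|ab|=|a|\,|b|$. Hence it suffices to establish the sharper-looking inequality $|a|\,|b|\leq\frac{a^{2}}{2r}+\frac{rb^{2}}{2}$, i.e.\ one may assume $a,b\geq 0$ from the outset.

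For nonnegative $a,b$, the one genuine step is to expand the square
\begin{equation*}
0\leq\left(\frac{a}{\sqrt{r}}-\sqrt{r}\,b\right)^{2}=\frac{a^{2}}{r}-2ab+rb^{2},
\end{equation*}
which is meaningful precisely because $r>0$. Rearranging gives $2ab\leq\frac{a^{2}}{r}+rb^{2}$, and dividing by $2$ — again legitimate since $r>0$, so the order is preserved — yields the claim. There is no real obstacle in this proof: the only points deserving a moment's care are the reduction to $ab\geq0$, so that the estimate is not vacuously weakened in the case $ab<0$, and the hypothesis $r>0$, which is exactly what is needed both for $\sqrt{r}$ to be defined and for the final division to be order-preserving. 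An equivalent route is to invoke the arithmetic--geometric mean inequality $xy\leq\frac{x^{2}+y^{2}}{2}$ with $x=a/\sqrt{r}$ and $y=\sqrt{r}\,b$.
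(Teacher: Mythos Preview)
Your proof is correct. The paper states this lemma as a standard working tool and does not supply a proof, so there is no paper argument to compare against; your perfect-square expansion is the canonical justification. One small remark: the preliminary reduction to $a,b\geq 0$ is unnecessary, since the expansion $0\leq\bigl(a/\sqrt{r}-\sqrt{r}\,b\bigr)^{2}=a^{2}/r-2ab+rb^{2}$ is valid for all real $a,b$ and already delivers \eqref{eq:Young} directly after rearranging.
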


Our main theoretical result is the TFEM error estimate \eqref{thm:FEM_modified:est} with a general value of $D$. We will then optimize this estimate with respect to $D$. 
\begin{theorem}
    \label{thm:FEM_modified}
    Let $u\in \WIIi(\Omega)$. Let $0<D\leq 1$ in \eqref{eq:def_D}. Then
    \begin{equation}
    \begin{split}
    \label{thm:FEM_modified:est}
    &|u-\tu_h|^2_{H^1(\Omega\setminus\B)} +D|u-\tu_h|^2_{H^1(\B)}\\
    &\quad\leq 4C_I^2h^2|u|^2_{H^2(\Omega\setminus\B)} + \frac{4}{D}|\B| |u|_{\WIi(\B)}^2 +6D|\B|C_c^2 \frac{h^4}{\width^2} |u|^2_{\WIIi(\B)}.
    \end{split}
    \end{equation}
    \end{theorem}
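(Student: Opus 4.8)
The plan is to treat \eqref{eq:FEM_mod} as a Galerkin method for a \emph{modified} bilinear form and to run a Strang-type (non-consistent Céa) argument in the associated weighted energy norm. Introduce $a_d(w,v):=\int_\Omega d(x)\,\nabla w\cdot\nabla v\dx$, so that the left-hand side of \eqref{thm:FEM_modified:est} is precisely $a_d(u-\tu_h,\,u-\tu_h)$. The TFEM solution satisfies $a_d(\tu_h,w_h)=(f,w_h)$ for all $w_h\in V_h^0$, whereas the exact solution satisfies $\int_\Omega\nabla u\cdot\nabla v\dx=(f,v)$ for all $v\in H^1_0(\Omega)$. Subtracting these two identities over $V_h^0$ and using $d\equiv 1$ on $\Omega\setminus\B$, $d\equiv D$ on $\B$, gives the perturbed Galerkin relation
\begin{equation*}
a_d(u-\tu_h,w_h)=\int_\Omega(d-1)\,\nabla u\cdot\nabla w_h\dx=-(1-D)\int_\B\nabla u\cdot\nabla w_h\dx,\qquad\forall\,w_h\in V_h^0 .
\end{equation*}
The right-hand side is the consistency error; it does not vanish, which is the crux of the proof.

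Next I would split the error with the Lagrange interpolant. Write $e:=u-\tu_h$ and $\eta:=u-\Pi_h u$, where $\Pi_h u\in V_h^0$ (legitimate since $u\in\WIIi(\Omega)\hookrightarrow C(\overline{\Omega})$ and $u|_{\partial\Omega}=0$), so that $e-\eta=\Pi_h u-\tu_h\in V_h^0$. Expanding $a_d(e,e)=a_d(e,\eta)+a_d(e,\Pi_h u-\tu_h)$ and applying the perturbed relation to the second term (with $w_h=e-\eta$) yields
\begin{equation*}
a_d(e,e)=a_d(e,\eta)-(1-D)\int_\B\nabla u\cdot\nabla e\dx+(1-D)\int_\B\nabla u\cdot\nabla\eta\dx .
\end{equation*}
Now bound the three terms. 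For the first, Cauchy--Schwarz in the $d$-weighted inner product gives $a_d(e,\eta)\le a_d(e,e)^{1/2}a_d(\eta,\eta)^{1/2}$, and Young's inequality (Lemma \ref{lem:Young}) absorbs a fixed fraction of $a_d(e,e)$ into the left-hand side, leaving $a_d(\eta,\eta)=|\eta|^2_{H^1(\Omega\setminus\B)}+D|\eta|^2_{H^1(\B)}$. For the second term, estimate $\bigl|(1-D)\int_\B\nabla u\cdot\nabla e\bigr|\le(1-D)|u|_{H^1(\B)}|e|_{H^1(\B)}$, pair $\tfrac{1-D}{\sqrt D}|u|_{H^1(\B)}$ with $\sqrt D\,|e|_{H^1(\B)}$ and apply Young's inequality; using $(1-D)^2\le1$ the coefficient of $|u|^2_{H^1(\B)}$ is $O(1/D)$, and another $\sqrt D\,|e|_{H^1(\B)}$ term is absorbed into $a_d(e,e)$ — this is the delicate step and it works only because $D>0$. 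The third term is handled by Young's inequality again, producing $O(1/D)|u|^2_{H^1(\B)}$ and $O(D)|\eta|^2_{H^1(\B)}$ contributions. Finally use $|u|^2_{H^1(\B)}=\int_\B|\nabla u|^2\le|\B|\,|u|^2_{\WIi(\B)}$.

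It remains to estimate the interpolation error on the two parts of the mesh. On $\Omega\setminus\B$ the mesh satisfies a maximum-angle condition, so Lemma \ref{lem:Max_angle_cond} gives $|\eta|_{H^1(\Omega\setminus\B)}\le C_I h|u|_{H^2(\Omega\setminus\B)}$. On each cap $K\subset\B$, Lemma \ref{lem:circum} gives $|\eta|_{\WIi(K)}\le C_c\frac{h^2}{\width}|u|_{\WIIi(K)}$, hence $|\eta|^2_{H^1(K)}\le|K|\,C_c^2\frac{h^4}{\width^2}|u|^2_{\WIIi(K)}$; summing over $K\subset\B$ with $\sum_K|K|=|\B|$ and $|u|_{\WIIi(K)}\le|u|_{\WIIi(\B)}$ gives $|\eta|^2_{H^1(\B)}\le C_c^2\frac{h^4}{\width^2}|\B|\,|u|^2_{\WIIi(\B)}$. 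Substituting these bounds into the terms collected above and tracking the Young's-inequality constants produces exactly \eqref{thm:FEM_modified:est}. The main obstacle is the absence of Galerkin orthogonality: because TFEM solves a PDE with diffusion $d(x)$ rather than $1$, $\tu_h$ is the $a_d$-projection not of $u$ but of the solution of the perturbed continuous problem, so the consistency term $-(1-D)\int_\B\nabla u\cdot\nabla(\cdot)$ must be carried through the whole argument; controlling the part of it that still depends on the unknown error $e$ — by isolating $\sqrt D\,|e|_{H^1(\B)}$ and absorbing it on the left — is what forces the weighted norm, the factor $D$, and the competing $1/D$ and $D$ weights on the right-hand side that the later optimization over $D$ (equivalently $\Jmin\sim h^{d+1}$) is designed to balance. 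An equivalent route introduces the exact solution $\tu$ of the continuous counterpart of \eqref{eq:FEM_mod}, uses genuine Céa for $\tu-\tu_h$ and a simple energy estimate for the modeling error $u-\tu$, then combines via the triangle inequality, yielding the same three terms with slightly different constants.
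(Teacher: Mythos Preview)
Your proposal is correct and follows essentially the same route as the paper: both identify the consistency defect $-(1-D)\int_\B\nabla u\cdot\nabla(\cdot)$, split the domain into $\Omega\setminus\B$ (maximum-angle estimate, Lemma~\ref{lem:Max_angle_cond}) and $\B$ (circumradius estimate, Lemma~\ref{lem:circum}), and use Young's inequality to absorb the $\sqrt D\,|{\cdot}|_{H^1(\B)}$ contributions on the left. The only organizational difference is that the paper first tests with $\xi_h=\Pi_h u-\tu_h$ to bound the discrete part and then recovers $e=u-\tu_h$ via $(a+b)^2\le 2a^2+2b^2$, whereas you go straight for $a_d(e,e)$; this yields the same three right-hand-side terms, though the precise constants $(4,\tfrac{4}{D},6D)$ of \eqref{thm:FEM_modified:est} come from the paper's two-step bookkeeping and your direct route will in general produce slightly different (comparable) numerical factors.
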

    \begin{proof}
    We subtract \eqref{eq:FEM_mod} and \eqref{eq:weak} with a test function $v=v_h\in V_h$:
    \begin{equation}
    \label{thm:FEM_modified:1}
    \int_\Omega\nabla u\cdotp\nabla v_h \dx- \int_\Omega d(x)\nabla \tu_h\cdotp\nabla v_h \dx =0.
    \end{equation}
    We define $\xi_h=\Pi_h u-\tu_h\in V_h$ and set $v_h=\xi_h$ in (\ref{thm:FEM_modified:1}):
    \begin{equation*}
    \label{thm:FEM_modified:2}
    \int_\Omega\Big(\nabla u -d(x)\nabla\Pi_h u +d(x)\nabla \Pi_h u -d(x)\nabla \tu_h\Big)\cdotp\nabla \xi_h \dx =0,
    \end{equation*}
    where we have  introduced the terms $\pm d(x)\nabla\Pi_h u$. Rearranging gives us
    \begin{equation*}
    \label{thm:FEM_modified:3}
    \int_\Omega d(x) |\nabla \xi_h|^2\dx = -\int_\Omega \Big(\nabla u -d(x)\nabla\Pi_h u\Big)\cdotp\nabla \xi_h \dx.
    \end{equation*}
    We split the integrals over $\B$ and $\Omega\setminus\B$ and take \eqref{eq:def_D} into account: $d(x) = 1$ over $\Omega \setminus \B$ and $d(x) = D$ over $\B$. This gives us:
    \begin{equation}
    \label{thm:FEM_modified:4}
    \begin{split}
    &\int_{\Omega\setminus\B} |\nabla \xi_h|^2\dx +D\int_{\B} |\nabla \xi_h|^2\dx\\
    &\quad= -\underbrace{\int_{\Omega\setminus\B} \Big(\nabla u -\nabla\Pi_h u\Big)\cdotp\nabla \xi_h \dx}_{(\star)} -\underbrace{\int_\B \Big(\nabla u -D\nabla\Pi_h u\Big)\cdotp\nabla \xi_h \dx}_{(\star\star)}.
    \end{split}
    \end{equation}
    Estimating $(\star)$ is standard, because we have `nice' elements on $\Omega\setminus\B$:
    \begin{equation}
    \begin{split}
    \label{thm:FEM_modified:5}
    \big|(\star)\big| &\leq |u-\Pi_h u|_{H^1(\Omega\setminus\B)}|\xi_h|_{H^1(\Omega\setminus\B)} \leq \tfrac{1}{2}|u-\Pi_h u|_{H^1(\Omega\setminus\B)}^2+\tfrac{1}{2}|\xi_h|_{H^1(\Omega\setminus\B)}^2\\
    &\leq \tfrac{1}{2}C_I^2h^2|u|_{H^2(\Omega\setminus\B)}^2 +\tfrac{1}{2}|\xi_h|_{H^1(\Omega\setminus\B)}^2,
    \end{split}
    \end{equation}
    where we have used Young's inequality \eqref{eq:Young} with $r=1$ and then the Maximum Angle Condition from lemma \ref{lem:circum}. We note that the last term in (\ref{thm:FEM_modified:5}) will be `hidden' under the corresponding left-hand side term of (\ref{thm:FEM_modified:4}).
    
    To estimate $(\star\star)$, we proceed more carefully since this integral is evaluated on the band. First, we have
    \begin{equation}
    \label{thm:FEM_modified:6}
    (\star\star) =\underbrace{\int_\B \Big(\nabla u -D\nabla u \Big)\cdotp\nabla \xi_h \dx}_{(A)} +\underbrace{\int_\B \Big(D\nabla u -D\nabla\Pi_h u\Big)\cdotp\nabla \xi_h \dx}_{(B)},
    \end{equation}
    where we have introduced $\pm D\nabla u$. Since $D<1$,  we can estimate $(A)$:
    \begin{equation}
    \label{thm:FEM_modified:7}
    \begin{split}
    \big|(A)\big|&= (1-D)\bigg|\int_\B \nabla u \cdotp\nabla \xi_h \dx\bigg|\leq \int_\B \big|\nabla u\big|\big|\nabla \xi_h \big|\dx\\ &\leq \frac{1}{D} \int_\B \big|\nabla u\big|^2\dx +\frac{D}{4} \int_\B \big|\nabla \xi_h \big|^2\dx\\ &\leq \frac{1}{D}|\B| |u|_{\WIi(\B)}^2 +\frac{D}{4} \int_\B \big|\nabla \xi_h \big|^2\dx
    \end{split}
    \end{equation}
    where we have used Young's inequality \eqref{eq:Young} with $r=\tfrac{D}{2}$ and taken the maximum of the first derivative on the band $|u|_{\WIi(\B)}$ to obtain an upper bound. Again, the last term in (\ref{thm:FEM_modified:7}) will be `hidden' under the corresponding left-hand side term of (\ref{thm:FEM_modified:4}).
    
    Finally, it remains to estimate term $(B)$ from (\ref{thm:FEM_modified:6}):
    \begin{equation}
    \begin{split}
    \label{thm:FEM_modified:8}
    \big|(B)\big| &\leq D\int_\B \big|\nabla u -\nabla\Pi_h u\big|\big|\nabla \xi_h\big| \dx\\ &\leq D\int_\B \big|\nabla u -\nabla\Pi_h u\big|^2\dx +\frac{D}{4}\int_\B\big|\nabla \xi_h\big|^2 \dx\\
    &\leq D|\B||u-\Pi_h u|^2_{\WIi(\B)} +\frac{D}{4}|\xi_h|_{H^1(\B)}^2\\ 
    &\leq D|\B|C_c^2 \bigg(\frac{h^2}{\width}\bigg)^2 |u|^2_{\WIIi(\B)} +\frac{D}{4}|\xi_h|_{H^1(\B)}^2
    \end{split}
    \end{equation}
    where we have used Young's inequality \eqref{eq:Young} with $r=\tfrac{1}{2}$ and the circumradius estimate (\ref{lem:circum:est}) in which the maximum of the second derivative on the band, $|u|_{\WIIi(\B)}$,  appears. As usual, the last term in (\ref{thm:FEM_modified:8}) will be `hidden' under the left-hand side of (\ref{thm:FEM_modified:4}).
    
    Combining estimates (\ref{thm:FEM_modified:4}), (\ref{thm:FEM_modified:5}), and (\ref{thm:FEM_modified:6})--(\ref{thm:FEM_modified:8}) gives us 
    \begin{equation}
    \label{thm:FEM_modified:9}
    \begin{split}
    &\frac{1}{2}|\xi_h|^2_{H^1(\Omega\setminus\B)} +\frac{D}{2}|\xi_h|^2_{H^1(\B)}\\
    &\quad\leq  \frac{1}{2}C_I^2h^2|u|_{H^2(\Omega\setminus\B)}^2 +\frac{1}{D}|\B| |u|_{\WIi(\B)}^2 +D|\B|C_c^2 \frac{h^4}{\width^2} |u|^2_{\WIIi(\B)}.
    \end{split}
    \end{equation}
    This gives us an estimate for $\xi_h = \Pi_h u - \tu_h$. Now we estimate the error $u-\tu_h$ using the fact that $u-\tu_h =(u-\Pi_h u)+\xi_h$:
    \begin{equation}
    \begin{split}
    \label{thm:FEM_modified:11}
    &|u-\tu_h|^2_{H^1(\Omega\setminus\B)} +D|u-\tu_h|^2_{H^1(\B)}\\ &\leq \big(|u-\Pi_h u|_{H^1(\Omega\setminus\B)} + |\xi_h|_{H^1(\Omega\setminus\B)}\big)^2 +D\big(|u-\Pi_h u|_{H^1(\B)} +|\xi_h|_{H^1(\B)}\big)^2\\
    &\leq 2|u-\Pi_h u|_{H^1(\Omega\setminus\B)}^2 + 2|\xi_h|_{H^1(\Omega\setminus\B)}^2 +2D|u-\Pi_h u|_{H^1(\B)}^2 +2D|\xi_h|_{H^1(\B)}^2,
    \end{split}
    \end{equation}
    where we have used the fact that $(a+b)^2\leq 2a^2+2b^2$. The $\xi_h$-terms in (\ref{thm:FEM_modified:11}) are estimated using (\ref{thm:FEM_modified:9}). The remaining terms have already been dealt with in (\ref{thm:FEM_modified:5}) and (\ref{thm:FEM_modified:8}), respectively:
    \begin{equation}
    \label{thm:FEM_modified:12}
    |u-\Pi_h u|_{H^1(\Omega\setminus\B)}^2\leq C_I^2h^2|u|^2_{H^2(\Omega\setminus\B)},
    \end{equation}
    since $\Omega\setminus\B$ contains only `nice' elements, and
    \begin{equation}
    \label{thm:FEM_modified:13}
    D|u-\Pi_h u|^2_{H^1(\B)} \leq D|\B||u-\Pi_h u|^2_{\WIi(\B)}\leq D|\B| C_c^2 \frac{h^4}{\width^2}|u|^2_{\WIIi(\B)},
    \end{equation}
    where we took the maximum of the gradient of $u - \Pi_h u$ over the band $\B$ and used the circumradius estimate from lemma \ref{lem:circum}.
    Using (\ref{thm:FEM_modified:9}), (\ref{thm:FEM_modified:12}), and (\ref{thm:FEM_modified:13}) in (\ref{thm:FEM_modified:11}) gives us the error estimate 
    \begin{equation*}
    \begin{split}
    \label{thm:FEM_modified:14}
    &|u-\tu_h|^2_{H^1(\Omega\setminus\B)} +D|u-\tu_h|^2_{H^1(\B)}\\ &\leq 4C_I^2h^2|u|^2_{H^2(\Omega\setminus\B)} + \frac{4}{D}|\B| |u|_{\WIi(\B)}^2 +6D|\B|C_c^2 \frac{h^4}{\width^2} |u|^2_{\WIIi(\B)},
    \end{split}
    \end{equation*}
    which is our desired estimate.
    \end{proof}
    
    \begin{remark}
    \label{rem:FEM}
        We note that for $D=1$ (i.e. standard FEM), the second right-hand side term in (\ref{thm:FEM_modified:est}) vanishes, since this comes from the consistency term $(A)$ which is zero in this case, cf. (\ref{thm:FEM_modified:7}). On the other hand, the last right-hand side term in (\ref{thm:FEM_modified:est}) is $O(h^2)$ if and only if $\width\ge Ch^2$ for some constant $C$. This is due to the fact that $|\B|\frac{h^4}{\width^2}=\width L\frac{h^4}{\width^2}=L\frac{h^2}{\width}h^2$. This means that if $\width\ge Ch^2$ then standard finite elements have optimal $O(h)$ convergence in the $H^1(\Omega)$-seminorm.
    \end{remark}
    
    Now we will optimize the value of $D$ in the case of a band of caps. The value of $D$ appears both in the numerator and the denominator of the error estimate of Theorem \ref{thm:FEM_modified}. The following lemma optimizes the value of such a function:
    
    \begin{lemma}
    \label{lem:opt}
    Let $a,b>0$ and
    \begin{equation}
    \label{lem:opt_g}
    g(x)=\frac{a}{x}+bx.
    \end{equation}
    Then $g$ attains its minimum at a unique $x_{\min}$, where
    \begin{equation}
    \label{lem:opt_eq}
    x_{\min}=\sqrt{\frac{a}{b}},\quad g(x_{\min})=2\sqrt{ab}.
    \end{equation}
    \end{lemma}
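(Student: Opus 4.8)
The plan is to treat this as an elementary optimization on the open half-line $x>0$, using either the AM--GM inequality or a derivative computation; AM--GM gives the cleanest route. First I would apply Young's inequality (Lemma \ref{lem:Young}), or equivalently AM--GM, to the two positive quantities $a/x$ and $bx$. Writing $g(x)=\frac{a}{x}+bx$ as a sum of two positive terms whose product is $\frac{a}{x}\cdot bx = ab$ (independent of $x$), we get
\begin{equation*}
g(x)=\frac{a}{x}+bx\geq 2\sqrt{\frac{a}{x}\cdot bx}=2\sqrt{ab},
\end{equation*}
so $2\sqrt{ab}$ is a lower bound for $g$ on $(0,\infty)$. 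This establishes the value claimed for $g(x_{\min})$ provided the bound is attained.

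Next I would identify when equality holds. Equality in AM--GM holds precisely when the two terms are equal, i.e. $\frac{a}{x}=bx$, which for $x>0$ gives $x^2=\frac{a}{b}$ and hence the unique solution $x=\sqrt{a/b}=:x_{\min}$. Substituting back, $g(x_{\min})=\frac{a}{\sqrt{a/b}}+b\sqrt{a/b}=\sqrt{ab}+\sqrt{ab}=2\sqrt{ab}$, confirming both that the infimum is attained and that it equals $2\sqrt{ab}$. Since equality in AM--GM fails for every other $x>0$, the minimizer is unique.

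As a cross-check (and an alternative if one prefers not to invoke AM--GM), I would compute $g'(x)=-\frac{a}{x^2}+b$, observe that on $(0,\infty)$ this vanishes only at $x=\sqrt{a/b}$, note $g''(x)=\frac{2a}{x^3}>0$ so $g$ is strictly convex, whence the unique critical point is the unique global minimum, and evaluate $g$ there as above. There is essentially no obstacle here; the only point requiring a word of care is the restriction to $x>0$ (so that the square root $\sqrt{a/b}$ is the only admissible root and the product $\frac{a}{x}\cdot bx$ is legitimately a product of positive numbers), which is exactly the regime in which the lemma will be applied when optimizing the estimate of Theorem \ref{thm:FEM_modified} over $D$.
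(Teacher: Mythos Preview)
Your proof is correct. The paper takes the derivative route you give as a cross-check: it computes $g'(x)=b-a/x^2$, solves for the unique critical point $x=\sqrt{a/b}$, and evaluates $g$ there. Your primary AM--GM argument is a genuinely different (and arguably cleaner) approach: it gives the lower bound $2\sqrt{ab}$ and the uniqueness of the minimizer in one stroke via the equality condition, without needing convexity or a second-derivative check. The paper's version is more terse but implicitly relies on the reader knowing that the single critical point of this convex function is the global minimum; your AM--GM argument makes that explicit. Since you also include the derivative computation, your proposal strictly contains the paper's proof.
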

    \begin{proof}
    We have $g^\prime(x)=b-a/x^2$ which is zero for $x=\sqrt{a/b}$. Then $g(\sqrt{a/b})=a\sqrt{b/a}+b\sqrt{a/b}=2\sqrt{ab}$.
    \end{proof}
    
    We can now apply the result of this lemma to the error estimate of Theorem \ref{thm:FEM_modified}. This gives us an optimal choice for $D$: 

    \begin{theorem}
    \label{thm:FEM_modified_band_opt}
    Let $u\in \WIIi(\Omega)$. Let $\Th$ contain a band of caps of length $L\sim 1$ and width $\width$. In \eqref{eq:def_D}, set 
    \begin{equation}
    \label{thm:FEM_modified_band_opt:D}
    D=\min\left(\frac{\width}{h^2}\frac{|u|_{\WIi(\B)}}{|u|_{\WIIi(\B)}}\sqrt{\frac{2}{3C_c}} \, , \, \, 1\right),
    \end{equation}
    then the tempered FEM solution $\tu_h$ satisfies
    \begin{equation}
    \begin{split}
    \label{thm:FEM_modified_band_opt:est}
    &|u-\tu_h|_{H^1(\Omega\setminus\B)} +\sqrt{D}|u-\tu_h|_{H^1(\B)}\leq C(u,L)h,
    \end{split}
    \end{equation}
    where
    \begin{equation}
    \begin{split}
    \label{thm:FEM_modified_band_opt:est2}
    C(u,L)=2\sqrt{2}C_I|u|_{H^2(\Omega\setminus\B)} + 2\sqrt{2}\sqrt[4]{6}\sqrt{C_c L} \sqrt{|u|_{\WIi(\B)}|u|_{\WIIi(\B)}}
    \end{split}
    \end{equation}
    is a constant independent of $\width,h$.
    \end{theorem}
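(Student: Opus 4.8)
The plan is to feed the estimate \eqref{thm:FEM_modified:est} of Theorem \ref{thm:FEM_modified} into Lemma \ref{lem:opt} and optimise over $D$. The first term on the right of \eqref{thm:FEM_modified:est}, namely $4C_I^2h^2|u|^2_{H^2(\Omega\setminus\B)}$, is independent of $D$ and already $O(h^2)$, so I would leave it alone. The remaining two terms have exactly the form $a/D+bD$ of Lemma \ref{lem:opt}, with
$$a=4|\B|\,|u|^2_{\WIi(\B)},\qquad b=6|\B|\,C_c^2\,\frac{h^4}{\width^2}\,|u|^2_{\WIIi(\B)}.$$
Lemma \ref{lem:opt} then tells me that the best choice is $D^\star=\sqrt{a/b}$, which (using $|\B|=\width L$) is precisely the expression inside the $\min$ in \eqref{thm:FEM_modified_band_opt:D}, and that the resulting value of the two terms is $g(D^\star)=2\sqrt{ab}$. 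A short computation with $|\B|=\width L$ gives $2\sqrt{ab}=4\sqrt6\,C_c\,L\,h^2\,|u|_{\WIi(\B)}|u|_{\WIIi(\B)}$, which is $O(h^2)$ since $L\sim1$.

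Next I would separate the two cases encoded in the $\min$ of \eqref{thm:FEM_modified_band_opt:D}. If $D^\star\le1$ we take $D=D^\star$ and the two $D$-dependent terms of \eqref{thm:FEM_modified:est} are exactly $g(D^\star)=2\sqrt{ab}$. If $D^\star>1$ we are forced to take $D=1$; here I would invoke Remark \ref{rem:FEM}: when $D=1$ the term $a/D$ originates from the consistency contribution $(A)=(1-D)(\cdots)=0$ and hence disappears, leaving only $bD=b$, and since $D^\star>1$ means $b<a$ we still have $b\le\sqrt{ab}\le 2\sqrt{ab}$. Thus in both cases
$$|u-\tu_h|^2_{H^1(\Omega\setminus\B)}+D|u-\tu_h|^2_{H^1(\B)}\le 4C_I^2h^2|u|^2_{H^2(\Omega\setminus\B)}+4\sqrt6\,C_c\,L\,h^2\,|u|_{\WIi(\B)}|u|_{\WIIi(\B)}.$$

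Finally I would turn this squared estimate into \eqref{thm:FEM_modified_band_opt:est}. Taking square roots and using $\sqrt{x+y}\le\sqrt x+\sqrt y$ bounds the right-hand side by $2C_I h|u|_{H^2(\Omega\setminus\B)}+2\sqrt[4]{6}\sqrt{C_c L}\,h\sqrt{|u|_{\WIi(\B)}|u|_{\WIIi(\B)}}$, while on the left the elementary inequality $\alpha+\beta\le\sqrt2\,\sqrt{\alpha^2+\beta^2}$ applied with $\alpha=|u-\tu_h|_{H^1(\Omega\setminus\B)}$ and $\beta=\sqrt D\,|u-\tu_h|_{H^1(\B)}$ turns the left side of \eqref{thm:FEM_modified_band_opt:est} into $\sqrt2$ times the square root of the quantity just estimated. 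Multiplying through by $\sqrt2$ and collecting constants yields $C(u,L)h$ with $C(u,L)$ exactly as in \eqref{thm:FEM_modified_band_opt:est2}. I expect the only delicate point to be the case $D^\star>1$: one cannot keep the $a/D$ term crudely, because $a=4|\B||u|^2_{\WIi(\B)}=4\width L|u|^2_{\WIi(\B)}$ is only $O(\width)$ and need not be $O(h^2)$ when $\width$ is comparatively large; its removal hinges precisely on the observation in Remark \ref{rem:FEM} that this term vanishes identically when $D=1$. The rest is routine bookkeeping of constants.
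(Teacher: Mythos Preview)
Your proposal is correct and follows essentially the same route as the paper: apply Lemma~\ref{lem:opt} to the two $D$-dependent terms of \eqref{thm:FEM_modified:est}, substitute the optimum back, and pass from the squared estimate to \eqref{thm:FEM_modified_band_opt:est} via the norm equivalence $a+b\le\sqrt{2}\sqrt{a^2+b^2}$. Your treatment of the case $D^\star>1$ (invoking Remark~\ref{rem:FEM} to drop the $a/D$ term and then bounding $b\le 2\sqrt{ab}$) is actually more explicit than the paper's, which simply appends the $\min(\cdot,1)$ at the end and defers the justification to the remarks following the theorem; one minor point is that the paper uses $|\B|\le\width L$ rather than equality, since the band is not exactly rectangular.
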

    \begin{proof}
    We optimize the last two terms in (\ref{thm:FEM_modified:est}) with respect to $D$. To this end, we denote them as
    \begin{equation*}
    \label{thm:FEM_modified_band_opt:1}
    g(D)= \frac{4}{D}|\B| |u|_{\WIi(\B)}^2 +6D|\B|C_c^2 \frac{h^4}{\width^2} |u|^2_{\WIIi(\B)},
    \end{equation*}
    which is a function of the form (\ref{lem:opt_g}) with respect to $D$. The application of Lemma \ref{lem:opt} gives us the optimal choice of $D$:
    \begin{equation*}
    \label{thm:FEM_modified_band_opt:2}
    D_\opt=\sqrt{\frac{4|\B| |u|_{\WIi(\B)}^2}{6|\B|C_c^2 \frac{h^4}{\width^2} |u|^2_{\WIIi(\B)}}} =\frac{\width}{h^2}\frac{|u|_{\WIi(\B)}}{|u|_{\WIIi(\B)}}\sqrt{\frac{2}{3C_c}},
    \end{equation*}
    which is (\ref{thm:FEM_modified_band_opt:D}). Moreover,
    \begin{equation*}
    \begin{split}
    \label{thm:FEM_modified_band_opt:3}
    g(D_\opt)&= 2\sqrt{4|\B| |u|_{\WIi(\B)}^26|\B|C_c^2 \frac{h^4}{\width^2} |u|^2_{\WIIi(\B)}}\\ &=4\sqrt{6}C_c|\B|\frac{h^2}{\width} |u|_{\WIi(\B)}|u|_{\WIIi(\B)}\\
    &\leq 4\sqrt{6}C_c L h^2 |u|_{\WIi(\B)}|u|_{\WIIi(\B)},
    \end{split}
    \end{equation*}
    since $|\B|\leq \width L$ (inequality here since $\B$ is not a rectangle). With the choice $D=D_\opt$, estimate (\ref{thm:FEM_modified:est}) therefore becomes
    \begin{equation}
    \begin{split}
    \label{thm:FEM_modified_band_opt:4}
    &|u-\tu_h|^2_{H^1(\Omega\setminus\B)} +D|u-\tu_h|^2_{H^1(\B)}\\
    &\quad\leq 4C_I^2h^2|u|^2_{H^2(\Omega\setminus\B)} + 4\sqrt{6}C_c L h^2 |u|_{\WIi(\B)}|u|_{\WIIi(\B)}.
    \end{split}
    \end{equation}
    Now we wish to take the square root of (\ref{thm:FEM_modified_band_opt:4}). This is done by considering the equivalence of norms in $\IR^2$, which for $a,b,c,d\ge 0$ gives us 
    \begin{equation}
    \label{thm:FEM_modified_band_opt:5}
    a^2+b^2\leq c^2+d^2 \quad\Longrightarrow\quad a+b\leq\sqrt{2}(c+d).
    \end{equation}
    Applying this to (\ref{thm:FEM_modified_band_opt:4}) gives
    \begin{equation*}
    \begin{split}
    \label{thm:FEM_modified_band_opt:6}
    &|u-\tu_h|_{H^1(\Omega\setminus\B)} +\sqrt{D}|u-\tu_h|_{H^1(\B)}\\
    &\quad\leq 2\sqrt{2}C_I h|u|_{H^2(\Omega\setminus\B)} + 2\sqrt{2}\sqrt[4]{6}\sqrt{C_c L} h \sqrt{|u|_{\WIi(\B)}|u|_{\WIIi(\B)}}.
    \end{split}
    \end{equation*}
    This is the desired estimate (\ref{thm:FEM_modified_band_opt:est}) with the constant $C(u,L)$ defined by (\ref{thm:FEM_modified_band_opt:est2}). Since $D$ needs to be smaller than 1, we take the minimum between $D_\opt$ and $1$.
    \end{proof} \noindent
    
    We note that this choice of $D_\opt$ can be transformed back to a choice of $\Jmin$ by using \eqref{eq:Ktilde_K} and the definition of $d(x)$ from \eqref{eq:def_D}:
    \begin{align} \nonumber
        J_{\text{min,opt}} &= \frac{\J}{D_{\text{opt}}} \\ \nonumber
              &= \frac{h \width}{2} \, \frac{h^2}{\width} \, \frac{|u|_{\WIIi(\B)}}{|u|_{\WIi(\B)}} \, \sqrt{\frac{3C_c}{2}}\\\label{eq:Jminopt}
              &= h^3 \frac{|u|_{\WIIi(\B)}}{|u|_{\WIi(\B)}} \, \sqrt{\frac{3C_c}{8}}
    \end{align}
Thus we have proved that the error converges in 2D when we choose $\Jmin \sim h^3$ as we can see in the convergences plots of the numerical experiment in Figure \ref{fig:conv_2D}.

Contrary to what one might think, the choice of $\Jmin$ is independent of the length $L$. This corresponds well to what was observed during the numerical experiments in section \ref{sec:L_effect}. The error estimate provided by the theorem \ref{thm:FEM_modified_band_opt} predicts an error that varies as $ \sim \sqrt{L}$ when $\Jmin$ is chosen optimally. In the numerical experiment we observed this dependence in $\sim \sqrt{L}$ when $\Jmin$ was chosen too small (locking), as we can see in figure \ref{fig:L_effect}.

\noindent\textbf{Remarks:}
\begin{itemize}
    \item The expression for $D$ in (\ref{thm:FEM_modified_band_opt:D}) guarantees that  $D\leq 1$. One might ask what exactly happens when the expression in the first argument of the minimum is greater than 1. In this case $\width\ge Ch^2$, which means that standard finite elements (i.e. $D=1$) have optimal $O(h)$ convergence rate (cf. Remark \ref{rem:FEM}), and therefore no tempering is necessary in the finite element method, i.e. $D=1$ is the best choice in this case. This justifies the fact that the diffusion coefficient is always decreased in TFEM, never increased.
    \item We note that the very regular structure of elements in the band $\B$ as presented in Figure \ref{fig:2D_band} is not needed in the analysis, the band is only assumed to be somehow composed of `degenerating' elements.
    \item There need not be a singe band of caps, there can be several layers of these bands next to each other or several separate bands of caps in the domain $\Omega$. The only condition for the analysis to remain the same is that their number does not grow as $h\to 0$.
    \item Poincar\'{e}'s inequality immediately gives us the estimate
    \begin{equation}
    \label{est:Poincare}
    |u-\tu_h|_{L^2(\Omega)}\leq C(u,L)h
    \end{equation}
    for some constant $C(u,L)$ independent of $h$. This can be easily seen by applying Poincar\'{e}'s inequality to (\ref{thm:FEM_modified_band_opt:est}) on $\Omega\setminus\B$ and a simple continuity argument on $\B$. This $L^2$-error estimate is subomptimal. However, we were unable to prove optimal $L^2$-error estimates, as the standard Aubin-Nitsche duality technique runs into technical obstacles. Nevertheless numerical experiments indicate that the method has optimal convergence rates in $L^2(\Omega)$. 
    \item The constant $C_c$ from Lemma \ref{lem:circum} is very close to 1. The paper \cite{Kucera_circumradius} gives the value $2\sqrt{2}+2$, which can most likely be improved. In the end, the factor $\sqrt{\frac{2}{3C_c}}$ from (\ref{thm:FEM_modified_band_opt:D}) can simply be neglected for simplicity.
\end{itemize}

\subsubsection*{Extension to 3D}
The results of Theorem \ref{thm:FEM_modified_band_opt} can be straightforwardly extended to 3D. In this case, the thin band $\B$ with width $\bar{h}$ and length $L$ becomes a thin slab $\mathcal{S}$ of width $\bar{h}$ and area $\mathcal{A}$ of its base. We assume that the slab 
is partitioned into degenerating tetrahedra. The situation in 3D is much more complicated than 2D, as there are 9 basic types of degenerate tetrahedra, cf. \cite{Krizek}. However what they have in common is that at least one of their four altitudes is always small (similarly to one of the altitudes of a degenerate triangle). We assume the partition of $S$ into tetrahedra is such that the minimal altitude of each tetrahedron is $\bar{h}$ and its longest edge is $h$. Then the proof of Theorem \ref{thm:FEM_modified_band_opt} can be trivially extended to 3D provided we have an estimate for the error of linear Lagrange interpolation on a tetrahedron $T$ of the same form as in Lemma \ref{lem:circum}. We then obtain an $O(h)$ estimate of the TFEM error in the $H^1(\Omega\setminus \mathcal{S})$ seminorm provided $D\sim \frac{\bar{h}}{h^2}$, exactly as in Theorem \ref{thm:FEM_modified_band_opt}. We note that in 3D, this choice corresponds to $\Jmin\sim h^4$, which agrees with the numerical experiments from Figure \ref{fig:conv_3d}.

The necessary interpolation error estimate analogous to Lemma \ref{lem:circum} can be easily obtained from the estimate of Baidakova, \cite{Baidakova}, as in the following lemma.

\begin{lemma}[Interpolation error estimate in 3D]
\label{lem:circum_3D}
Let $T\subset\IR^3$ be an arbitrary tetrahedron with longest edge length $h$ and minimal altitude $\bar{h}$. Let $u\in \WIIi(T)$ and let $\Pi_T u$ be the linear Lagrange interpolation of $u$ on $T$. Then there exists a constant $C_c$ independent of $u$ and $T$ such that
\begin{equation}
|u-\Pi_T u|_{\WIi(T)}\leq C_c \frac{h^2}{\bar{h}}|u|_{\WIIi(T)}.
\label{lem:circum_3D:est}
\end{equation}
\end{lemma}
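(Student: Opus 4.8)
The plan is to deduce this estimate from Baidakova's sharp anisotropic interpolation bound on tetrahedra, \cite{Baidakova}, in exactly the way Lemma~\ref{lem:circum} is deduced from the circumradius estimate of Kobayashi. Baidakova's result bounds the $\WIi$-seminorm of the error of linear Lagrange interpolation on an \emph{arbitrary} tetrahedron $T$ (with no shape-regularity, minimum-angle, or maximum-angle restriction) by
\[
|u-\Pi_T u|_{\WIi(T)}\leq C_B\,\gamma(T)\,|u|_{\WIIi(T)},
\]
where $C_B$ is an absolute constant and $\gamma(T)$ is an explicit functional of the edge lengths and of the four altitudes of $T$, having the dimension of length, growing with the edge lengths and decaying with the altitudes. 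The whole argument then reduces to two steps: (i) quoting Baidakova's estimate in this form; and (ii) showing that $\gamma(T)\leq C\,h^2/\width$ whenever every edge of $T$ has length at most $h$ and every altitude is at least $\width$.

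For step (ii) I would use the monotonicity of $\gamma$: replacing each edge length by its upper bound $h$ and each altitude by its lower bound $\width$ can only enlarge $\gamma(T)$, so it suffices to evaluate $\gamma$ on this worst-case scaling. Since $\gamma$ is quadratic in the edge lengths and inversely proportional to the altitude that is actually small, this yields $\gamma(T)\leq C\,h^2/\width$ — the exact 3D analogue of the 2D bound $R_K=\frac{l_1l_2l_3}{4|K|}\leq\frac{h^2}{2\width}$, in which the circumradius is quadratic in the side lengths and inversely proportional to the shortest altitude $\width$ (recall $2|K|=l_{\max}\,\width$). Setting $C_c=C\,C_B$ then gives \eqref{lem:circum_3D:est}. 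With this lemma at hand the extension of Theorem~\ref{thm:FEM_modified_band_opt} to 3D is immediate: one repeats its proof verbatim, replacing every use of \eqref{lem:circum:est} by \eqref{lem:circum_3D:est} and $\B$ by the slab $\mathcal{S}$, which is why the optimal choice $D\sim\width/h^2$ (equivalently $\Jmin\sim h^4$) is unchanged.

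The main obstacle is not the inequality itself but checking that Baidakova's hypotheses are genuinely satisfied by the extremely degenerate tetrahedra arising in the slab — in particular that her estimate holds, with a single constant, for \emph{all} nine degeneracy types classified in \cite{Krizek}, and that the functional $\gamma(T)$ appearing there is controlled by the two scalars $h$ and $\width$ alone, rather than by finer data such as face areas or dihedral angles. The delicate point, if $\gamma$ happens to involve more than one altitude, is to verify that bounding each altitude below by $\width$ still produces exactly the power $\width^{-1}$ and not a worse negative power; I expect this to hold precisely because the standing assumption is that only one altitude, namely $\width$, is small, while the geometry forbids a second altitude from being simultaneously of order $\width$ without the interpolation error blowing up faster. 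Should the citation prove awkward to invoke directly, a self-contained fallback is to bound the directional derivatives of $u-\Pi_T u$ one at a time: along each edge the error is an $O(h^2)$ one-dimensional interpolation error, and assembling these edge-direction bounds into a bound on the full gradient costs exactly one factor $1/\width$ through the change of basis from the (possibly nearly coplanar) edge directions to an orthonormal frame.
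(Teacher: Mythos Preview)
Your overall plan---invoke Baidakova's estimate and then bound the resulting geometric factor by $h^2/\width$---is exactly the paper's approach. The difference is that you treat Baidakova's functional $\gamma(T)$ as a black box with asserted monotonicity properties and then worry about whether the bound really yields only $\width^{-1}$ across all nine degeneracy types, whereas the paper simply writes down the explicit form of the estimate, namely $|u-\Pi_T u|_{\WIi(T)}\le C\,\tfrac{R}{\sin\varphi}\,|u|_{\WIIi(T)}$ with $R=\max_i R_i$ the largest face circumradius and $\sin\varphi=\max_{ij}\sin\varphi_{ij}$ the largest sine of a dihedral angle. The bound $\tfrac{R}{\sin\varphi}\le \tfrac{h^2}{2\width}$ then follows from a two-line geometric computation: the law of sines gives $2R_i=|AC||AB|/h_c$ on the extremal face, and the identity $\sin\varphi_{ij}=\bar h_c/h_c$ (altitude of the tetrahedron over altitude of the face, along the shared edge) converts the face altitude into a tetrahedron altitude, yielding $\tfrac{R}{\sin\varphi}\le\tfrac{|AC||AB|}{2\bar h_c}\le\tfrac{h^2}{2\width}$. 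Because $\sin\varphi$ is a \emph{maximum} over dihedral angles, one is free to choose the particular pair $(i,j)$ that makes this identity work, which is what dissolves your concern about ``more than one altitude'' or dependence on finer geometric data: only the single smallest tetrahedron altitude $\width$ ever enters, and the argument is uniform over all degeneracy types without case analysis. Your fallback directional-derivative argument would also work but is unnecessary once the explicit form of $\gamma$ is used.
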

\begin{proof}
Denote the faces of $T$ as $T_i, i=1,\ldots, 4$. Let $R_i$ be the radius of the
circle circumscribed about $T_i$, and let $\varphi_{ij}$ be the angle between faces $T_i, T_j$. Let $R=\max_i R_i$ and $\sin \varphi=\max_{ij}\sin\varphi_{ij}$. Then in \cite{Baidakova} it is proven that for some constant $C$,
\begin{equation}
\label{lem:circum_3D:1}
|u-\Pi_T u|_{\WIi(T)}\leq C \frac{R}{\sin\varphi}|u|_{\WIIi(T)}.
\end{equation}
Now we estimate the factor $\frac{R}{\sin\varphi}$. Let $R=R_i$ for some $i$. Then the corresponding face $T_i$ is a triangle with vertices $A,B,C$. Assume that $AB$ is the longest edge. The law of sines then states that $2R_i = |AC||AB|/h_c$, where $h_c$ is the altitude in ABC corresponding to vertex $C$. Moreover, let $\bar{h}_c$ be the height of the tetrahedron $T$ corresponding to the apex $C$. Then trivially, $\sin \varphi_{ij} =\bar{h}_c/h_c$, combining these expressions, we get
\begin{equation}
\label{lem:circum_3D:2}
\frac{R}{\sin\varphi}\leq \frac{R}{\sin\varphi_{ij}}= \frac{ |AC||AB|/(2h_c)}{\bar{h}_c/h_c} = \frac{ |AC||AB|}{2\bar{h}_c} \leq \frac{h^2}{2\bar{h}},
\end{equation}
since $\bar{h}\leq \bar{h}_c$. Substituting (\ref{lem:circum_3D:2}) into  (\ref{lem:circum_3D:1}) gives the desired estimate.
\end{proof}

\subsection{Zero-measure elements -- TFEM and Mortaring}
\label{section:zero-measure}
In \S\ref{section:theoretical1}, we proved a sufficient condition for the convergence of the tempered finite element method for any band of degenerate elements with nonzero width. It is important to realize that this proof 
is not valid if the band width is exactly zero. 
It is however valid 
 when the band degenerates to zero width whatever the rate. 
 In order to generalize the proof of section 3.1  to the case where $\width = 0$ (truly or up to machine precision), we show that our approach is equivalent to a mortar method penalizing the jump of the solution on the band.\\ 
\begin{figure}[!ht]
    \centering
    \includegraphics[width=0.9\linewidth]{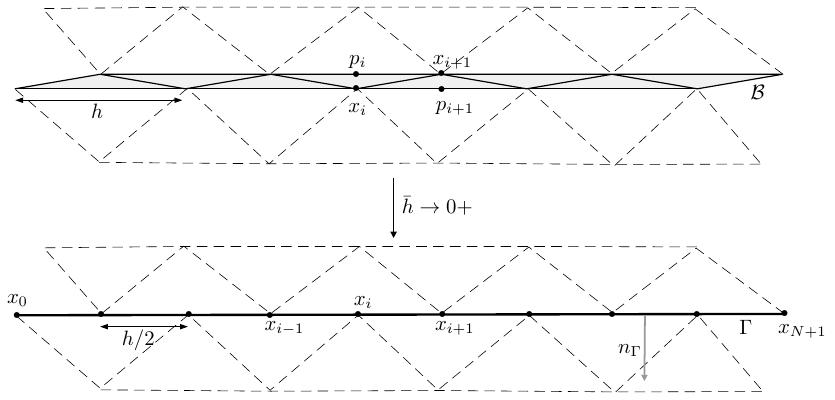}
    \caption{Degeneration of a band of caps $\mathcal{B}$ to a line $\Gamma$ as the width of the band $\width$ goes to zero.}
    \label{fig:Band_zero_measure}
\end{figure}\\
Again we assume a regular structure of the band of caps $\B$ with length $L$ and width $\width$. The goal of this section is to formulate the limiting scheme of \eqref{eq:FEM_mod} for $\width =0$, which we will interpret as the limit for $\width\to 0^+$. 

As $\width\to 0^+$, the band $\B$ degenerates to a line $\Gamma$ and the resulting triangulation $\Th^\Gamma$ is not conforming on $\Gamma$, cf. Figure \ref{fig:Band_zero_measure}. We will also call $\Gamma$ a zero-measure band. We will label the elements in the band as $K_i, i=1,\ldots,N$, ordered from one end of $\B$ to the other. For an element $K_i\in\B$, let $x_i$ denote its maximum angle vertex and $p_i$ be the foot of the altitude corresponding to $x_i$, cf. Figure \ref{fig:Band_zero_measure}. We also label the endpoints of $\Gamma$ as $x_0$ and $x_{N+1}$,respectively. As we send $\width\to 0^+$ and $\B$ degenerates to a line $\Gamma$, each $x_i$ merges with its corresponding $p_i$. We note that each $x_i$ is the vertex of a triangle adjacent to $\B$. Let $n_\Gamma$ be the unit normal to $\Gamma$ (arbitrarily oriented). We can then define the two one-sided limits of a function $f:\Omega\setminus\Gamma\to\IR$  at $x\in\Gamma$:
\begin{equation}
f^+(x)=\lim_{t\to 0^+}f(x+t n_\Gamma),\quad f^-(x)=\lim_{t\to 0^-}f(x+t n_\Gamma)
\end{equation} 
and the jump
\begin{equation}
[f]_x = f^-(x)-f^+(x),
\end{equation} 
where we will sometimes omit the subscript $x$ for simplicity. 

On $\B$ we will need to split the gradient of a function into two parts: $\gradn u$ will be the component of $\nabla u$ in the direction perpendicular to $\Gamma$, i.e. $\gradn u=(\nabla u\cdotp n_\Gamma)n_\Gamma$, while $\gradt u$ denotes the component of $\nabla u$ in the direction parallel to $\Gamma$. Using this notation, we can rewrite the scheme \eqref{eq:FEM_mod}, i.e.
\begin{equation}
\label{eq:FEM_zero:1}
\int_{\Omega\setminus\B} \nabla \tu_h\cdotp\nabla v_h \dx +D\int_{\B} \nabla \tu_h\cdotp\nabla v_h \dx=(f,v_h),\quad\forall v_h\in V_h.
\end{equation}
On $\B$ we write
\begin{equation*}
\label{eq:FEM_zero:2}
\nabla \tu_h\cdotp\nabla v_h =(\gradn \tu_h+\gradt \tu_h)\cdot(\gradn v_h +\gradt v_h) = \gradn \tu_h \cdotp\gradn v_h +\gradt \tu_h\cdotp\gradt v_h,
\end{equation*}
since $\gradn \tu_h\cdotp\gradt v_h =\gradt \tu_h\cdotp\gradn v_h =0$ by definition. We can therefore write \eqref{eq:FEM_zero:1} as
\begin{equation}
\label{eq:FEM_zero:3}
\int_{\Omega\setminus\B} \nabla \tu_h\cdotp\nabla v_h \dx +D\int_{\B} \gradn \tu_h \cdotp\gradn v_h +\gradt \tu_h\cdotp\gradt v_h \dx=(f,v_h).
\end{equation}
Now we derive the form of \eqref{eq:FEM_zero:3} for $\width=0$ by sending $\width\to 0^+$. Eventually, we will have $D\sim\width/h^2$, where the numerator $\width$ will play an important role in the derivation. For this reason, we write $D(\width, h)=\width\tD(h)$ and it will be important to choose $\tD$ appropriately in the scheme.

\begin{lemma}
\label{lem:FEM_zero}
Let $D=\width\tD$. Then the tempered FEM scheme \eqref{eq:FEM_mod} for $\width=0$ is equivalent to the formulation
\begin{equation}
\label{eq:FEM_zero}
\int_{\Omega\setminus\Gamma} \nabla \tu_h\cdotp\nabla v_h \dx +\tD\frac{h}{2}\sum_{x_i\in\Gamma}[\tu_h]_{x_i}[v_h]_{x_i}=(f,v_h),\quad\forall v_h\in V_h^\Gamma,
\end{equation}
where
\begin{equation*}
V_h^\Gamma = \{v_h\in C(\Omega\setminus\Gamma):\ v_h|_K\in P^1(K)\,\forall K\in\Th, v_h|_{\partial\Omega}=0\}
\end{equation*}
is the set of globally continuous piecewise linear functions discontinuous on $\Gamma$.
\end{lemma}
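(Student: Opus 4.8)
The plan is to pass to the limit $\width\to0^+$ directly in the rewritten scheme \eqref{eq:FEM_zero:3}, which already splits the band contribution into a part normal to $\Gamma$ and a part tangential to $\Gamma$. Throughout, I identify a function $v_h$ of the width‑$\width$ space $V_h$ with the function of $V_h^\Gamma$ having the same nodal values: the only nodes that move are the apex nodes $x_i$, sitting at perpendicular distance $\width$ from $\Gamma$, which in the limit become the ``apex‑side'' degrees of freedom on $\Gamma$ while the opposite sub‑mesh supplies the ``long‑edge‑side'' trace. Under this identification the stiffness matrix and load vector of \eqref{eq:FEM_mod} depend continuously on $\width$; the claim is that their limits are exactly those of \eqref{eq:FEM_zero}, so that $\tu_h\in V_h^\Gamma$ solves \eqref{eq:FEM_zero} (the limiting system being solvable because that form is coercive on $V_h^\Gamma$, the penalty controlling the jumps). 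It therefore suffices to pass to the limit term by term in \eqref{eq:FEM_zero:3}.

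\textbf{The three easy terms.} First, $\int_{\Omega\setminus\B}\nabla\tu_h\cdotp\nabla v_h\dx\to\int_{\Omega\setminus\Gamma}\nabla\tu_h\cdotp\nabla v_h\dx$: the non‑band elements deform continuously as $\width\to0$ without degenerating, so their gradients and the functions $\tu_h,v_h$ converge, and $|\B|\to0$. Second, on the right‑hand side $\int_\B f v_h\dx=O(|\B|)\to0$, so $(f,v_h)$ is unchanged up to the null set $\Gamma$. Third, the tangential band term $D\int_\B\gradt\tu_h\cdotp\gradt v_h\dx$ vanishes: on a cap $K_i$ the tangential derivative $\gradt\tu_h$ is the slope of $\tu_h$ along its long edge, which stays bounded as $\width\to0$ since $h$ is fixed, while $D|\B|=\width\tD\cdot O(\width)=O(\width^2)\to0$.

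\textbf{The normal band term.} This is where the penalty appears. On a cap $K_i$ with long edge of length $h$ lying on a line parallel to $\Gamma$ and apex $x_i$ at perpendicular distance $\width$, let $p_i$ be the foot of the altitude from $x_i$ and write $x_i=p_i+\sigma_i\width\, n_\Gamma$ with $\sigma_i\in\{-1,+1\}$; since $\tu_h$ is affine on $K_i$ this gives the \emph{exact} identity $\nabla\tu_h\cdotp n_\Gamma=\sigma_i\bigl(\tu_h(x_i)-\tu_h(p_i)\bigr)/\width$, where $\tu_h(p_i)$ is the value at $p_i$ of the affine interpolant of $\tu_h$ along the long edge. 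Because $|K_i|=\tfrac12 h\width$ and $D=\width\tD$, for every $\width>0$
\begin{equation*}
D\int_{K_i}\gradn\tu_h\cdotp\gradn v_h\dx=\tD\,\frac h2\,\bigl(\tu_h(x_i)-\tu_h(p_i)\bigr)\bigl(v_h(x_i)-v_h(p_i)\bigr),
\end{equation*}
the factor $\sigma_i^2=1$ killing the sign. Summing over the $N$ band elements and letting $\width\to0^+$: each $x_i$ and $p_i$ collapse to the same point of $\Gamma$ from opposite sides, so $\tu_h(x_i)-\tu_h(p_i)\to\pm[\tu_h]_{x_i}$ and likewise for $v_h$ with the same sign, whence the product tends to $[\tu_h]_{x_i}[v_h]_{x_i}$. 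Adding this to the three limits above yields exactly \eqref{eq:FEM_zero} with test space $V_h^\Gamma$.

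\textbf{Main obstacle.} The computations are short; the care goes into the limit bookkeeping. One must justify the nodal identification $V_h\leftrightarrow V_h^\Gamma$ and the convergence $\tu_h^{(\width)}\to\tu_h$ under it (in particular that the limit stiffness matrix is that of \eqref{eq:FEM_zero} and is invertible), and above all the identification $\lim_{\width\to0}\bigl(\tu_h(x_i)-\tu_h(p_i)\bigr)=\pm[\tu_h]_{x_i}$. This last point is where the assumed \emph{regular} structure of the band is used essentially: it guarantees the long edges lie exactly along $\Gamma$ and the apex altitudes are exactly $\width$, which is what makes the identity $\nabla\tu_h\cdotp n_\Gamma=\sigma_i(\tu_h(x_i)-\tu_h(p_i))/\width$ exact. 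For a merely ``degenerating'' band one would only get this up to a factor $\cos\theta_i$, with $\theta_i$ the small angle between the long edge and $\Gamma$, and one would then need an auxiliary bound showing $\theta_i\to0$ fast enough to absorb the resulting cross term with $\gradt\tu_h$.
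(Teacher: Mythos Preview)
Your proof is correct and follows essentially the same route as the paper's: split the band term into tangential and normal parts via \eqref{eq:FEM_zero:3}, show the tangential contribution is $O(\width^2)$ and vanishes, and compute the normal contribution exactly using $\gradn\tu_h=\sigma_i(\tu_h(x_i)-\tu_h(p_i))/\width$ together with $|K_i|=\tfrac12 h\width$ and $D=\width\tD$ to obtain $\tD\tfrac{h}{2}\sum_i[\tu_h]_{x_i}[v_h]_{x_i}$ in the limit. The only cosmetic difference is that the paper carries out the computation on basis functions $\varphi_j,\varphi_k$ rather than on $\tu_h,v_h$ directly; your additional discussion of the nodal identification $V_h\leftrightarrow V_h^\Gamma$ and of the role of the regular band structure is a welcome clarification of points the paper leaves implicit.
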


\begin{remark}
\label{rem:quadrature}
The scheme \eqref{eq:FEM_zero} is similar to the nonconsistent penalization method of Babu\v{s}ka, cf. \cite{Babuska:1973:FEM},
\begin{equation}
\label{eq:FEM_penalization}
\int_{\Omega\setminus\Gamma} \nabla u_h\cdotp\nabla v_h \dx +\tD\int_\Gamma[u_h][v_h]\dS =(f,v_h),\quad\forall v_h\in V_h^\Gamma,
\end{equation}
due to the fact that
\begin{equation}
\label{eq:FEM_penalization_quadrature}
\int_\Gamma[u_h][v_h]\dS\approx \frac{h}{2}\sum_{x_i\in\Gamma}[u_h]_{x_i}[v_h]_{x_i}
\end{equation}
is a \emph{composite trapezoidal quadrature} approximation on the partition given by the nodes $x_i\in\Gamma$, which has step size $h/2$. We note that the endpoints $x_0,x_{N+1}$ of $\Gamma$ do not come into play, since  $[v_h]_{x_0}=[v_h]_{x_{N+1}}=0$ for all $v_h\in V_h^\Gamma$. Also note that \eqref{eq:FEM_penalization_quadrature} is an approximation, not an equality, since $[u_h][v_h]$ is a piecewise quadratic function and composite trapezoidal quadratures integrate exactly only piecewise linear functions.
\end{remark}

\begin{proof}[Proof of Lemma \ref{lem:FEM_zero}]
Consider the scheme \eqref{eq:FEM_mod} in the form \eqref{eq:FEM_zero:3}. Let $\varphi_i$, $i=1,\ldots,M$ be the standard `tent' basis functions of $V_h$ corresponding to individual vertices of $\Th$. We write $\tu_h(x)=\sum_{j=1}^M U_j\varphi_j(x)$, where $U_j\in \IR$, and set $v_h:=\varphi_k$. Then, \eqref{eq:FEM_zero:3} becomes
\begin{equation}
\label{lem:FEM_zero:1}
\sum_{j=1}^M U_j\int_{\Omega\setminus\B} \nabla \varphi_j\cdotp\nabla \varphi_k \dx +\sum_{j=1}^M U_j\width\tD\int_{\B} \gradn \varphi_j \cdotp\gradn \varphi_k +\gradt \varphi_j\cdotp\gradt \varphi_k \dx=(f,\varphi_k).
\end{equation}
Now consider that for all $j$, it holds $|\gradt \varphi_j|\leq1/h$ on each $K\in\B$ and that $|\B|\leq L\width$. Therefore, for fixed $h$,
\begin{equation*}
\label{lem:FEM_zero:2}
\Big|\width\tD\int_{\B} \gradt \varphi_j\cdotp\gradt \varphi_k \dx\Big| \leq \width^2 L\tD h^{-2} \to 0,\text{ as }\width\to 0^+,
\end{equation*}
therefore these terms `disappear' from the limiting system of linear equation (\ref{lem:FEM_zero:1}) for $\width=0$.

On the other hand, on an element $K_i\in\B$ with apex $x_i$ we have
\begin{equation*}
\gradn \varphi_j=\pm\frac{\varphi_j(x_i)-\varphi_j(p_i)}{\width}n_\Gamma,
\end{equation*}
for all $j$ (the sign depends on the orientation of $K$ with respect to $n_\Gamma$). Therefore (since $|K|=\tfrac{1}{2}\width h$ for all $K\in\B$),
\begin{equation}
\nonumber
\label{lem:FEM_zero:3}
\begin{split}
&\width\tD\int_{\B} \gradn \varphi_j \cdotp\gradn \varphi_k \dx= \width\tD\sum_{K_i\in\B}\frac{1}{2}\width h \frac{\varphi_j(x_i)-\varphi_j(p_i)}{\width}
\frac{\varphi_k(x_i)-\varphi_k(p_i)}{\width}\\
&\ = \tD\frac{h}{2}\sum_{K_i\in\B} \big(\varphi_j(x_i)-\varphi_j(p_i)\big) \big(\varphi_k(x_i)-\varphi_k(p_i)\big)\longrightarrow \tD\frac{h}{2}\sum_{x_i\in\Gamma} [\varphi_j]_{x_i}[\varphi_k]_{x_i},
\end{split}
\end{equation}
for $\width\to 0^+$.

Altogether, the limiting formulation of (\ref{lem:FEM_zero:1}) for $\width=0$ is 
\begin{equation*}
\label{lem:FEM_zero:4}
\sum_{j=1}^M U_j\int_{\Omega\setminus\Gamma} \nabla \varphi_j\cdotp\nabla \varphi_k \dx +\sum_{j=1}^M U_j\tD\frac{h}{2} \sum_{x_i\in\Gamma} [\varphi_j]_{x_i}[\varphi_k]_{x_i} =(f,\varphi_k),
\end{equation*}
which is equivalent to \eqref{eq:FEM_zero}.

\end{proof}

Now that we have a limiting formulation for $\bar{h}=0$, the TFEM scheme can be analyzed. The main result is Theorem \ref{thm:FEM_zero_opt}, which basically states that if $\tD\sim\frac{1}{h^2}$, then 
\begin{equation}
|u-\tu_h|_{H^1(\Omega\setminus\Gamma)} +\sqrt{\tD}\big\|[u-\tu_h]\big\|_{L^2(\Gamma)}\leq C(u,L)h,
\end{equation}
as expected. In order not to break the flow of the paper, we move the theorem and its proof to Appendix \ref{apx:zero_tfem}. We note that similarly as in (\ref{eq:Jminopt}), the choice $\tD\sim\frac{1}{h^2}$ corresponds to simply taking $J_{\text{min}}\sim h^3$ in the code. 

\section{Extensions}
So far we've worked with a scalar Laplacian operator. 
In this section we discuss possible  extensions. The first one is to 
consider linear elasticity. The second extension deals with the potential use of the TFEM for the mortaring of non-conformal meshes, even when there is a large difference in mesh size. Third, we show that high-order elements adapt well to our approach. With a slightly modified choice of $\Jmin$, the TFEM ensures the proper convergence rate of high-order elements. The fourth and last extension studies the TFEM
capabilities for  advection type equations.
\subsection{Extension to linear elasticity}
The deformation $\mathbf{u}$ of a solid  under mechanical loads is    defined by the system of partial differential equations:
\begin{align*}
    \nabla \cdot \mathbf{\sigma}(\epsilon(\mathbf{u})) &= \mathbf{f} \hspace{2cm} \text{in }\Omega,\\
    \mathbf{u} &= \mathbf{u}_{D} \hspace{1.7cm} \text{on } \Gamma_D, \\
    \mathbf{\sigma} \cdot \mathbf{n} &= \mathbf{t}_N \hspace{1.8cm} \text{on } \Gamma_N, 
\end{align*}
where $\mathbf{\sigma}$ is the stress tensor, $\mathbf{f}$ body forces, and $\mathbf{t}_N$ normal stresses applied on the part $\Gamma_N$
of the boundary. The complementary of the boundary $\Gamma_D$ is where Dirichlet boundary conditions $\mathbf{u}_{D}$ are imposed.
The infinitesimal strain tensor $\epsilon$ 
is given by $\mathbf{\epsilon}(\mathbf{u}) = \frac{1}{2} \left( \nabla \mathbf{u} + \nabla \mathbf{u}^T \right)$.

The relationship between the stress tensor and the infinitesimal strain tensor is assumed linear (Hooke's model) :
\begin{equation*}
    \mathbf{\sigma} = \mathbf{C} : \mathbf{\epsilon},
\end{equation*}
where $\mathbf{C}$ is the elasticity tensor for plane strain in our case. 
 
In this section, we simulate the interaction between two different materials with two kinds of meshes: A regular mesh with the interface directly represented in it for the classic FEM and a mesh where the two materials are connected through a band of caps for the TFEM approach. 
The geometry of the problem is shown in Figure \ref{fig:elasticity_problem} as well as the  the von Mises stresses $\sigma_{\text{vm}} = \sqrt{\sigma_{xx}^2 + \sigma_{yy}^2 - \sigma_{xx}\sigma_{yy} + 3 \sigma_{xy}^2}$.  Results for both the classic FEM and the TFEM approach are visually very similar. \\
\definecolor{airforceblue}{rgb}{0.36,0.54,0.66}
\begin{figure}[!ht]
    \begin{tikzpicture}
        \draw[thick] (0,0) rectangle (5,5);
        \draw[thick] (0,2.5) -- (5,2.5);
        \node at (3.5, 1) [below right] {$E_1, \nu_1$};
        \node at (3.5, 3.5) [below right] {$E_2, \nu_2$};
        \node at (4, 5.5) [below right] {$\Gamma_N$};
        \draw[ultra thick, airforceblue] (0,5) -- (5,5);
        \node at (4, -0.6) [above right] {$\Gamma_D$};
        \draw[ultra thick, brown] (0,0) -- (5,0);
        \draw[->, thick] (1.5, 6) -- (1.5, 5.2);
        \draw[->, thick] (3.5, 6) -- (3.5, 5.2);
        \node at (8, 2.5) {\includegraphics[width=5cm]{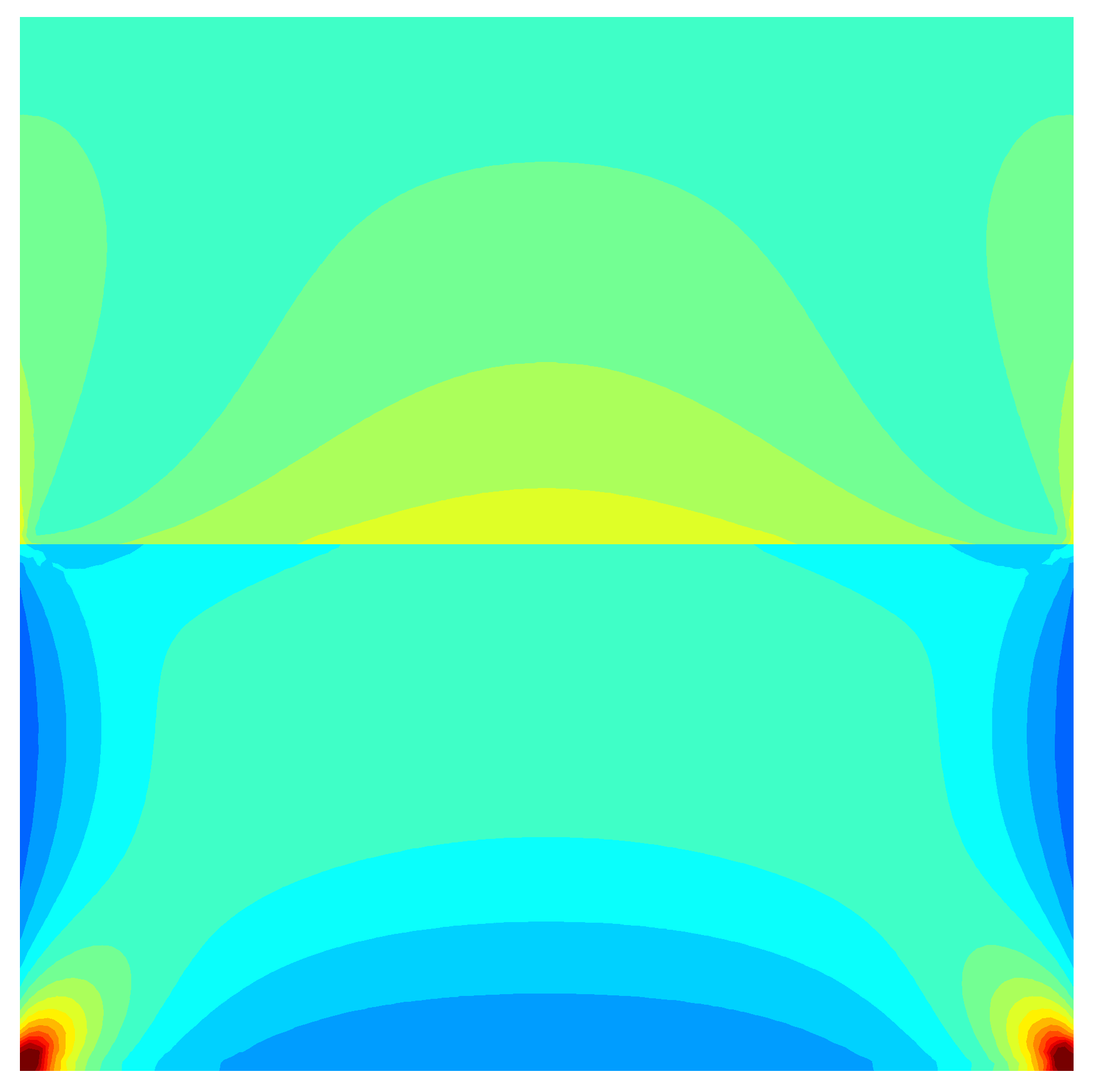}};
        \node at (13.5, 2.5) {\includegraphics[width=5cm]{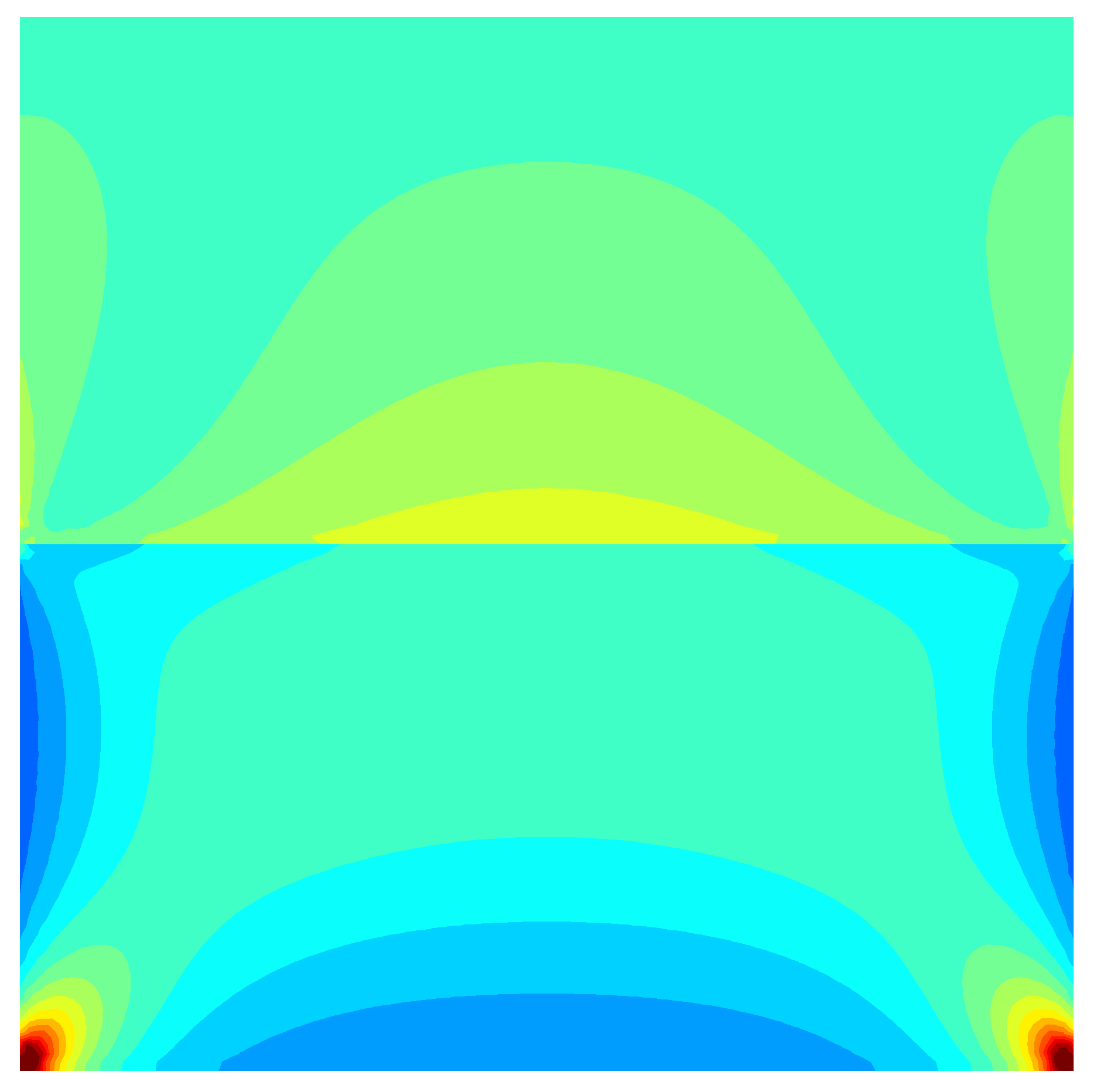}};
        \node at (10.75, -0.43) {\includegraphics[width=8cm]{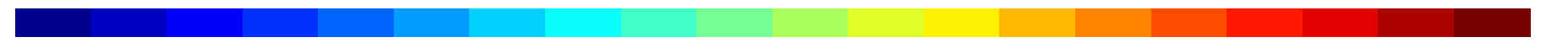}};
        \node at (6.45,-0.37) {$300$};
        \node at (15,-0.37) {$750$};
        \node at (10.77,-0.13) {$\sigma_{\text{vm}}$};
        \node at (8, 5.25) {Classic FEM};
        \node at (13.5, 5.25) {TFEM};
    \end{tikzpicture} \vspace{-0.5cm}
    \caption{Problem configuration (left) and von Mises stresses for the regular mesh (middle) and the mesh with a band of caps using TFEM (right).}
    \label{fig:elasticity_problem}
\end{figure}\\
To validate this observation we consider a problem with a manufactured solution such as the one presented in section \ref{sec:experiment}. As shown in the convergence plots at Figure \ref{fig:conv_elasticity}, the TFEM approach yields results that are comparable to those obtained using traditional finite element methods (FEM). However, the key advantage of TFEM lies in its flexibility: it allows for accurate simulation without requiring the mesh to be of good quality and can thus simplify the meshing of domains with an interface. The convergence graphs illustrate that both the traditional FEM and TFEM achieve similar convergence rates as the mesh is refined, validating the accuracy of the TFEM.
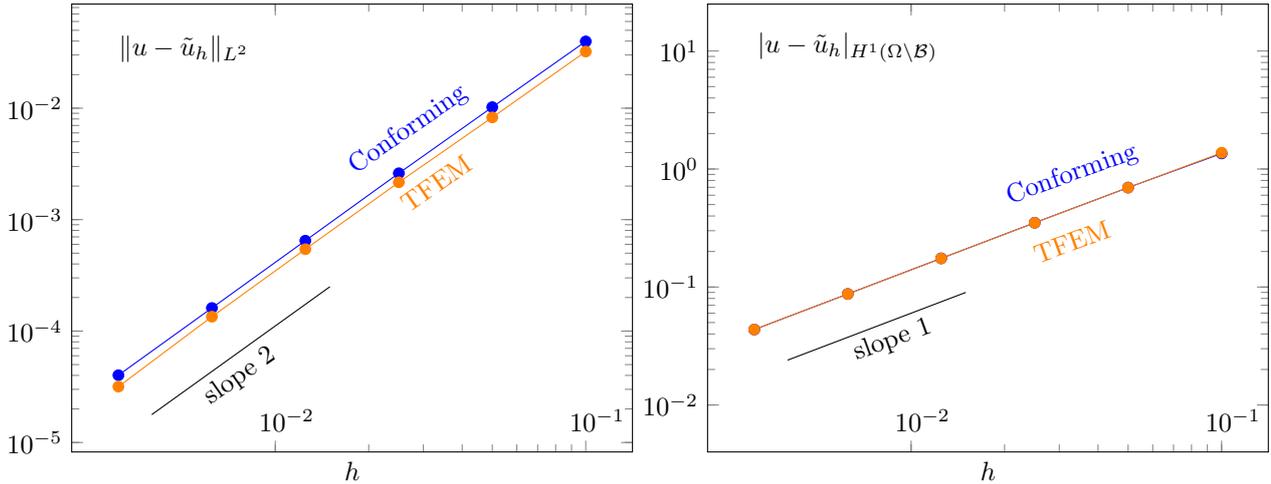
\begin{figure}[!ht]
    \hspace{-0.75cm}%
    \begin{tikzpicture}
        \begin{axis}[
            xlabel={$h$},
            ylabel={$\|u - \tilde{u}_h\|_{L^2}$},
            xtick={0.01, 0.1},
            xticklabel style={above, yshift=1ex, xshift=1ex},
            ytick={0.00001, 0.0001, 0.001, 0.01, 0.1},
            legend pos=north west,
            ymajorgrids=true,
            grid=none,
            xmode=log,
            ymode=log,
            width=0.56\textwidth,
            height=0.47\textwidth,
            x label style={at={(axis description cs:0.5,-0.0)},anchor=north},
            y label style={at={(axis description cs:0.2,0.85)},rotate=-90,anchor=south},
        ]

        \addplot[
            color=blue,
            mark=*,
            ]
            coordinates {
                (0.1, 0.03974827346829593) (0.05, 0.010217335092967733) (0.025, 0.0026056705258123165) (0.0125, 0.000647233502442667) (0.00625, 0.00016114168129659532) (0.003125, 4.0165661217557574e-05)
            }; 

        \addplot[
            color=orange,
            mark=*,
            ]
            coordinates {
                (0.1, 0.032220024516440036) (0.05, 0.008264206705749955) (0.025, 0.0021622854972983786) (0.0125, 0.0005424663723752355) (0.00625, 0.00013454673724207065) (0.003125, 3.172101081967118e-05)
            }; 

        \addplot[color=black, mark=none] 
            coordinates {(0.015, 0.00025) (0.004, 0.00001778)};
        \node [blue, rotate=35] at (rel axis cs: 0.6, 0.72) { Conforming };
        \node [orange, rotate=35] at (rel axis cs: 0.65,0.6) { TFEM };
        \node [black, rotate=35] at (rel axis cs: 0.30,0.17) { slope 2 };


        \end{axis}
        \end{tikzpicture}%
    \begin{tikzpicture}%
        \begin{axis}[
            xlabel={$h$},
            ylabel={$|u - \tilde{u}_h|_{H^1(\Omega \setminus \B)}$},
            xtick={0.01, 0.1, 1},
            ytick={0.01, 0.1, 1, 10},
            xticklabel style={above, yshift=1ex, xshift=1ex},
            ymin=0.004, 
            ymax=25,
            legend pos=north west,
            ymajorgrids=true,
            grid=none,
            xmode=log,
            ymode=log,
            width=0.56\textwidth,
            height=0.47\textwidth,
            x label style={at={(axis description cs:0.5,-0.0)},anchor=north},
            y label style={at={(axis description cs:0.25,0.85)},rotate=-90,anchor=south},
        ]
        
        \addplot[
            color=blue,
            mark=*,
            ]
            coordinates {
                (0.1, 1.3552792073254205) (0.05, 0.6965074297298216) (0.025, 0.3504682548133527) (0.0125, 0.17497862450947912) (0.00625, 0.08731890748270642) (0.003125, 0.043593499479793095)
            }; 

        \addplot[
            color=orange,
            mark=*,
            ]
            coordinates {
                (0.1, 1.3740144993207315) (0.05, 0.6955214040148843) (0.025, 0.35008069052992186) (0.0125, 0.17456221667309715) (0.00625, 0.08723099270892716) (0.003125, 0.04359312653247345)
            }; 
        
        \addplot[color=black, mark=none] 
            coordinates {(0.015, 0.090) (0.004, 0.024)};

        \node [blue, rotate=20] at (rel axis cs: 0.65,0.62) { Conforming };
        \node [orange, rotate=20] at (rel axis cs: 0.65,0.47) { TFEM };
        \node [black, rotate=20] at (rel axis cs: 0.33,0.25) { slope 1 };
            
        \end{axis}
        \end{tikzpicture}
        \caption{Convergence of the error in the $L^2$ norm (left) and $H^1$ semi-norm (right) for a manufactured elasticity problem.}
        \label{fig:conv_elasticity}
\end{figure}\\
\subsection{Mesh mortaring}
    The TFEM method is equivalent to a mortaring method in which the solution jump is penalized, cf. Section \ref{section:zero-measure}. Visually, as shown in Figure \ref{fig:meshes}, the mesh resembles a non-conforming mesh. This raises the possibility that the method could be valuable for mortaring non-conforming meshes within a standard FEM code, allowing for the simulation of specific regions of the domain with a more refined mesh without the need to ensure connectivity between different zones during mesh generation. The objective of the following numerical experiment is to compare the results between our approach and a conformal mesh. In this experiment, one half of the domain is meshed five times more finely than the other, as illustrated in Figure \ref{fig:mortar_sol}. We compare the TFEM approach to classical FEM, where a mesh with a transitional region is considered.
\begin{figure}[!ht]
    \centering
    \begin{subfigure}{0.485\textwidth}
        \centering
        \includegraphics[width=\linewidth]{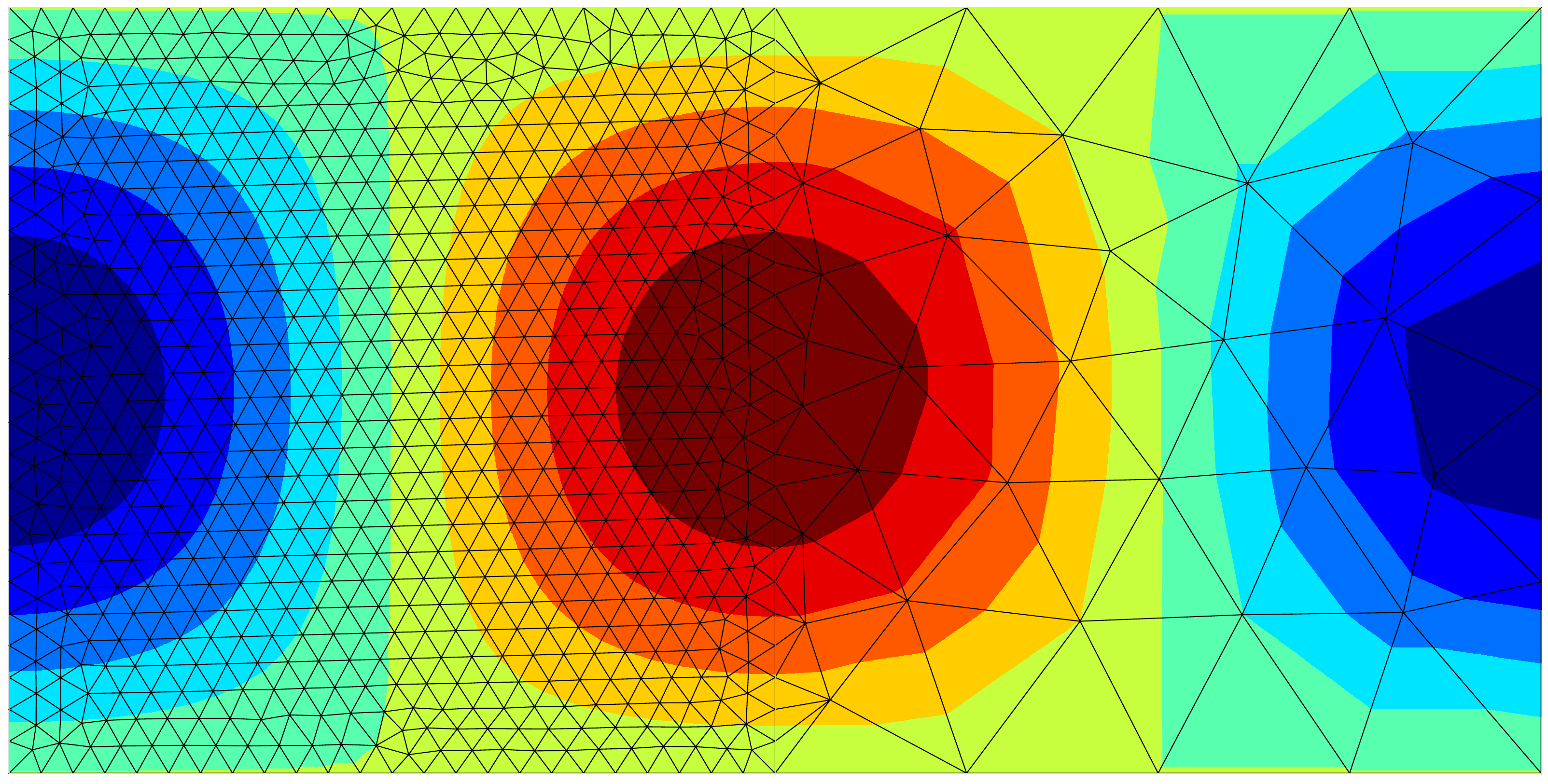}
    \end{subfigure}
    \begin{subfigure}{0.485\textwidth}
        \begin{tikzpicture}
            \node[inner sep=0pt] at (0,0) { \includegraphics[width=\linewidth]{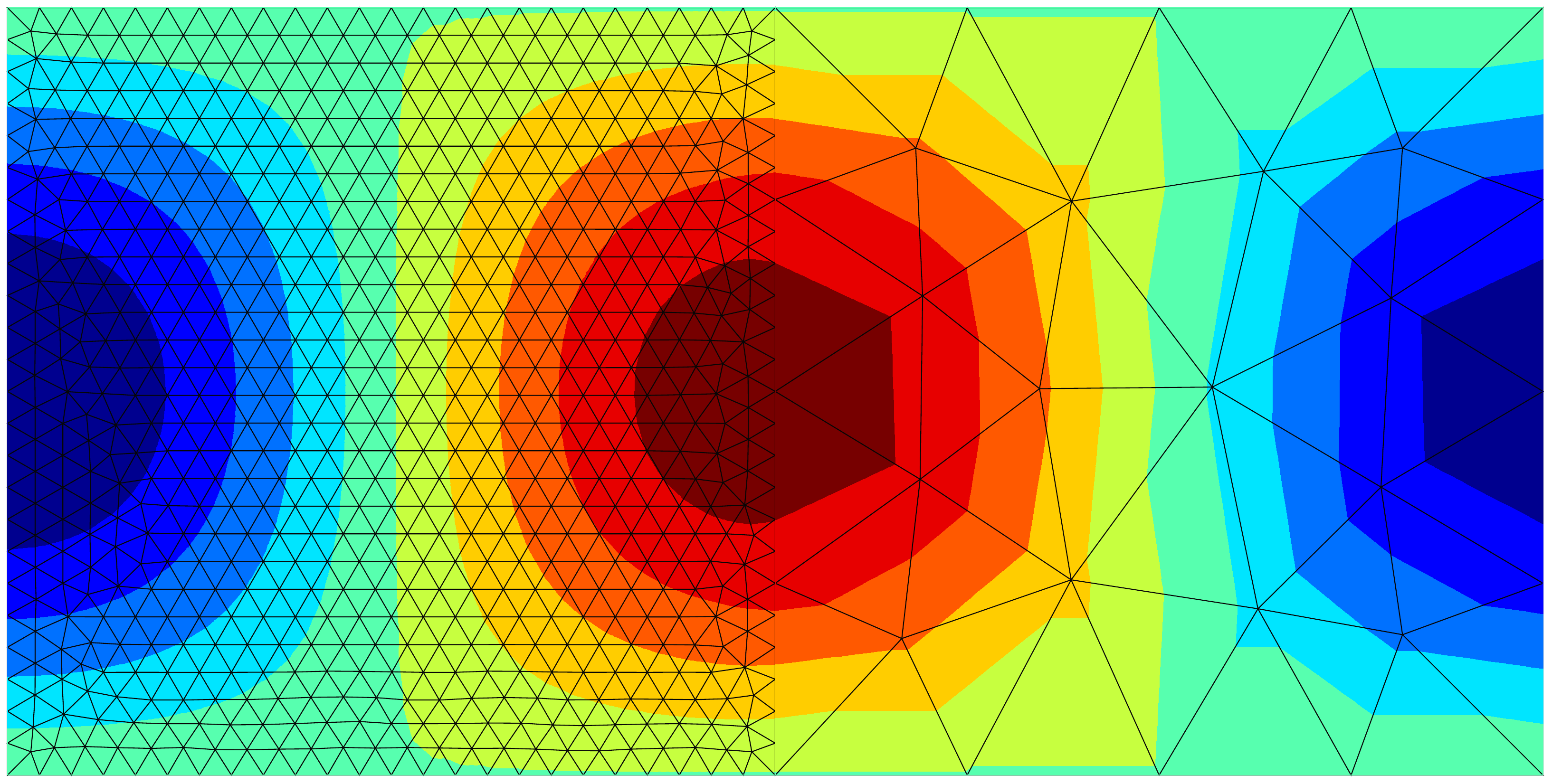}};
            \node[inner sep=0pt] at (0.42\linewidth,0.08\linewidth) { \includegraphics[width=0.12\linewidth]{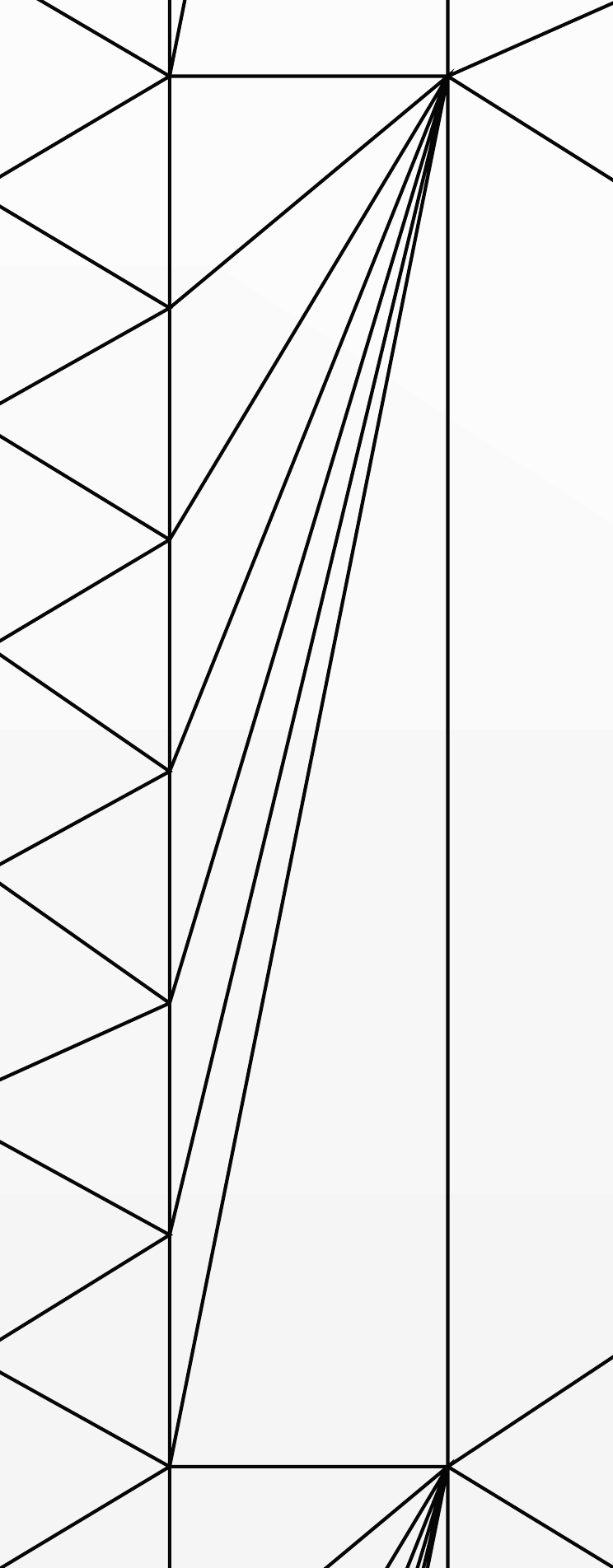}};
            
            \draw[color=white,very thick] (0.36\linewidth,-0.075\linewidth) rectangle (0.48\linewidth,0.235\linewidth);
            \draw[color=white,thick] (-0.018\linewidth,-0.01\linewidth) rectangle (0.018\linewidth, 0.13\linewidth);
            
            \draw[color=white, thick] (0.018\linewidth,-0.01\linewidth) -- (0.36\linewidth,-0.075\linewidth);
            \draw[color=white, thick] (0.018\linewidth,0.13\linewidth) -- (0.36\linewidth,0.235\linewidth);
        \end{tikzpicture}
    \end{subfigure}
    \caption{Mesh and solution for a classical FEM approach (left) and with the TFEM approach (right).}
    \label{fig:mortar_sol}
\end{figure}\\
The split view of the mesh in Figure \ref{fig:mortar_sol} (right) demonstrates how the two parts of the mesh are connected using degenerate elements. These are collapsed to have exactly zero width $\width$ and treated using TFEM. These elements are generated in a straightforward manner, ensuring simplicity of implementation.

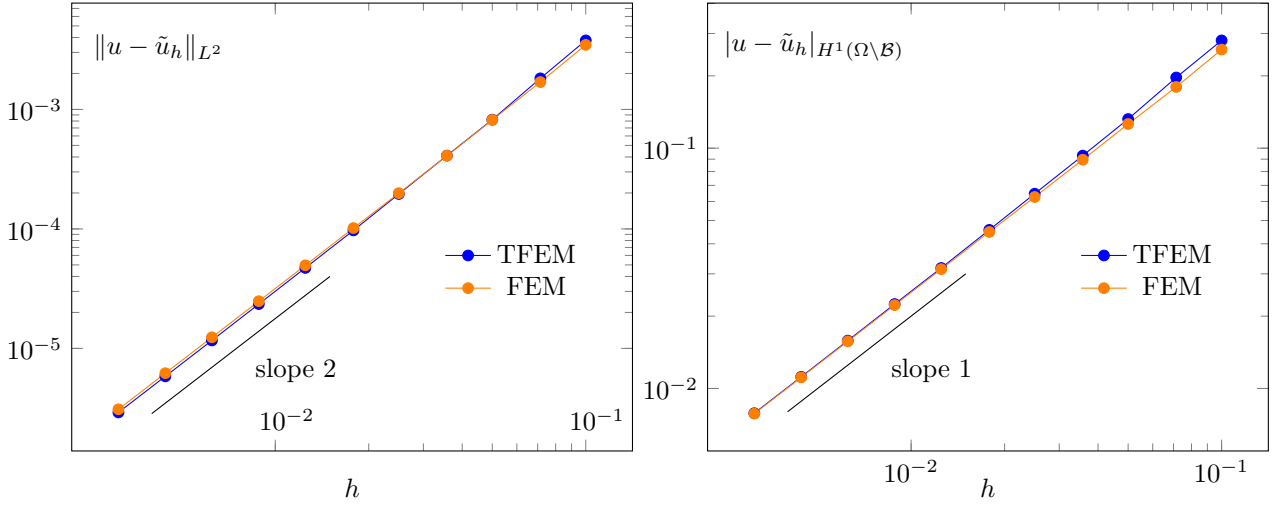
\begin{figure}[!ht]
    \hspace{-0.75cm}%
    \begin{tikzpicture}
        \begin{axis}[
            xlabel={$h$},
            ylabel={$\|u - \tilde{u}_h\|_{L^2}$},
            xtick={0.01, 0.1},
            xticklabel style={above, yshift=1ex, xshift=1ex},
            ytick={0.00001, 0.0001, 0.001, 0.01, 0.1},
            legend style={at={(0.65,0.4)},anchor=west},
            ymajorgrids=true,
            grid=none,
            xmode=log,
            ymode=log,
            width=0.56\textwidth,
            height=0.47\textwidth,
            x label style={at={(axis description cs:0.5,-0.04)},anchor=north},
            y label style={at={(axis description cs:0.155,0.85)},rotate=-90,anchor=south},
            legend style={draw=none}
        ]
        
        \addplot[
            color=blue,
            mark=*,
            ]
            coordinates {
                (0.1000000000, 0.0037821717168802357) (0.0714285714, 0.0018210556292122325) (0.0500000000, 0.0008216254348959442) (0.0357142857, 0.00041080879085906636) (0.0250000000, 0.00019604634895768828) (0.0178571429, 9.71770490938907e-05) (0.0125000000, 4.717411926376751e-05) (0.0088495575, 2.3498982556892935e-05) (0.0062500000, 1.165359157803714e-05) (0.0044247788, 5.836543130218828e-06) (0.0031250000,  2.9110517977460716e-06) 
            };
            
        \addplot[
            color=orange,
            mark=*,
            ]
            coordinates {
                (0.1000000000, 0.0034701829) (0.0714285714, 0.0016943771) (0.0500000000, 0.0008173497) (0.0357142857, 0.0004095590) (0.0250000000, 0.0001991787) (0.0178571429, 0.0001017064) (0.0125000000, 0.0000495798) (0.0088495575, 0.0000248433) (0.0062500000, 0.0000123848) (0.0044247788, 0.0000062028) (0.0031250000, 0.0000030923)
            };

        \addplot[color=black, mark=none] 
            coordinates {(0.015, 0.00004) (0.004, 0.0000028448)};
        
        \node [black, rotate=0] at (rel axis cs: 0.4,0.18) { slope 2 };


        \legend{TFEM, FEM}

        \end{axis}
        \end{tikzpicture}%
    \begin{tikzpicture}%
        \begin{axis}[
            xlabel={$h$},
            ylabel={$|u - \tilde{u}_h|_{H^1(\Omega \setminus \B)}$},
            xtick={0.01, 0.1, 1},
            ytick={0.01, 0.1, 1, 10},
            legend style={at={(0.65,0.4)},anchor=west},
            ymajorgrids=true,
            grid=none,
            xmode=log,
            ymode=log,
            width=0.56\textwidth,
            height=0.47\textwidth,
            x label style={at={(axis description cs:0.5,-0.04)},anchor=north},
            y label style={at={(axis description cs:0.19,0.85)},rotate=-90,anchor=south},
            legend style={draw=none}
        ]
        
        \addplot[
            color=blue,
            mark=*,
            ]
            coordinates {
                (0.1000000000, 0.28084802894009586) (0.0714285714, 0.19694682061423252) (0.0500000000, 0.13218668930658264) (0.0357142857, 0.09312972550440507) (0.0250000000, 0.06465135501982358) (0.0178571429, 0.04570545837010874) (0.0125000000, 0.03172976410099902) (0.0088495575, 0.022456852222826566) (0.0062500000, 0.015818497569468792) (0.0044247788, 0.011181431373388563) (0.0031250000, 0.007891771794919022) 
            };
            
        \addplot[
            color=orange,
            mark=*,
            ]
            coordinates {
                (0.1000000000, 0.2572831523) (0.0714285714, 0.1798048120) (0.0500000000, 0.1260804348) (0.0357142857, 0.0895468438) (0.0250000000, 0.0626357136) (0.0178571429, 0.0448076216) (0.0125000000, 0.0313731802) (0.0088495575, 0.0222298001) (0.0062500000, 0.0157086182) (0.0044247788, 0.0111247495) (0.0031250000, 0.0078583967)    
            };

        \addplot[color=black, mark=none] 
        coordinates {(0.015, 0.03) (0.004, 0.008)};
        
        \node [black, rotate=0] at (rel axis cs: 0.4,0.18) { slope 1 };

        \legend{TFEM, FEM}
        \end{axis}
        \end{tikzpicture}
    \caption{Convergence of the error in the $L^2$ norm (left) and the $H^1$ semi-norm (right) for the TFEM mortaring of two meshes and the classical FEM with a transitional mesh.}
    \label{fig:mortar_conv}
\end{figure}
A good convergence rate is achieved for both types of meshes, and the error of our approach is very similar to that of a conforming mesh as we can see on Figure \ref{fig:mortar_conv}. 
Unlike some mortar methods, TFEM does not require differentiating between the sides of the mortaring edges as well as a specific integation scheme on the sides. This makes mortaring using TFEM straightforward, even for complex domain divisions.

\subsection{High order elements}
    To ensure the method's versatility, it is crucial to determine whether the TFEM is still effective with high-order elements. In this section, we investigate the convergence behavior of high-order elements and examine the impact of the first and second derivatives on the optimal choice of $J_{\text{min}}$.
\input{coeff_high_order_2D.tex}\\
    Figure \ref{fig:coeff_high_order_2D} illustrates the sensitivity of the $L^2$ and $H^1$ errors to the choice of $\Jmin$ for elements of orders 1 through 4. The error behavior is consistent across different orders, with a notable plateau observed in the $H^1$ error even for higher-order elements.  Black crosses indicate a heuristically fitted choice of $\Jmin = C h^{(2.2 + 0.8 \, \text{order})}$. As we can see on these graphs, this formula provides an effective selection for $\Jmin$.\\
\input{convergence_high_order_2D.tex}\\
    As illustrated in Figure \ref{fig:conv_high_order_2D}, this choice of $\Jmin$ preserves the good convergence rate of high-order elements in 2D. The solution obtained with our TFEM approach for a degenerate mesh (solid line) is compared to the classical solution on a regular mesh (dashed line). For both the $L^2$ and $H^1$ errors, our approach performs comparably to the method on a regular mesh. The theoretical analysis of higher order TFEM will be performed in a forthcoming paper.
    
\subsection{Advection}
So far, we were focused on diffusion operators.  In this section, we briefly investigate advection. The advection equation is known to pose stability challenges when using continuous finite elements, which necessitates the implementation of stabilization techniques. To address this, we apply the Streamline Upwind/Petrov-Galerkin (SUPG) method.
The stabilized advection equation weak form is to find $\phi$ satisfying \eqref{eq:sup} for all $\psi$:
\begin{equation}
    \int_{\Omega} \mathbf{v} \cdot \nabla \phi \, \left( \psi + \tau_{\text{SUPG}} \mathbf{v} \cdot \nabla \psi \right) \dx  \label{eq:sup}
\end{equation}
Let us note that for this equation the TFEM approach only needs to be applied on the SUPG stabilization term and not the advection one. Indeed, when the advection local matrix is computed, the determinant $\J$  of the transformation between the reference element and the degenerate element appears in the denominator for the computation of the gradient of the shape function and in the numerator for the computation of the integral. They cancel out and no $\J$ is left in the final advection matrix.\\
\input{advection_problem}\\
We consider a test case with a circular velocity field, denoted as $\mathbf{v}$. 
The boundary conditions for this case correspond to a Dirichlet condition on the boundary $\Gamma_D$ defined by a smoothstep function as we can see in Figure \ref{fig:adv_problem}.\\
We test the advection problem on two types of meshes: a regular reference mesh and a mesh containing a band of caps, where the TFEM approach is applied. The convergence of the error is analyzed by plotting the error as a function of mesh characteristic length $h$. The results, shown in the convergence graph of Figure \ref{fig:conv_advection} (left), indicate that the error converges at the expected rate for both $L^2$ and $H^1$ norms, validating the accuracy and robustness of the TFEM approach when applied to advection problems.
\input{advection_conv.tex}
\newpage

\section{Conclusion}

In this work we have developed a new method called the Tempered Finite Element Method (TFEM), which increases the flexibility of finite elements with respect to meshing. The method is simple to implement within any finite element code: it simply consists to avoiding division by (almost) zero in the evaluation of FEM matrix entries. We have shown through numerical experiments that on meshes with degenerate elements the method provides an error close to the one of finite elements on regular meshes and that it converges at the same rate. We have also proven this convergence theoretically for Poisson's problem and linear elements. This method is however not only useful for solving a Laplacian. It works just as well for linear elasticity, advection, and high-order elements and advection. For exactly zero-measure elements, we show that TFEM corresponds to non-conforming mesh mortaring. We are particularly interested in solving on meshes containing a so-called band of caps. This structure mimics a finely resolved one-dimensional interface in the two-dimensional domain. The TFEM approach is able to completely resolve the locking phenomenon that standard finite elements undergo in this situation.

A crucial point of the scheme is the choice of a parameter called $\Jmin$, which is the minimal value of element Jacobians that we are willing to tolerate in denominators when evaluating FEM matrix entries. Our numerical experiments and the mathematical theory have identified the dependence of $\Jmin$ on $h$, the mesh size. However, the optimal practical choice of $C$ in the expression $\Jmin = C h^{d+1}$ in $\mathbb{R}^d$ remains to be determined. The mathematical study of the scheme provides a theoretically optimal choice of $C$. However, we have observed numerically that it was not always the best possible in practice. Nevertheless, the sensitivity analysis shows that the method is robust as the choice of $C$ could vary by at least an order of magnitude around the minimum without being too far from the classical finite element error. Although the choice of the optimal $C$ should be the subject of further research, the method can already be used. We show this by performing 100 tests on random functions consisting of a sum of linear, quadratic, sine and cosine functions of random amplitude and frequency, and a Gaussian function of random amplitude and position. The domains considered were rectangles of random sizes containing a band of caps of variable length and we considered the simple choice $\Jmin = h^3$, i.e. $C=1$. The relative errors in the $L^2$ norm and in the $H^1$ semi-norm are shown in Figure \ref{fig:random}. It can be seen that, despite the simple choice of $C=1$, the error of the TFEM method is close to the error of the FEM method on a regular mesh. In the $H^1$ semi-norm, both errors are almost identical. 

The TFEM method could save considerable time and effort in mesh generation by reducing the mesh quality required to perform finite element analysis.
\input{random_test.tex}

\newpage
\section*{Acknowledgment}
This project has received funding from the European Research Council (ERC) under the European Union’s Horizon research and
innovation program (Grant agreement No. 101 071 255).

\appendix
\section{Appendix: Study of the FEM stiffness matrix} \label{apx:stiff}
Let's study the stiffness matrix related to the Laplace operator on
a triangle (see Figure \ref{fig:cap_apx}):
$$K_{ij} = \int_{T} \nabla \phi_i \cdot \nabla \phi_j \;
dx~~~,~~~i,j=1,2,3.$$
Assume three scalar values $u_i$, $i=1,2,3$, associated to nodes located at 
$x_i$, $i=1,2,3$. The stiffness matrix, $[K]$, allows one to write the quadratic expression
of the energy:
\begin{equation}
f(u_1,u_2,u_3) = \frac12 \sum_{i=1}^3\sum_{j=1}^3K_{ij} u_i u_j.
\label{eq:quadf_bis}
\end{equation}
It turns out that  $[K]$  only depends on the 
triangle angles denoted $\theta_i$'s:
\begin{eqnarray}
    \begin{bmatrix} K \end{bmatrix}
&=& 
    \begin{bmatrix}
    \cot(\theta_2)+\cot(\theta_3)  & -\cot(\theta_3) & -\cot(\theta_2) \\ 
    -\cot(\theta_3) &\cot(\theta_1)+\cot(\theta_3) &  -\cot(\theta_1) \\ 
    -\cot(\theta_2)&  -\cot(\theta_1) &  \cot(\theta_1)+\cot(\theta_2)  
\end{bmatrix} 
=  
    \begin{bmatrix}
    c_2+c_3  & -c_3 & -c_2 \\ 
    -c_3 &c_1+c_3 &  -c_1 \\ 
    -c_2&  -c_1 &  c_1+c_2  
    \end{bmatrix}.
\label{eq:cot}
\end{eqnarray}

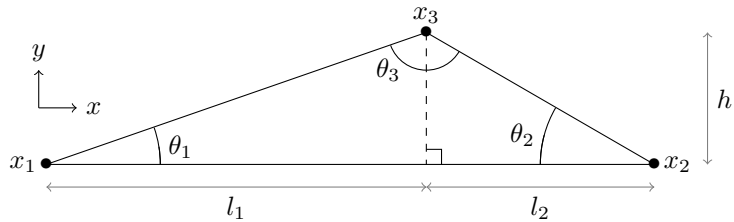
\begin{figure}[!ht]
\begin{center}
        \begin{tikzpicture}
            \draw (0,0) node {$\bullet$} ;
            \draw (5,1.75) node {$\bullet$} ;
            \draw (8,0) node {$\bullet$} ;
            \draw (5, 0.2) -- (5.2, 0.2) -- (5.2, 0);
            \draw[] (0,0)  node[left]{${x}_1$} --  (5,1.75) node[above]{${x}_3$} --  (8,0)  node[right]{${x}_2$} -- (0,0);
            \draw[->] (-0.1,0.75)--(0.4,0.75) node[right]{$x$};
            \draw[->] (-0.1,0.75)--(-0.1,1.25) node[above]{$y$};
            \draw[dashed] (5,1.75)--(5,0.) ;
            \draw[<->,color=gray] (0,-0.3)--(5,-0.3) ;
            \draw (2.5,-0.3)node[below]{$l_1$} ;
            \draw[<->,color=gray] (5,-0.3)--(8,-0.3) ;
            \draw (6.5,-0.3)node[below]{$l_2$} ;
            \draw[<->,color=gray] (8.7,0.)--(8.7,1.75) ;
            \draw (8.7,0.875)node[right]{$h$} ;
            \draw[] (1.5,0) arc (0:9:1.5) node[right]{$\theta_1$} ;
            \draw[] (1.5,0) arc (0:19:1.5) ;
            \draw[] (6.5,0) arc (180:165:1.5) node[left]{$\theta_2$} ;
            \draw[] (6.5,0) arc (180:150:1.5);
            \draw[] (5,1.25) arc (-90:-30:0.5);
            \draw[] (5,1.25) arc (-90:-160:0.5);
            \draw[] (4.5, 1.25) node{$\theta_3$};
        \end{tikzpicture}
    \end{center}
    \caption{A triangle $T$. The angle 
    $\theta_i$ is associated to the vertex $x_i$.\label{fig:cap_apx}}
\end{figure}
Without loss of generality, we  assume  that $\theta_3 \geq \theta_2 \geq \theta_1$. 
Edge $({x}_1, {x}_2)$ is thus the longest edge of T (see
Figure \ref{fig:cap_apx}) with $\|{x}_2 - {x}_1\| = l_1+l_2$. 
We define the element flatness $f$ and its symmetry  $s$ with the following expression
\begin{equation}
    f = \frac{h}{l_1+l_2}, \quad s = \frac{l_1}{l_1+l_2}, \quad f \in\left[0,
      {\sqrt{3}/2}\right], \quad s \in [ 0, 1 ].
    \label{eq:f_bis}
\end{equation}
The cotangents may be expressed as a function of $f$ and $s$:
$$    c_1 = \frac{s}{f}, \quad c_2 = \frac{1-s}{f}, \quad c_3 = f -
\frac{s(1-s)}{f}.$$
Eigenvalues $\lambda_1 \leq \lambda_2 \leq \lambda_3$ of $[K]$ can be expressed as functions of $c =
c_1+c_2+c_3 = f + \frac{ 1 - s + s^2}{f}$ as:
\begin{equation}
{    \lambda_1 = 0, \quad \lambda_2 = c - \sqrt{c^2 - 3}, \quad 
    \lambda_3 = c + \sqrt{c^2 - 3}.}\nonumber
\end{equation}
whereas the corresponding eigenvectors are
\begin{align}
    {\bf v}_1 & = [ 1, 1, 1], \nonumber \\
    {\bf v}_2 & = [ c_3 - c_1 - \sqrt{c^2 - 3}, -c_3 + c_2 + \sqrt{c^2 - 3} , c_1 - c_2], \nonumber \\
    {\bf v}_3 & = [ 1 - \alpha,  \alpha, -1], \quad \alpha = \frac{2 c_1 -c_2 - c_3   + \sqrt{c^2 - 3}}{c_1 + c_2 - 2 c_3 + 2 \sqrt{c^2 - 3}} \in [0,1].\nonumber 
\end{align}
Finally, the energy $e_i$ associated to each mode is given by $e_i =  \lambda_i \| {\bf v}_i \|^2/2 $.
\begin{align*}
e_1&= 0, \\
e_2&=\frac{(c - \beta)}{2} \left[(c_1 - c_2)^2 + (-c_1 + c_3 - \beta)^2 + (c_2 - c_3 + \beta)^2\right], \\
e_3&= \frac{(c - \beta)}{2} \left[\left((1 - \frac{2c_1 - c_2 - c_3 + \beta}{c_1 + c_2 - 2c_3 + 2\beta}\right)^2 + 1 + \left( \frac{2c_1 - c_2 - c_3 + \beta}{c_1 + c_2 - 2c_3 + 2\beta} \right)^2\right], \\
\text{with } \beta &= \sqrt{c^2 - 3}.
\end{align*}
We note that the first mode is the rigid
mode with zero energy.
We are interested in the limit case  
$f \rightarrow
0$ for which the element reaches zero measure. 
It is possible to show using a Taylor expansion that as $f \rightarrow 0$ we have
\begin{equation}{\lambda_2 \sim \frac{3}{2} \frac{f}{1-s+s^2}
    \quad \text{and} \quad \lambda_3 \sim  2 \frac{ 1 - s + s^2}{f}}. 
\label{eq:eig_bis}
  \end{equation}

\section{Appendix: Analysis of tempered FEM on zero-measure elements} \label{apx:zero_tfem}
Now we analyze the limiting scheme (\ref{eq:FEM_zero}) and prove its convergence. There will be two technical issues with this analysis:
\begin{itemize}
\item The scheme is not consistent and we must deal with a consistency error.
\item As noted in Remark \ref{rem:quadrature}, the second term in (\ref{eq:FEM_zero}) is essentially a quadrature approximation of an integral, while the non-consistency term will be in integral form and this discrepancy must be dealt with.
\end{itemize}

First, we detail the equation that the exact solution $u$ satisfies on $\Omega\setminus\Gamma$, if we use $v_h\in V_h^\Gamma$ as a test function.

\begin{lemma}
\label{lem:cont_sol}
The exact solution $u$ of (\ref{eq:problem}) satisfies
\begin{equation}
\label{lem:cont_sol_eq}
\int_{\Omega\setminus\Gamma}\nabla u\cdotp\nabla v_h \dx -\int_\Gamma\nabla u\cdotp n_\Gamma[v_h]\dS +\tD\frac{h}{2}\sum_{x_i\in\Gamma}[u]_{x_i}[v_h]_{x_i} =(f,v_h),
\end{equation}
for all $v_h\in V_h^\Gamma$.
\end{lemma}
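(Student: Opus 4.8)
The plan is to derive equation \eqref{lem:cont_sol_eq} by testing the strong form \eqref{eq:problem} against $v_h\in V_h^\Gamma$ and integrating by parts elementwise, treating the extra penalty term as an exact (not approximate) addition. The key point is that $u$ is the \emph{smooth} exact solution, so $u$ has no jump across $\Gamma$ and $[u]_{x_i}=0$ for every node $x_i\in\Gamma$; hence the last term on the left-hand side of \eqref{lem:cont_sol_eq} is simply zero, and we are really just asserting that $u$ satisfies the standard weak identity with a broken test function plus a normal-flux term along $\Gamma$.

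First I would start from $-\Delta u = f$ on $\Omega$, multiply by $v_h$, and integrate over $\Omega\setminus\Gamma$, which I then split as a sum over the elements $K\in\Th^\Gamma$ (note $v_h$ is piecewise $P^1$ and globally continuous \emph{except} on $\Gamma$, so it is smooth on each element). On each $K$, Green's formula gives
\begin{equation*}
-\int_K \Delta u\, v_h \dx = \int_K \nabla u\cdotp\nabla v_h \dx - \int_{\partial K}\nabla u\cdotp n_K\, v_h \dS .
\end{equation*}
Summing over all elements, the boundary integrals over interior edges \emph{not} lying on $\Gamma$ cancel in pairs because $v_h$ is continuous there and the outward normals are opposite; the contributions on $\partial\Omega$ vanish since $v_h|_{\partial\Omega}=0$; and the only surviving interface contributions are those along $\Gamma$. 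Since $\nabla u$ is continuous across $\Gamma$ (as $u\in \WIIi(\Omega)$), the two one-sided edge integrals along $\Gamma$ combine, with the jump $[v_h]$ appearing, into $-\int_\Gamma \nabla u\cdotp n_\Gamma [v_h]\dS$ with the sign fixed by the chosen orientation of $n_\Gamma$ and the definition $[f]=f^--f^+$. This yields
\begin{equation*}
\int_{\Omega\setminus\Gamma}\nabla u\cdotp\nabla v_h \dx -\int_\Gamma\nabla u\cdotp n_\Gamma[v_h]\dS = (f,v_h).
\end{equation*}

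Finally I would add $\tD\frac{h}{2}\sum_{x_i\in\Gamma}[u]_{x_i}[v_h]_{x_i}$ to both sides, which changes nothing because each $[u]_{x_i}=0$, arriving at \eqref{lem:cont_sol_eq}. I do not expect a genuine obstacle here; the only points requiring mild care are the bookkeeping of signs in the $\Gamma$-integral (orientation of $n_\Gamma$ versus the jump convention) and making explicit that the elementwise integration by parts is legitimate, i.e. that $v_h$ is indeed smooth enough on each $K$ and that $u\in\WIIi(\Omega)$ has a well-defined trace of $\nabla u\cdotp n_\Gamma$ on $\Gamma$ with no jump. This lemma is essentially the consistency statement that will later be subtracted from the discrete scheme \eqref{eq:FEM_zero} to isolate the consistency error, which (per Remark \ref{rem:quadrature}) will take the form of the difference between $\int_\Gamma\nabla u\cdotp n_\Gamma[v_h]\dS$ and nothing (it has no discrete counterpart) together with the quadrature discrepancy between $\int_\Gamma[u_h][v_h]\dS$ and its trapezoidal approximation.
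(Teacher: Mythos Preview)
Your proposal is correct and follows essentially the same approach as the paper: multiply \eqref{eq:problem} by $v_h$, integrate by parts on $\Omega\setminus\Gamma$ (Green's theorem) to produce the $\Gamma$-flux term with the jump $[v_h]$, and then add the vanishing penalty sum using $[u]_{x_i}=0$. Your version is slightly more explicit about the elementwise cancellation of interior edge contributions, but the argument is the same.
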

\begin{proof}
We multiply problem (\ref{eq:problem}) by $v_h$, integrate over $\Omega\setminus\Gamma$ and apply Green's theorem. Since $v_h$ is in general discontinuous on $\Gamma$, we obtain
\begin{equation}
\int_{\Omega\setminus\Gamma}\nabla u\cdotp\nabla v_h \dx -\int_\Gamma\nabla u\cdotp n_\Gamma[v_h]\dS =(f,v_h).
\end{equation}
Since $[u]=0$ on $\Gamma$, we can add the last left-hand side term in (\ref{lem:cont_sol_eq}) to the formulation as it is equal to zero.
\end{proof}

Now we address the second problem.

\begin{lemma}
\label{lem:quadrature}
Let $f:\Gamma\to\IR$ be a piecewise linear function on the partition $x_i, i=0,\ldots,N+1,$ such that $f(x_0)=f(x_{N+1})=0$. Then
\begin{equation}
\label{lem:quadrature_est}
\int_\Gamma f^2 \dS\leq \frac{h}{2}\sum_{x_i\in\Gamma} f^2(x_i).
\end{equation} 
\end{lemma}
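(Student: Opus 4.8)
The plan is to reduce the global estimate on $\Gamma$ to a one-dimensional inequality on each subinterval of the partition, on which $f$ is affine. First I would order the partition points as $x_0,x_1,\ldots,x_{N+1}$ along $\Gamma$ and split
\begin{equation*}
\int_\Gamma f^2 \dS = \sum_{i=1}^{N+1}\int_{[x_{i-1},x_i]} f^2 \dS,
\end{equation*}
noting that each subinterval has length $h/2$ (the partition step size, cf. Remark \ref{rem:quadrature}) and that $f$ restricts to an affine function on it.

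The heart of the argument is the elementary fact that for an affine function $g$ on an interval of length $\ell$ with endpoint values $p,q$,
\begin{equation*}
\int_0^\ell g^2 = \ell\,\frac{p^2+pq+q^2}{3}\leq \frac{\ell}{2}\,(p^2+q^2),
\end{equation*}
the inequality being just $2pq\leq p^2+q^2$; equivalently, for the square of an affine function the composite trapezoidal rule overestimates the integral. Applying this on each subinterval with $\ell = h/2$, $p=f(x_{i-1})$, $q=f(x_i)$ gives $\int_{[x_{i-1},x_i]} f^2 \dS \leq \tfrac{h}{4}\bigl(f^2(x_{i-1})+f^2(x_i)\bigr)$.

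Then I would sum over $i=1,\ldots,N+1$ and regroup by node: each interior node lies in two subintervals and the endpoints $x_0,x_{N+1}$ in only one, producing $\tfrac{h}{4}\bigl(f^2(x_0)+f^2(x_{N+1})+2\sum_{i=1}^{N} f^2(x_i)\bigr)$. By the hypothesis $f(x_0)=f(x_{N+1})=0$ the boundary terms vanish, leaving exactly $\tfrac{h}{2}\sum_{x_i\in\Gamma}f^2(x_i)$, which is (\ref{lem:quadrature_est}).

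I do not expect a genuine obstacle here: the only content is the single-interval inequality $p^2+pq+q^2\leq\tfrac32(p^2+q^2)$, and everything else is bookkeeping of the $h/2$ step size and the endpoint cancellation. The one point to state carefully is that the sum on the right of (\ref{lem:quadrature_est}) is to be read over the interior nodes (equivalently over all nodes, since the endpoint contributions are zero), so that it coincides with the composite trapezoidal sum of step $h/2$. If one only assumes that consecutive partition points are at distance at most $h/2$, the same computation still goes through, since each local bound $\tfrac{\ell}{2}(p^2+q^2)$ is then at most $\tfrac{h}{4}(p^2+q^2)$.
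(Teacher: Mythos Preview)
Your proposal is correct and essentially identical to the paper's proof: both compute the integral of the square of an affine function on each subinterval of length $h/2$ (the paper via the Lagrange basis expansion, you via the closed form $\ell(p^2+pq+q^2)/3$), bound the cross term by $2pq\le p^2+q^2$, sum, and use the vanishing endpoint values. Your additional remark about nonuniform subintervals of length at most $h/2$ is a harmless generalization.
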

\begin{proof}
For simplicity, we can consider $\Gamma$ to be the one-dimensional interval $[0,L]$ with corresponding nodes $x_i$. We consider two neighboring points $x_i, x_{i+1}$ of the partition and estimate the integral over this segment of $\Gamma$, which has length $h/2$. Since $f$ is linear on this segment, we can write $f(x)=f(x_i)\phi_i(x)+f(x_{i+1})\phi_{i+1}(x)$, where $\phi_i(x)=\frac{2}{h}(x_{i+1}-x)$ and $\phi_{i+1}(x)=\frac{2}{h}(x-x_{i})$ are the standard Lagrange basis functions in 1D, for which the necessary integrals can be easily evaluated. Therefore
\begin{equation}
\begin{split}
\label{lem:quadrature_1}
&\int_{x_i}^{x_{i+1}} f^2(x) \dx \\ &= \int_{x_i}^{x_{i+1}} f^2(x_i)\phi_i^2(x) +2 f(x_i)f(x_{i+1})\phi_i(x)\phi_{i+1}(x) +f^2(x_{i+1})\phi^2_{i+1}(x)\dx\\
&=\frac{h}{6}f^2(x_{i})+ 2\frac{h}{12}f(x_{i})f(x_{i+1}) +\frac{h}{6}f^2(x_{i+1})\leq \frac{h}{4}f^2(x_{i}) +\frac{h}{4}f^2(x_{i+1}),
\end{split}
\end{equation}
since $2ab\leq a^2+b^2$. Summing (\ref{lem:quadrature_1}) over all $i=0,\ldots,N$ and taking into account $f(x_0)=f(x_{N+1})=0$ gives us (\ref{lem:quadrature_est}).
\end{proof}

We will eventually apply Lemma \ref{lem:quadrature} to the function $f=[\xi_h]$. For this it is important to realize that $[\xi_h]$ is a piecewise linear function on the partition given by the nodes $x_i$. Indeed, $[\xi_h]=\xi_h^--\xi_h^+$, and $\xi_h^-$ is linear on the segments between $x_i, x_{i+2}$ for some $i$, while $\xi_h^+$ is linear on the segment between $x_{i-1}, x_{i+1}$, cf. Figure \ref{fig:Band_zero_measure}. Therefore, their difference $[\xi_h]$ is linear on the intersection, i.e. on the segment between $x_i, x_{i+1}$. Furthermore, $[\xi_h]_{x_0}=[\xi_h]_{x_{N+1}}=0$, since $\xi_h$ is continuous at the endpoints of $\Gamma$.

Next, we need an estimate of the interpolation error on $\Gamma$.

\begin{lemma}
\label{lem:interp_error_gamma}
It holds that
\begin{align}
\label{lem:interp_error_gamma_est1}
\frac{h}{2}\sum_{x_i\in\Gamma}[u-\Pi_h u]_{x_i}^2 &\leq \frac{1}{64}L h^4 \|\gradt^2 u\|_{L^\infty(\Gamma)}^2,\\
\label{lem:interp_error_gamma_est2}
\int_\Gamma[u-\Pi_h u]^2\dS &\leq \frac{1}{16}L h^4 \|\gradt^2 u\|_{L^\infty(\Gamma)}^2,
\end{align}
where $\gradt^2 u$ is the second derivative of $u$ in the direction parallel to $\Gamma$.
\end{lemma}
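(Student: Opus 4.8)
The plan is to prove both estimates by reducing to a one-dimensional interpolation error bound on each segment of $\Gamma$, exactly as in the proof of Lemma \ref{lem:quadrature}. First I would identify $\Gamma$ with the interval $[0,L]$ partitioned by the nodes $x_i$, $i=0,\ldots,N+1$, so that each subinterval $[x_i,x_{i+1}]$ has length $h/2$. The key observation is that $[u-\Pi_h u]$ is, on each such subinterval, the difference of two affine functions on overlapping segments (as already noted in the text before this lemma), so it is affine on $[x_i,x_{i+1}]$; moreover its value at $x_i$ equals $[u]_{x_i}-[\Pi_h u]_{x_i}=-[\Pi_h u]_{x_i}$ since $[u]_{x_i}=0$. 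Hence $[u-\Pi_h u]_{x_i}$ is precisely the jump of the piecewise-linear interpolant of $u$ at $x_i$, which can be rewritten as the difference of the two one-dimensional linear interpolation errors of $u$ restricted to the two sides.

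The core estimate I would establish is the standard one-dimensional bound: if $w\in W^{2,\infty}(x_{i-1},x_{i+1})$ and $\ell$ is its linear interpolant at the endpoints $x_{i-1},x_{i+1}$ (an interval of length $h$), then $\|w-\ell\|_{L^\infty}\leq \tfrac{h^2}{8}\|w''\|_{L^\infty}$. Applying this to $u$ restricted to each side of $\Gamma$ and evaluating at the midpoint-type node $x_i$ gives $|[u-\Pi_h u]_{x_i}|\leq \tfrac{h^2}{8}\|\gradt^2 u\|_{L^\infty(\Gamma)}$ — here I use that the relevant second derivative along $\Gamma$ is $\gradt^2 u$ and that $u$ is globally $W^{2,\infty}$ so the two one-sided second derivatives coincide and are bounded by $\|\gradt^2 u\|_{L^\infty(\Gamma)}$. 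Squaring yields $[u-\Pi_h u]_{x_i}^2\leq \tfrac{h^4}{64}\|\gradt^2 u\|_{L^\infty(\Gamma)}^2$. Summing over the (at most) $2L/h$ interior nodes $x_i\in\Gamma$ and multiplying by $h/2$ gives $\tfrac{h}{2}\sum_{x_i}[u-\Pi_h u]_{x_i}^2\leq \tfrac{h}{2}\cdot\tfrac{2L}{h}\cdot\tfrac{h^4}{64}\|\gradt^2 u\|_{L^\infty(\Gamma)}^2=\tfrac{1}{64}Lh^4\|\gradt^2 u\|_{L^\infty(\Gamma)}^2$, which is \eqref{lem:interp_error_gamma_est1}.

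For \eqref{lem:interp_error_gamma_est2} I would apply Lemma \ref{lem:quadrature} to the function $f=[u-\Pi_h u]$, which is legitimate since (as argued above) $f$ is piecewise linear on the partition $x_i$ and vanishes at the endpoints $x_0,x_{N+1}$. This immediately gives $\int_\Gamma[u-\Pi_h u]^2\dS\leq \tfrac{h}{2}\sum_{x_i\in\Gamma}[u-\Pi_h u]_{x_i}^2\leq \tfrac{1}{64}Lh^4\|\gradt^2 u\|_{L^\infty(\Gamma)}^2$, which is even sharper than the claimed constant $\tfrac{1}{16}$; I would state the bound with $\tfrac{1}{16}$ (or note the improvement). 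The main obstacle is essentially bookkeeping: carefully justifying that $[u-\Pi_h u]$ is affine on each $[x_i,x_{i+1}]$ and that its nodal values are controlled by the one-dimensional interpolation error with the right second-derivative quantity $\gradt^2 u$, together with counting the number of nodes correctly so the factor $L$ (not $L/h$ or $Lh$) appears. Everything else is the elementary $W^{2,\infty}$ interpolation estimate in one dimension.
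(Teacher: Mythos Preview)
Your argument for \eqref{lem:interp_error_gamma_est1} is essentially the paper's, but the paper makes explicit a point you leave hidden: at each $x_i$, exactly one of the two one-sided errors $(u-\Pi_h u)^\pm(x_i)$ is \emph{zero}, because $x_i$ is a vertex of an element adjacent to $\Gamma$ on that side and $\Pi_h u$ interpolates $u$ there. This is what justifies the bound $|[u-\Pi_h u]_{x_i}|\le \tfrac{h^2}{8}\|\gradt^2 u\|_{L^\infty(\Gamma)}$. Your phrasing ``difference of the two one-dimensional linear interpolation errors'' would, taken literally, only give $\tfrac{h^2}{4}$; you should state that one of the two terms vanishes.

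For \eqref{lem:interp_error_gamma_est2} you take a genuinely different and cleaner route. The paper proves the second estimate independently of the first: it splits $[u-\Pi_h u]^2\le 2\big((u-\Pi_h u)^-\big)^2+2\big((u-\Pi_h u)^+\big)^2$ and integrates each one-sided interpolation error directly over edges of length $h$ using Cauchy's remainder formula, bounding $\big((x-x_i)(x-x_{i+2})\big)^2$ by its maximum $(h/2)^4$; this is where the constant $\tfrac{1}{16}$ comes from. Your idea---observe that $[u-\Pi_h u]=-[\Pi_h u]$ is piecewise linear on the partition $\{x_i\}$ and vanishes at the endpoints, then apply Lemma~\ref{lem:quadrature} to reduce \eqref{lem:interp_error_gamma_est2} to \eqref{lem:interp_error_gamma_est1}---is shorter and yields the sharper constant $\tfrac{1}{64}$. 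The paper itself remarks after its proof that the constant in \eqref{lem:interp_error_gamma_est2} can be improved; your argument delivers that improvement for free.
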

\begin{proof}
\textit{First estimate:} Since each $x_i$ is the vertex of an element neighboring $\Gamma$, and $\Pi_h u$ interpolates $u$ at vertices, we have either $(u-\Pi_h u)^-(x_i)=0$ or $(u-\Pi_h u)^+(x_i)=0$. Let e.g. the latter case hold. The point $x_i$ is the midpoint of the edge of an adjacent element $K^-$ and $(u-\Pi_h u)^-(x_i)$ is simply the interpolation error of the 1D linear Lagrange interpolation of $u$ on the edge of $K^-$ with endpoints $x_{i-1},x_{i+1}$. Cauchy's theorem for the remainder of linear Lagrange interpolation on an interval can be applied, cf. \cite{Davis}, which guarantees the existence of a point $\zeta$ on the edge, such that
\begin{equation}
\big|(u-\Pi_h u)^-(x_i)\big| = \frac{1}{2} \big|\gradt^2 u(\zeta) (x_i-x_{i-1})(x_i-x_{i+1})\big| \leq \frac{1}{2} \|\gradt^2 u\|_{L^\infty(\Gamma)}\frac{h}{2} \frac{h}{2}.
\end{equation}
Using this estimate, we get 
\begin{equation}
\begin{split}
&\frac{h}{2}\sum_{x_i\in\Gamma}[u-\Pi_h u]_{x_i}^2 \leq \frac{h}{2}\sum_{x_i\in\Gamma} \big(\tfrac{1}{8}h^2\|\gradt^2 u\|_{L^\infty(\Omega)}\big)^2\\ &\quad =\frac{1}{64}h^4\|\gradt^2 u\|_{L^\infty(\Omega)}^2 \sum_{x_i\in\Gamma}\frac{h}{2}  =\frac{1}{64}L h^4 \|\gradt^2 u\|_{L^\infty(\Omega)}^2,
\end{split}
\end{equation}
since $\sum_{x_i\in\Gamma}\frac{h}{2}=L$. This gives us (\ref{lem:interp_error_gamma_est1}).

\medskip
\textit{Second estimate:} 
We have
\begin{equation}
\begin{split}
\label{lem:interp_error_gamma_est:3}
\int_\Gamma[u-\Pi_h u]^2\dS &=\int_\Gamma\big((u-\Pi_h u)^--(u-\Pi_h u)^+\big)^2\dS\\
&\leq 2\int_\Gamma\big((u-\Pi_h u)^-\big)^2\dS +2\int_\Gamma\big((u-\Pi_h u)^+\big)^2\dS.
\end{split}
\end{equation}
The right-hand side integrals can be written as the sum over edges of neighboring elements that lie on $\Gamma$. On these edges, $u-\Pi_h u$ is simply the error of the 1D linear Lagrange interpolation of $u$ on the edge. Similarly as in the proof of Lemma \ref{lem:quadrature}, we can consider $\Gamma$ to be the one-dimensional interval $[0,L]$ with corresponding nodes $x_i$. An edge of an element neighboring $\Gamma$ is an interval $[x_i,x_{i+2}]$ of length $h$, and Cauchy's theorem for the remainder of linear Lagrange interpolation gives for each $x$ a $\zeta_x$ such that
\begin{equation}
\begin{split}
\label{lem:interp_error_gamma_est:4}
\int_{x_i}^{x_{i+2}}\big(u-\Pi_h u)^2\dS &= \int_{x_i}^{x_{i+2}} \bigg(\frac{1}{2} \gradt^2 u(\zeta_x) (x-x_{i})(x-x_{i+2})\bigg)^2\dS\\
&\leq \frac{1}{4} \|\gradt^2 u\|_{L^\infty(\Gamma)}^2 \bigg(\frac{h}{2}\bigg)^4 \int_{x_i}^{x_{i+2}} \dS,
\end{split}
\end{equation}
since $\big((x-x_{i})(x-x_{i+2})\big)^2$ attains its maximum at $x=x_{i+1}$ with a value of $(h/2)^4$. When we sum (\ref{lem:interp_error_gamma_est:4}) over all elements neighboring $\Gamma$, the last integral in (\ref{lem:interp_error_gamma_est:4}) sums up to $L$. Using this estimate in (\ref{lem:interp_error_gamma_est:3}) gives us (\ref{lem:interp_error_gamma_est2}).
\end{proof}

We note that the constant in (\ref{lem:interp_error_gamma_est2}) can be improved, e.g. by evaluating the integral of $\big((x-x_{i})(x-x_{i+2})\big)^2$ in (\ref{lem:interp_error_gamma_est:4}) exactly, instead of estimating it. This gives an improved constant of $1/30$ in  (\ref{lem:interp_error_gamma_est2}). This is purely cosmetic, and only leads to `uglier' fractions in Theorems (\ref{thm:FEM_zero}) and (\ref{thm:FEM_zero_opt}).

\begin{theorem}
\label{thm:FEM_zero}
Let $u\in \WIIi(\Omega)$ and $\tD>0$. Then the solution of (\ref{eq:FEM_zero}) satisfies
\begin{equation}
\begin{split}
\label{thm:FEM_zero:est}
&|u-\tu_h|^2_{H^1(\Omega\setminus\Gamma)} +\tD\int_{\Gamma}[u-\tu_h]^2\dS \\
&\quad\leq 4C_I^2h^2|u|^2_{H^2(\Omega\setminus\Gamma)} + \frac{4}{\tD}L |u|_{\WIi(\Gamma)}^2 +\frac{3}{16}\tD L h^4 \|\gradt^2 u\|^2_{L^\infty(\Gamma)}.
\end{split}
\end{equation}
\end{theorem}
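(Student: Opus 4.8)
\subsection*{Proof proposal for Theorem~\ref{thm:FEM_zero}}

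The plan is to follow the architecture of the proof of Theorem~\ref{thm:FEM_modified}, but to replace Galerkin orthogonality — which fails here on two counts, the scheme \eqref{eq:FEM_zero} being neither consistent nor built on an exact integral of the jump term — by the characterization of the continuous solution from Lemma~\ref{lem:cont_sol}. First I would subtract \eqref{eq:FEM_zero} from \eqref{lem:cont_sol_eq}. The quadrature sum over $u$ is already present in \eqref{lem:cont_sol_eq} and equals zero since $[u]=0$ on $\Gamma$, so the subtraction yields directly the error equation
\[
\int_{\Omega\setminus\Gamma}\nabla(u-\tu_h)\cdotp\nabla v_h\dx-\int_\Gamma(\nabla u\cdotp n_\Gamma)[v_h]\dS+\tD\frac h2\sum_{x_i\in\Gamma}[u-\tu_h]_{x_i}[v_h]_{x_i}=0
\]
for all $v_h\in V_h^\Gamma$. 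Introducing $\xi_h=\Pi_h u-\tu_h\in V_h^\Gamma$, setting $v_h=\xi_h$ and splitting $u-\tu_h=(u-\Pi_h u)+\xi_h$ turns this into
\[
|\xi_h|^2_{H^1(\Omega\setminus\Gamma)}+\tD\frac h2\sum_{x_i\in\Gamma}[\xi_h]^2_{x_i}=(\star)+(\star\star)+(\star\star\star),
\]
where $(\star)=-\int_{\Omega\setminus\Gamma}\nabla(u-\Pi_h u)\cdotp\nabla\xi_h\dx$, $(\star\star)=\int_\Gamma(\nabla u\cdotp n_\Gamma)[\xi_h]\dS$ is the consistency error, and $(\star\star\star)=-\tD\frac h2\sum_{x_i}[u-\Pi_h u]_{x_i}[\xi_h]_{x_i}$.

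Second I would estimate the three terms, arranging each to hide a controlled fraction of the left-hand side. Term $(\star)$ is the standard interior estimate: Cauchy--Schwarz, Young's inequality \eqref{eq:Young} with $r=1$, and the maximum-angle interpolation bound (Lemma~\ref{lem:Max_angle_cond}) give $|(\star)|\le\tfrac12 C_I^2h^2|u|^2_{H^2(\Omega\setminus\Gamma)}+\tfrac12|\xi_h|^2_{H^1(\Omega\setminus\Gamma)}$. For $(\star\star\star)$, the discrete Cauchy--Schwarz inequality with weight $h/2$, Young's inequality with $r=\tfrac12$, and the interpolation estimate \eqref{lem:interp_error_gamma_est1} give $|(\star\star\star)|\le\tfrac{\tD}{64}Lh^4\|\gradt^2 u\|^2_{L^\infty(\Gamma)}+\tfrac{\tD}{4}\tfrac h2\sum_{x_i}[\xi_h]^2_{x_i}$. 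The delicate term is the consistency error $(\star\star)$: after Cauchy--Schwarz on $\Gamma$ and $\|\nabla u\cdotp n_\Gamma\|^2_{L^2(\Gamma)}\le L|u|^2_{\WIi(\Gamma)}$, I would invoke the fact — established just after Lemma~\ref{lem:quadrature} — that $[\xi_h]$ is piecewise linear on the partition $\{x_i\}$ and vanishes at the endpoints $x_0,x_{N+1}$, so Lemma~\ref{lem:quadrature} lets me dominate the \emph{integral} $\|[\xi_h]\|^2_{L^2(\Gamma)}$ by the \emph{quadrature sum} $\tfrac h2\sum_{x_i}[\xi_h]^2_{x_i}$ that appears on the left; Young's inequality with $r=\tD/2$ then yields $|(\star\star)|\le\tfrac1\tD L|u|^2_{\WIi(\Gamma)}+\tfrac{\tD}{4}\tfrac h2\sum_{x_i}[\xi_h]^2_{x_i}$. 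Collecting the three bounds, absorbing $\tfrac12|\xi_h|^2_{H^1(\Omega\setminus\Gamma)}$ and the two quarter-penalty terms into the left-hand side, and multiplying by $2$ gives
\[
|\xi_h|^2_{H^1(\Omega\setminus\Gamma)}+\tD\frac h2\sum_{x_i\in\Gamma}[\xi_h]^2_{x_i}\le C_I^2h^2|u|^2_{H^2(\Omega\setminus\Gamma)}+\frac2\tD L|u|^2_{\WIi(\Gamma)}+\frac{\tD}{32}Lh^4\|\gradt^2 u\|^2_{L^\infty(\Gamma)}.
\]

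Third I would pass from $\xi_h$ to the error $u-\tu_h$ via $u-\tu_h=(u-\Pi_h u)+\xi_h$ and $(a+b)^2\le 2a^2+2b^2$. On $\Omega\setminus\Gamma$ the interpolation contribution is controlled by Lemma~\ref{lem:Max_angle_cond} exactly as in \eqref{thm:FEM_modified:12}. On $\Gamma$, since $[u-\tu_h]=[u-\Pi_h u]+[\xi_h]$, I would bound $\int_\Gamma[u-\Pi_h u]^2\dS$ by \eqref{lem:interp_error_gamma_est2} and $\int_\Gamma[\xi_h]^2\dS$ by $\tfrac h2\sum_{x_i}[\xi_h]^2_{x_i}$ via Lemma~\ref{lem:quadrature} once more. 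Substituting the $\xi_h$-estimate from the second step and adding up the numerical constants — $2+2=4$ on the $h^2|u|^2_{H^2(\Omega\setminus\Gamma)}$ term, $4$ on the $\tD^{-1}L|u|^2_{\WIi(\Gamma)}$ term, and $\tfrac18+\tfrac1{16}=\tfrac3{16}$ on the $\tD Lh^4\|\gradt^2u\|^2_{L^\infty(\Gamma)}$ term — reproduces \eqref{thm:FEM_zero:est} exactly.

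The main obstacle is the consistency error $(\star\star)$. In Theorem~\ref{thm:FEM_modified}, Galerkin orthogonality annihilates terms of this kind; here it is genuinely present and, worse, it lives as an integral over $\Gamma$ whereas the only coercive quantity available for absorption on the left is a quadrature sum over the nodes $x_i$. Reconciling the two is precisely what the piecewise linearity of $[\xi_h]$ together with Lemma~\ref{lem:quadrature} accomplishes; without that structural observation the consistency error could not be dominated by the penalty. A secondary, purely bookkeeping, point is to partition the left-hand penalty term into the right fractions so that $(\star\star)$ and $(\star\star\star)$ can both be hidden while keeping the final constants as stated.
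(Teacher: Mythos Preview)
Your proposal is correct and follows essentially the same route as the paper's own proof: the same subtraction of \eqref{eq:FEM_zero} from \eqref{lem:cont_sol_eq}, the same three-term decomposition (your $(\star),(\star\star),(\star\star\star)$ are the paper's $(b),(a),(c)$), the same Young parameters, the same use of Lemma~\ref{lem:quadrature} to convert $\|[\xi_h]\|_{L^2(\Gamma)}^2$ into the quadrature sum, and the same passage to $u-\tu_h$ via \eqref{lem:interp_error_gamma_est2} and Lemma~\ref{lem:quadrature} again. The constant bookkeeping you track ($4$, $4$, and $\tfrac18+\tfrac1{16}=\tfrac3{16}$) matches the paper exactly.
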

\begin{proof}
We subtract (\ref{lem:cont_sol_eq}) and (\ref{eq:FEM_zero}) with a test function $v_h\in V_h^\Gamma$:
\begin{equation}
\label{thm:FEM_zero:1}
\int_{\Omega\setminus\Gamma}(\nabla u-\nabla\tu_h)\cdotp\nabla v_h \dx -\int_\Gamma\nabla u\cdotp n_\Gamma[v_h]\dS +\tD\frac{h}{2}\sum_{x_i\in\Gamma}[u-\tu_h]_{x_i}[v_h]_{x_i} =0.
\end{equation}
We define $\xi_h=\Pi_h u-\tu_h\in V_h$ and set $v_h=\xi_h$ in (\ref{thm:FEM_zero:1}). Rearranging gives us
\begin{equation}
\label{thm:FEM_zero:3}
\begin{split}
&\int_{\Omega\setminus\Gamma} |\nabla \xi_h|^2\dx +\tD\frac{h}{2}\sum_{x_i\in\Gamma}[\xi_h]_{x_i}^2\\
&= \underbrace{\int_\Gamma\nabla u\cdotp n_\Gamma[\xi_h]\dS}_{(a)}  -\underbrace{\int_{\Omega\setminus\Gamma}(\nabla u-\nabla\Pi_h u)\cdotp\nabla \xi_h \dx}_{(b)} -\underbrace{\tD\frac{h}{2}\sum_{x_i\in\Gamma}[u-\Pi_h u]_{x_i}[\xi_h]_{x_i}}_{(c)}.
\end{split}
\end{equation}
Now we estimate the individual right-hand side terms.

Term $(a)$: Young's inequality (\ref{eq:Young}) with $r=\tD/2$ gives us
\begin{equation}
\label{thm:FEM_zero:4}
(a)\leq \frac{1}{\tD}\int_\Gamma|\nabla u|^2\dS +\frac{\tD}{4}\int_\Gamma[\xi_h]^2\dS \leq \frac{1}{\tD}L\|\nabla u\|^2_{L^\infty(\Gamma)} +\tD\frac{h}{8}\sum_{x_i\in\Gamma}[\xi_h]_{x_i}^2,
\end{equation}
where we have used Lemma \ref{lem:quadrature} with $f=[\xi_h]$. 

Term $(b)$: Young's inequality (\ref{eq:Young}) with $r=1$ and Lemma \ref{lem:Max_angle_cond} give us
\begin{equation}
\begin{split}
\label{thm:FEM_zero:4a}
(b)&\leq \frac{1}{2}\int_{\Omega\setminus\Gamma}|\nabla u-\nabla\Pi_h u|^2 \dx +\frac{1}{2}\int_{\Omega\setminus\Gamma}|\nabla \xi_h|^2 \dx\\ &\leq \frac{1}{2}C_I^2h^2|u|^2_{H^2(\Omega\setminus\Gamma)} +\frac{1}{2}|\xi_h|_{H^1(\Omega\setminus\Gamma)}^2.
\end{split}
\end{equation}

Term $(c)$: Young's inequality (\ref{eq:Young}) with $r=1/2$ along with Lemma \ref{lem:interp_error_gamma} give us
\begin{equation}
\begin{split}
\label{thm:FEM_zero:5}
(c)&\leq \tD\frac{h}{2}\sum_{x_i\in\Gamma}[u-\Pi_h u]_{x_i}^2 +\tD\frac{h}{8}\sum_{x_i\in\Gamma}[\xi_h]_{x_i}^2\\ 
& \leq\tD\frac{1}{64}L h^4 \|\gradt^2 u\|_{L^\infty(\Gamma)}^2 +\tD\frac{h}{8}\sum_{x_i\in\Gamma}[\xi_h]_{x_i}^2.
\end{split}
\end{equation}
Applying the last three estimates to (\ref{thm:FEM_zero:3}) gives us
\begin{equation}
\label{thm:FEM_zero:6}
\begin{split}
&\frac{1}{2} |\xi_h|^2_{H^1(\Omega\setminus\Gamma)} +\tD\frac{h}{4}\sum_{x_i\in\Gamma}[\xi_h]_{x_i}^2\\
&\leq \frac{1}{\tD}L\|\nabla u\|^2_{L^\infty(\Gamma)}  +\frac{1}{2}C_I^2h^2|u|^2_{H^2(\Omega\setminus\Gamma)} +\tD\frac{1}{64}L h^4 \|\gradt^2 u\|_{L^\infty(\Gamma)}^2.
\end{split}
\end{equation}

Now we estimate the error $u-\tu_h$ similarly as in (\ref{thm:FEM_modified:11}) using the facts that $u-\tu_h =(u-\Pi_h u)+\xi_h$ and $(a+b)^2\leq 2a^2+2b^2$:
\begin{equation}
\begin{split}
\label{thm:FEM_zero:7}
&|u-\tu_h|^2_{H^1(\Omega\setminus\Gamma)} +\tD\int_{\Gamma}[u-\tu_h]^2\dS =|u-\tu_h|^2_{H^1(\Omega\setminus\Gamma)} +\tD\|[u-\tu_h]\|_{L^2(\Gamma)}^2\\ &\leq \big(|u-\Pi_h u|_{H^1(\Omega\setminus\Gamma)} + |\xi_h|_{H^1(\Omega\setminus\Gamma)}\big)^2 +\tD\big(\|[u-\Pi_h u]\|_{L^2(\Gamma)}+\|[\xi_h]\|_{L^2(\Gamma)}\big)^2\\
&\leq 2|u-\Pi_h u|_{H^1(\Omega\setminus\Gamma)}^2 + 2|\xi_h|_{H^1(\Omega\setminus\Gamma)}^2 +2\tD\|[u-\Pi_h u]\|_{L^2(\Gamma)}^2 +\tD h\sum_{x_i\in\Gamma}[\xi_h]_{x_i}^2,
\end{split}
\end{equation}
where we have used inequality (\ref{lem:quadrature_est}) with $f=[\xi_h]$. The $\xi_h$-terms in (\ref{thm:FEM_zero:7}) are estimated using (\ref{thm:FEM_zero:6}). The remaining terms are standard:
\begin{equation}
\label{thm:FEM_zero:8}
|u-\Pi_h u|_{H^1(\Omega\setminus\Gamma)}^2\leq C_I^2h^2|u|^2_{H^2(\Omega\setminus\Gamma)},
\end{equation}
and for the third right-hand side term in (\ref{thm:FEM_zero:7}), we use (\ref{lem:interp_error_gamma_est2}). Altogether, we get from (\ref{thm:FEM_zero:7}) 
\begin{equation}
\begin{split}
\label{thm:FEM_zero:9}
&|u-\tu_h|^2_{H^1(\Omega\setminus\Gamma)} +\tD\frac{h}{2}\sum_{x_i\in\Gamma}[u-\tu_h]_{x_i}^2\\ 
& \leq 4 C_I^2h^2|u|^2_{H^2(\Omega\setminus\Gamma)} +\frac{4}{\tD}L\|\nabla u\|^2_{L^\infty(\Gamma)}  +\tD\frac{3}{16}L h^4 \|\gradt^2 u\|_{L^\infty(\Gamma)}^2,
\end{split}
\end{equation}
which is our desired estimate.
\end{proof}

Similarly as in Theorem \ref{thm:FEM_modified_band_opt}, we can now  optimize the value of $\tD$ in order to minimize the right-hand side of (\ref{thm:FEM_zero:est}).

\begin{theorem}
\label{thm:FEM_zero_opt}
Let $u\in \WIIi(\Omega)$. Let $\Th$ contain a zero-measure band $\Gamma$ of length $L$. In (\ref{eq:FEM_zero}) set 
\begin{equation}
\label{thm:FEM_zero_opt:D}
\tD=\frac{1}{h^2}\frac{8}{\sqrt{3}}\frac{|u|_{\WIi(\Gamma)}}{\|\gradt^2 u\|_{L^\infty(\Gamma)}},\end{equation}
then the tempered FEM solution $\tu_h$ satisfies
\begin{equation}
\begin{split}
\label{thm:FEM_zero_opt:est}
&|u-\tu_h|_{H^1(\Omega\setminus\Gamma)} +\sqrt{\tD}\big\|[u-\tu_h]\big\|_{L^2(\Gamma)}\leq C(u,L)h,
\end{split}
\end{equation}
where
\begin{equation}
\begin{split}
\label{thm:FEM_zero_opt:est2}
C(u,L)=2\sqrt{2}C_I|u|_{H^2(\Omega\setminus\B)} + \sqrt{2}\sqrt[4]{3}\sqrt{L} \sqrt{|u|_{\WIi(\Gamma)}\|\gradt^2 u\|_{L^\infty(\Gamma)}}
\end{split}
\end{equation}
is a constant independent of $h$.
\end{theorem}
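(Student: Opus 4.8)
The plan is to mimic exactly the structure of the proof of Theorem \ref{thm:FEM_modified_band_opt}, replacing the role of the band-integral terms by the quadrature/jump terms coming from the zero-measure analysis. The starting point is the estimate \eqref{thm:FEM_zero:est} of Theorem \ref{thm:FEM_zero}, whose last two right-hand side terms depend on $\tD$ in exactly the form handled by Lemma \ref{lem:opt}. So first I would isolate
\begin{equation*}
g(\tD)=\frac{4}{\tD}L|u|_{\WIi(\Gamma)}^2+\tD\,\frac{3}{16}Lh^4\|\gradt^2 u\|_{L^\infty(\Gamma)}^2,
\end{equation*}
identify $a=4L|u|_{\WIi(\Gamma)}^2$ and $b=\frac{3}{16}Lh^4\|\gradt^2 u\|_{L^\infty(\Gamma)}^2$, and apply Lemma \ref{lem:opt} to get $\tD_\opt=\sqrt{a/b}$. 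A direct simplification gives $\tD_\opt=\frac{1}{h^2}\frac{8}{\sqrt 3}\frac{|u|_{\WIi(\Gamma)}}{\|\gradt^2 u\|_{L^\infty(\Gamma)}}$, which is precisely \eqref{thm:FEM_zero_opt:D}, and the minimal value is $g(\tD_\opt)=2\sqrt{ab}=2\sqrt{\frac{3}{4}}\,Lh^2|u|_{\WIi(\Gamma)}\|\gradt^2 u\|_{L^\infty(\Gamma)}=\sqrt{3}\,Lh^2|u|_{\WIi(\Gamma)}\|\gradt^2 u\|_{L^\infty(\Gamma)}$.

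Next I would substitute this optimal value back into \eqref{thm:FEM_zero:est}, obtaining
\begin{equation*}
|u-\tu_h|^2_{H^1(\Omega\setminus\Gamma)}+\tD\int_\Gamma[u-\tu_h]^2\dS\leq 4C_I^2h^2|u|^2_{H^2(\Omega\setminus\Gamma)}+\sqrt{3}\,Lh^2|u|_{\WIi(\Gamma)}\|\gradt^2 u\|_{L^\infty(\Gamma)}.
\end{equation*}
Then, exactly as in \eqref{thm:FEM_modified_band_opt:5}, I use the elementary implication $a^2+b^2\le c^2+d^2 \Rightarrow a+b\le\sqrt{2}(c+d)$ for nonnegative reals (valid since $\sqrt{x+y}\le\sqrt x+\sqrt y$), applied with $a=|u-\tu_h|_{H^1(\Omega\setminus\Gamma)}$, $b=\sqrt{\tD}\|[u-\tu_h]\|_{L^2(\Gamma)}$, $c^2=4C_I^2h^2|u|^2_{H^2(\Omega\setminus\Gamma)}$ and $d^2=\sqrt3 Lh^2|u|_{\WIi(\Gamma)}\|\gradt^2 u\|_{L^\infty(\Gamma)}$. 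Taking square roots term by term yields $c=2C_Ih|u|_{H^2(\Omega\setminus\Gamma)}$ and $d=\sqrt[4]{3}\sqrt{L}\,h\sqrt{|u|_{\WIi(\Gamma)}\|\gradt^2 u\|_{L^\infty(\Gamma)}}$, so the right-hand side becomes $\sqrt2\big(2C_Ih|u|_{H^2(\Omega\setminus\Gamma)}+\sqrt[4]{3}\sqrt L\,h\sqrt{|u|_{\WIi(\Gamma)}\|\gradt^2 u\|_{L^\infty(\Gamma)}}\big)=C(u,L)h$ with $C(u,L)$ as in \eqref{thm:FEM_zero_opt:est2}. This is the claimed estimate \eqref{thm:FEM_zero_opt:est}, and $C(u,L)$ is manifestly independent of $h$ since $L$, $C_I$ and the Sobolev norms of $u$ are.

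There is essentially no serious obstacle here: the theorem is a clean corollary of Theorem \ref{thm:FEM_zero} plus the optimization Lemma \ref{lem:opt}, so the only things to be careful about are bookkeeping of constants (the $\frac{3}{16}$ and the $\frac{8}{\sqrt3}$, making sure the $2\sqrt{ab}$ collapses to $\sqrt3 L h^2(\cdots)$ and then its square root gives the $\sqrt[4]{3}$ factor) and the direction of the norm-equivalence inequality. One mild point worth a remark, as in Theorem \ref{thm:FEM_modified_band_opt}, is that the optimal $\tD$ should really be thought of as $\min(\tD_\opt,\infty)$ — there is no upper constraint analogous to $D\le1$ in the zero-measure penalization setting, so no truncation is needed; if anything one only needs $\tD>0$, which $\tD_\opt$ satisfies as long as $|u|_{\WIi(\Gamma)}>0$ (the degenerate case $|u|_{\WIi(\Gamma)}=0$ being trivial since then $u$ is constant on $\Gamma$). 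Finally I would note, in passing, that $\tD_\opt\sim h^{-2}$ translates to $\Jmin\sim h^3$ in the implementation, consistent with \eqref{eq:Jminopt} and the numerical experiments.
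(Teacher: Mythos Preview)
Your proposal is correct and follows essentially the same approach as the paper's own proof: isolate the $\tD$-dependent terms in \eqref{thm:FEM_zero:est}, apply Lemma \ref{lem:opt} to obtain $\tD_\opt$ and $g(\tD_\opt)=\sqrt{3}Lh^2|u|_{\WIi(\Gamma)}\|\gradt^2 u\|_{L^\infty(\Gamma)}$, substitute back, and pass to the square-root form via the norm-equivalence implication \eqref{thm:FEM_modified_band_opt:5}. Your constant bookkeeping is accurate, and your closing remarks on the absence of a $\tD\le 1$ constraint and the correspondence $\tD_\opt\sim h^{-2}\Leftrightarrow \Jmin\sim h^3$ are apt additions not spelled out in the paper's proof.
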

\begin{proof}
We optimize the last two terms in (\ref{thm:FEM_zero:est}) with respect to $\tD$. To this end, we denote them as
\begin{equation}
\label{thm:FEM_zero_opt:1}
g(\tD)= \frac{4}{\tD}L |u|_{\WIi(\Gamma)}^2 +\frac{3}{16}\tD L h^4 \|\gradt^2 u\|^2_{L^\infty(\Gamma)}.
\end{equation}
The application of Lemma \ref{lem:opt} gives us the optimal choice of $\tD$:
\begin{equation}
\label{thm:FEM_zero_opt:2}
\tD_\opt=\sqrt{\frac{4L |u|_{\WIi(\Gamma)}^2}{\tfrac{3}{16} L h^4 \|\gradt^2 u\|^2_{L^\infty(\Gamma)}}} =\frac{8}{\sqrt{3}}\frac{1}{h^2}\frac{|u|_{\WIi(\Gamma)}}{\|\gradt^2 u\|_{L^\infty(\Gamma)}},
\end{equation}
which is (\ref{thm:FEM_zero_opt:D}). Moreover,
\begin{equation}
\begin{split}
\label{thm:FEM_zero_opt:3}
g(\tD_\opt)&= 2\sqrt{4L |u|_{\WIi(\Gamma)}^2  \tfrac{3}{16} L h^4 \|\gradt^2 u\|^2_{L^\infty(\Gamma)}}\\ &=\sqrt{3}L h^2 |u|_{\WIi(\Gamma)}\|\gradt^2 u\|_{L^\infty(\Gamma)}.
\end{split}
\end{equation}
With the choice $\tD=\tD_\opt$, estimate (\ref{thm:FEM_zero:est}) therefore becomes
\begin{equation}
\begin{split}
\label{thm:FEM_zero_opt:4}
&|u-\tu_h|^2_{H^1(\Omega\setminus\Gamma)} +\tD\|[u-\tu_h]\|_{L^2(\Gamma)}^2 \\
&\quad\leq 4C_I^2h^2|u|^2_{H^2(\Omega\setminus\Gamma)}  +\sqrt{3}L h^2 |u|_{\WIi(\Gamma)}\|\gradt^2 u\|_{L^\infty(\Gamma)}.
\end{split}
\end{equation}
We take the square root of (\ref{thm:FEM_zero_opt:4}) by using (\ref{thm:FEM_modified_band_opt:5}):
\begin{equation}
\begin{split}
\label{thm:FEM_zero_opt:6}
&|u-\tu_h|_{H^1(\Omega\setminus\Gamma)} +\sqrt{\tD}\big\|[u-\tu_h]\big\|_{L^2(\Gamma)}\\
&\quad\leq 2\sqrt{2}C_Ih|u|_{H^2(\Omega\setminus\Gamma)}  +\sqrt{2}\sqrt[4]{3}\sqrt{L} h \sqrt{|u|_{\WIi(\Gamma)}\|\gradt^2 u\|_{L^\infty(\Gamma)}}.
\end{split}
\end{equation}
This is the desired estimate (\ref{thm:FEM_zero_opt:est}) with the constant $C(u,L)$ defined by (\ref{thm:FEM_zero_opt:est2}).
\end{proof}

We could also state Theorem \ref{thm:FEM_zero_opt} in the `quadrature form':

\begin{theorem}
\label{thm:FEM_zero_opt2}
Let $u\in \WIIi(\Omega)$. Let $\Th$ contain a zero-measure band $\Gamma$ of length $L$. In (\ref{eq:FEM_zero}) set 
\begin{equation}
\label{thm:FEM_zero_opt2:D}
\tD=\frac{1}{h^2}\frac{|u|_{\WIi(\Gamma)}}{\|\gradt^2 u\|_{L^\infty(\Gamma)}}8\sqrt{\frac{2}{3}}\end{equation}
then the tempered FEM solution $\tu_h$ satisfies
\begin{equation}
\begin{split}
\label{thm:FEM_zero_opt2:est}
&|u-\tu_h|_{H^1(\Omega\setminus\Gamma)} +\bigg(\tD\frac{h}{2}\sum_{x_i\in\Gamma}[u-\tu_h]_{x_i}^2 \bigg)^{1/2}\leq C(u,L)h,
\end{split}
\end{equation}
where
\begin{equation}
\begin{split}
\label{thm:FEM_zero_opt2:est2}
C(u,L)=2\sqrt{2}C_I|u|_{H^2(\Omega\setminus\B)} + \sqrt[4]{6}\sqrt{L} \sqrt{|u|_{\WIi(\Gamma)}\|\gradt^2 u\|_{L^\infty(\Gamma)}}
\end{split}
\end{equation}
is a constant independent of $h$.
\end{theorem}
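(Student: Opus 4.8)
The plan is to follow essentially the same route as the proof of Theorem~\ref{thm:FEM_zero_opt}, the only difference being that the quadrature sum $\tfrac h2\sum_{x_i\in\Gamma}[\,\cdot\,]_{x_i}^2$ is carried through the whole argument instead of being converted to the $L^2(\Gamma)$-norm via Lemma~\ref{lem:quadrature}. First I would establish a ``quadrature-form'' analogue of Theorem~\ref{thm:FEM_zero}, and then optimize the free parameter $\tD$ with Lemma~\ref{lem:opt}.

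For the quadrature-form estimate I would start from inequality (\ref{thm:FEM_zero:6}) in the proof of Theorem~\ref{thm:FEM_zero}, which already controls $\tfrac12|\xi_h|^2_{H^1(\Omega\setminus\Gamma)}+\tD\tfrac h4\sum_{x_i\in\Gamma}[\xi_h]_{x_i}^2$, where $\xi_h=\Pi_h u-\tu_h$. Writing $u-\tu_h=(u-\Pi_h u)+\xi_h$, splitting the quadrature sum node-by-node with $(a+b)^2\le 2a^2+2b^2$, and bounding the $\xi_h$-contribution $2|\xi_h|^2_{H^1(\Omega\setminus\Gamma)}+\tD h\sum_{x_i\in\Gamma}[\xi_h]_{x_i}^2$ by four times the right-hand side of (\ref{thm:FEM_zero:6}), I am left with the interpolation terms $2|u-\Pi_h u|^2_{H^1(\Omega\setminus\Gamma)}$ and $\tD h\sum_{x_i\in\Gamma}[u-\Pi_h u]_{x_i}^2$. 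The first is handled by the maximum-angle interpolation bound (Lemma~\ref{lem:Max_angle_cond}) since $\Omega\setminus\Gamma$ has only ``nice'' elements, and the second by the summed interpolation estimate (\ref{lem:interp_error_gamma_est1}) of Lemma~\ref{lem:interp_error_gamma}. Collecting terms yields
\begin{equation*}
|u-\tu_h|^2_{H^1(\Omega\setminus\Gamma)}+\tD\frac h2\sum_{x_i\in\Gamma}[u-\tu_h]_{x_i}^2
\le 4C_I^2h^2|u|^2_{H^2(\Omega\setminus\Gamma)}+\frac4{\tD}L|u|_{\WIi(\Gamma)}^2+\tD\frac3{32}Lh^4\|\gradt^2 u\|^2_{L^\infty(\Gamma)}.
\end{equation*}
Note that Lemma~\ref{lem:quadrature} is not needed in this derivation, which is precisely why the coefficient of the last term is $3/32$ rather than the $3/16$ that appears in Theorem~\ref{thm:FEM_zero}.

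Applying Lemma~\ref{lem:opt} with $a=4L|u|_{\WIi(\Gamma)}^2$ and $b=\tfrac3{32}Lh^4\|\gradt^2 u\|^2_{L^\infty(\Gamma)}$ then gives $\tD_\opt=\sqrt{a/b}=\tfrac8{h^2}\sqrt{\tfrac23}\,\tfrac{|u|_{\WIi(\Gamma)}}{\|\gradt^2 u\|_{L^\infty(\Gamma)}}$, which is exactly (\ref{thm:FEM_zero_opt2:D}), together with $g(\tD_\opt)=2\sqrt{ab}=\sqrt{\tfrac32}\,Lh^2|u|_{\WIi(\Gamma)}\|\gradt^2 u\|_{L^\infty(\Gamma)}$. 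Substituting this choice into the displayed estimate, and then taking the square root via the elementary implication (\ref{thm:FEM_modified_band_opt:5}) exactly as in the proof of Theorem~\ref{thm:FEM_modified_band_opt}, produces the two constants $\sqrt2\cdot 2C_I=2\sqrt2\,C_I$ and $\sqrt2\cdot\sqrt[4]{3/2}=\sqrt[4]{6}$, i.e. precisely the bound (\ref{thm:FEM_zero_opt2:est}) with $C(u,L)$ as in (\ref{thm:FEM_zero_opt2:est2}). Since $\tD$ is only required to be positive and is not bounded above, no truncation of $\tD_\opt$ is needed here, unlike the $D\le 1$ constraint in Theorem~\ref{thm:FEM_modified_band_opt}.

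This argument is entirely routine given the groundwork already in place, and I do not expect a genuinely hard step. The only points requiring attention are (i) using the \emph{summed} interpolation bound (\ref{lem:interp_error_gamma_est1}) rather than its integral counterpart (\ref{lem:interp_error_gamma_est2}), and (ii) recalling that $[\xi_h]$ and $[u-\Pi_h u]$ are piecewise linear on the partition $\{x_i\}$ of $\Gamma$, so that Lemma~\ref{lem:interp_error_gamma} applies to them --- but this has already been justified in the discussion preceding Theorem~\ref{thm:FEM_zero}. Careful bookkeeping of the numerical constants is all that remains.
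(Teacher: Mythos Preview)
Your proposal is correct and follows exactly the route the paper indicates: repeat the proofs of Theorems~\ref{thm:FEM_zero} and~\ref{thm:FEM_zero_opt}, keeping the quadrature sum $\tfrac{h}{2}\sum_{x_i}[\,\cdot\,]_{x_i}^2$ throughout and invoking estimate~(\ref{lem:interp_error_gamma_est1}) in place of~(\ref{lem:interp_error_gamma_est2}) at the step analogous to~(\ref{thm:FEM_zero:7}). Your bookkeeping of the constants ($3/32$, $\tD_\opt=8\sqrt{2/3}\,h^{-2}\cdots$, and $\sqrt{2}\cdot(3/2)^{1/4}=\sqrt[4]{6}$) is accurate.
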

\begin{proof}
The proof follows exactly the proofs of Theorems \ref{thm:FEM_zero} and \ref{thm:FEM_zero_opt}, the only difference being the application of (\ref{lem:interp_error_gamma_est1}) instead of (\ref{lem:interp_error_gamma_est2}) in the corresponding version of (\ref{thm:FEM_zero:7}).
\end{proof}
\vspace{0.5cm}

\section*{Declaration of generative AI and AI-assisted technologies in the writing process}
During the preparation of this work the author(s) used DeepL in order to translate and improve the readability and language of the manuscript. After using this tool/service, the author(s) reviewed and edited the content as needed and
take(s) full responsibility for the content of the published article. \\ \\
During the preparation of this work the author(s) used ChatGPT in order to improve the readability and language of the manuscript. After using this tool/service, the author(s) reviewed and edited the content as needed and
take(s) full responsibility for the content of the published article.
\bibliographystyle{abbrv}
\bibliography{Notes} 

 \end{document}